\newcommand{\alert}[1]{{\color{red}#1}}
\newtheorem{thm}{Theorem}[section]
\newtheorem{theorem}[thm]{Theorem}
\newtheorem{lemma}[thm]{Lemma}
\newtheorem{proposition}[thm]{Proposition}
\theoremstyle{definition}
\newtheorem{free text}[thm]{}
\newtheorem{definition}[thm]{Definition}
\newtheorem{example}[thm]{Example}
\newtheorem{examples}[thm]{Examples}
\newtheorem{observation}[thm]{Observation}
\newtheorem{observations}[thm]{Observations}
\newtheorem{remark}[thm]{Remark}
\newtheorem{remarks}[thm]{Remarks}
\newcommand{\N} {\mathbb{N}}
\newcommand{\Z} {\mathbb{Z}}
\newcommand{\R} {\mathbb{R}}
\newcommand{\C} {\mathbb{C}}
\newcommand{\A} {\mathbb{A}}
\newcommand{\bP} {\mathbf{P}}
\newcommand{\bK} {\mathbf{K}}
\newcommand{\bk} {\Bbbk}
\newcommand{\cA} {\mathcal{A}}
\newcommand{\cF} {\mathcal{F}}
\newcommand{\cH} {\mathcal{H}}
\newcommand{\cJ} {\mathcal{J}}
\newcommand{\cK} {\mathcal{K}}
\newcommand{\cL} {\mathcal{L}}
\newcommand{\cO} {\mathcal{O}}
\newcommand{\cX} {\mathcal{X}}
\newcommand{\cY} {\mathcal{Y}}
\newcommand{\cB} {\mathcal{B}}
\newcommand{\fg} {\mathfrak{g}}
\newcommand{\fh} {\mathfrak{h}}
\newcommand{\bU}{\mathbf{U}}
\newcommand{\rGL}{\mathrm{GL}}
\newcommand{\bGL}{\mathbf{GL}}
\newcommand{\bG} {\mathbf{G}}
\newcommand{\zero} {{\bar{0}}}
\newcommand{\uno} {{\bar{1}}}
\newcommand{\one} {{\bar{1}}}
\newcommand{\salggrc} {\mathrm{(salg)^{\mathrm{gr}}_\C}}
\newcommand{\smod} {\mathrm{(smod)}}
\newcommand{\smodstc} {\mathrm{(smod)^{\mathrm{st}}_\C}}
\newcommand{\smodgrc} {\mathrm{(smod)^{\mathrm{gr}}_\C}}
\newcommand{\salg} {\mathrm{(salg)}}
\newcommand{\grps} {\mathrm{(grps)}}
\newcommand{\sgrps} {\mathrm{(sgrps)}}
\newcommand{\sHCp} {\mathrm{(sHCp)}}
\newcommand{\Lie} {\mathrm{Lie}}
\newcommand{\ad} {\mathrm{ad}}
\newcommand{\gr} {\mathrm{gr}}
\newcommand{\st} {\mathrm{st}}
\newcommand{\id} {\mathrm{id}}
\newcommand{\sLie} {\mathrm{(sLie)}}
\newcommand{\sLiestc} {\mathrm{(sLie)^{\mathrm{st}}_\C}}
\newcommand{\sLiegrc} {\mathrm{(sLie)^{\mathrm{gr}}_\C}}
\newcommand{\Hom}{{\mathrm{Hom}\,}}
\newcommand{\lra}{\longrightarrow}
\newcommand{\aut}{\mathrm{aut}}
\newcommand{\baut}{\mathrm{\overline{aut}}}
\newcommand{\beq}{\begin{equation}}
\newcommand{\eeq}{\end{equation}}
\newcommand{\fk}{\mathfrak {k}}
\newcommand{\fsl}{\mathfrak {sl}}
\newcommand{\fp}{\mathfrak {p}}
\newcommand{\fu}{\mathfrak {u}}
\newcommand{\fgl}{\mathfrak {gl}}
\newcommand{\bbar}{\overline{b}}
\newcommand{\wbar}{\overline{w}}
\newcommand{\xbar}{\overline{x}}
\newcommand{\ybar}{\overline{y}}
\newcommand{\zbar}{\overline{z}}
\newcommand{\zetabar}{\overline{\zeta}}
\newcommand{\Abar}{\overline{A}}
\newcommand{\Bbar}{\overline{B}}
\newcommand{\Cbar}{\overline{C}}
\newcommand{\re}{\mathrm{re}}
\newcommand{\be}{\beta}
\begin{document}

{\ }

\vskip-51pt

   \centerline{\small  \textsl{Communications in Mathematical Physics\/}  \textbf{397}
(2023), no.\ 2, 937--965   \ \ \ --- \ \ \  \textbf{DOI:}  10.1007/s00220-022-04502-x}
 \vskip3pt
   \centerline{\small --- preprint  {\tt arXiv:2003.10535} [math.RA] (2020) --- }
 \vskip1pt
   \centerline{\small  \textsl{The original publication is available at\/}
\  \texttt{https://link.springer.com/article/10.1007/s00220-022-04502-x}}

\vskip27pt   {\ }

\centerline{\Large \bf REAL FORMS of COMPLEX LIE}
 \vskip11pt
\centerline{\Large \bf SUPERALGEBRAS and SUPERGROUPS}

\vskip45pt

\centerline{Rita FIORESI$^\flat$ \ , \ \  Fabio GAVARINI$^\#$}

\bigskip

\centerline{\it $^\flat$ Dipartimento di Matematica, Universit\`a
di Bologna } \centerline{\it piazza di Porta San Donato, 5  ---
I-40127 Bologna, Italy} \centerline{{\footnotesize e-mail:
rita.fioresi@unibo.it}}

\bigskip

\centerline{\it $^\#$ Dipartimento di Matematica, Universit\`a di
Roma ``Tor Vergata'' } \centerline{\it via della ricerca
scientifica 1  --- I-00133 Roma, Italy}

\centerline{{\footnotesize e-mail: gavarini@mat.uniroma2.it}}

\vskip35pt

\begin{abstract}
   We investigate the notion of  {\sl real form\/}  of complex Lie superalgebras and supergroups, both in the  {\sl standard\/} and {\sl graded\/}  version.  Our functorial approach allows most naturally to go from the superalgebra to the supergroup and retrieve the real forms as fixed points, as in the ordinary setting.  We also introduce a more general notion of compact real form for Lie superalgebras and supergroups, and we prove some existence results for Lie superalgebras that are simple contragredient and their associated connected simply connected supergroups.
\end{abstract}

\vskip35pt
%
%

\section{Introduction}   \label{intro}

   The study of real forms of complex contragredient Lie superalgebras was initiated by V.\ G.\ Kac in his foundational work  \cite{kac}  and then carried out by M.\ Parker in  \cite{parker}
and  V.\ Serganova in  \cite{serganova},  where also symmetric superspaces were accounted for. Later on, Chuah in  \cite{chuah-mz}  gave another thorough classification of such real forms using Vogan diagrams and Cartan automorphisms.  In fact, as it happens for the ordinary setting, we have a one to one correspondence between real structures on a contragredient Lie superalgebra  $ \fg \, $,  and its Cartan automorphisms  $ \aut_{2,4}(\fg) \, $,  that is automorphisms that are involutions on the even part and whose square is the identity on the odd part of  $ \fg \, $.  This translates to a bijection between the antilinear involutions  $ \baut_{2,2}(\fg) $  of  $ \fg $  and the linear automorphisms  $ \aut_{2,4}(\fg) \, $.  In the ordinary setting, that is for $ \, \fg = \fg_\zero \, $,  this correspondence is explicitly obtained through the Cartan antiinvolution  $ \omega_\zero \, $,  whose fixed points give the compact form of  $ \fg_\zero \, $.  In the supersetting, as we shall see, such antiinvolution is replaced by an antilinear automorphism  $ \, \omega \in \baut_{2,4}(\fg) \, $.  This prompts for a more general treatment of real structures and real forms of superspaces and superalgebras, together with their global versions, where we consider both cases  $ \, \baut_{2,s}(\fg) \, $  and  $ \, \aut_{2,s}(\fg) \, $,  for $ \, s = 2, 4 \, $.  We shall refer to such real structures and real forms as  {\sl standard\/}  and  {\sl graded\/};  they were introduced in  \cite{pe},  \cite{serganova}.

\medskip

   The paper is organized as follows.  Sec.\ 2 contains preliminaries that help to establish our notation.  In Sec.\ 3, we begin by defining the notion of standard and graded real structure on a superspace  $ V $  as a pair  $ (V,\phi) $  with  $ \, \phi \in \baut_{2,2}(\fg) \, $  or  $ \, \baut_{2,4}(\fg) \, $,  respectively.  We obtain two categories,  $ \smod_\C^\st $  and  $ \, \smod_\C^\gr \, $,  that we compactly denote  $ \smod_\C^\bullet \, $  whenever there is no need to remark the difference; similarly, we define the corresponding categories of superalgebras  $ \salg_\C^\bullet \, $.  As expected, given a real structure, the associated real form is given by the fixed points of the antiautomorphism, however in the graded case, the functorial point of view is most fruitful, because such points cannot be seen over the complex field.  After establishing the terminology and definitions, we can then give naturally the notion of real structures and real forms of Lie superalgebras, following and extending the work  \cite{pe}. These real structures and real forms do integrate: thus, in Sec.\ 4, we obtain the category of complex supergroups with standard or graded real structures, that we denote with  $ \sgrps_\C^\st $  and  $ \sgrps_\C^\gr \, $,  or more compactly  $ \sgrps_\C^\bullet \, $.  We also briefly discuss the super Harish-Chandra pairs (sHCp) approach in this context (see
also  \cite{cfk1, ga1, mas}).  Our main result for this part is the following
(see Theorem \ref{real-form_G}).
\medskip

%
%
 {\bf Theorem A.}
  {\it If  $ \, \big(\bG,\Phi\big) \in \sgrps_\C^\bullet \, $,  the real form  $ \, \bG^\Phi $  of  $ \, \bG \, $,  given by the fixed points of  $ \, \Phi \, $,  is
%
%
  $$  \bG^\Phi(A)  \; = \;  \Big( G_+^{\Phi_+} \! \times \mathbb{A}_{\bullet,\C}^{0\,|d_1} \Big)(A) \;\;\; ,   \qquad   \forall \;\; A \in \salg_\C^\bullet  $$
%
%
 where  $ \, G_+^{\Phi_+} $  is the ordinary underlying real form of  $ G_+ $  and  $ \, \mathbb{A}_{\bullet,\C}^{0\,|d_1} \, $  is a real form of a purely odd affine superspace.  In particular, the supergroup functor  $ \, \bG^\Phi $  is representable.}
%
%

\medskip

   In the remaining part of the paper, we discuss compact real forms of contragredient complex Lie superalgebras and the corresponding supergroups, using the results detailed above.
                                                  \par
   In the ordinary setting, a real Lie algebra is compact if it is embedded into some orthogonal or equivalently unitary Lie algebra.  For a Lie superalgebra  $ \fg \, $,  many authors (see \cite{chuah-mz},  \cite{cfv},  \cite{cf})  replace this notion with the requirement that $ \, \fg = \fg_0 \, $  and the latter compact.  We take a more general approach, allowing  $ \fg $  to have odd elements.  For this reason, in Sec.\ 5, we need to examine super Hermitian forms, in the standard and graded context, and the corresponding unitary Lie superalgebras.  In our Subsec.\ 5.4, we retrieve in our language the physicists' definition of unitary Lie superalgebra (see  \cite{vsv}  and references therein), but also a graded version of it, obtained as fixed points of the  {\sl superadjoint}   --- that is, the supertranspose complex conjugate.  We regard this example very significant and natural, since it is obtained via an antilinear morphism in  $ \, \baut_{2,4}(\fgl(m|n)) \, $,  which has a categorical motivation  (see \cite{fl},  Ch.\ 1, and also  \cite{serganova, pe}).
                                                  \par
   In Sec.\ 6, we formulate our notion of  {\sl compact\/}  Lie superalgebra as one admitting an embedding into a unitary Lie superalgebra for a suitable positive definite super Hermitian form. We shall call this {\it super-compact}.  Then, we are finally able to introduce  $ \, \omega \in \baut_{2,4}(\fg) \, $,  generalizing the Cartan antiinvolution  $ \omega_\zero $  mentioned above, and to prove the correspondence between  $ \, \baut_{2,4}(\fg) \, $  and  $ \, \aut_{2,2}(\fg) \, $  and between  $ \, \baut_{2,2}(\fg) \, $  and  $ \, \aut_{2,4}(\fg) \, $.  Our main result for this part is the following (see  Theorems \ref{cpt-form-gr}  and  \ref{cpt-form-st}):

\medskip

   {\bf Theorem B.}  {\it Let  $ \fg $  be a simple complex contragredient Lie superalgebra.  Then:
 \vskip5pt
   \quad   (a)\;  $ \fg $  admits a  {\sl graded},  {\sl super-compact}  real form, given via  $ \, \omega \in \baut_{2,4}(\fg) \, $;
 \vskip3pt
   \quad   (b)\;  if\/  $ \fg $  is of type 1, then  $ \fg $  admits a  {\sl standard},  {\sl compact} real form;
 \vskip3pt
   \quad   (c)\;  if\/  $ \fg $  is of type 2, then  $ \fg $  has no  {\sl standard},  {\sl compact} real form.
 \vskip5pt
 In all cases, such super-compact or compact forms are unique up to inner automorphisms.}

\medskip

   We end our treatment giving a global version of the previous results
(see Theorems \ref{cpt-form-gr_sgrps},  \ref{cpt-form-st_sgrps}).

\medskip

{\bf Theorem C.}  {\it Let  $ \, \bG $  be a complex supergroup with  $ \, \fg = \Lie(G) \, $  being simple contragredient.  Then  $ \, \bG $  admits a  {\sl graded},  {\sl super-compact}  real form, which is unique up to inner automorphisms.
                                                         \par
   If  $ \, \fg $  is of type 1, then  $ \, \bG $  admits a  {\sl standard},  {\sl compact}  real form, unique up to inner automorphisms.  If  $ \, \fg $  is of type 2, then  $ \, \bG $  has no  {\sl standard},  {\sl compact}  real form.}

\vskip11pt

   {\bf Acknowledgements}.  This work was partially supported by the MIUR  {\sl Excellence Department Project\/}  awarded to the Department of Mathematics of the University of Rome ``Tor Vergata'', CUP E83C18000100006.  The authors thank M.-K.\ Chuah for helpful comments.
                                                 \par
   R.\ Fioresi and F.\ Gavarini thank respectively the department of Mathematics of Rome ``Tor Vergata'' and of Bologna for the wonderful hospitality while this work was prepared.

\vskip27pt

\section{Real structures of superspaces and superalgebras}

 Let our ground field  $ \, \bk = \C \, $.  For notation and basic facts on supergeometry, see  \cite{ccf, vsv, dm, ma}.

\subsection{Real structures of super vector spaces}

\begin{definition}  \label{super-str1}
 Let  $ \, V \,$  be a complex super vector space.  We call  {\it (generalized) real structure}, of  {\sl standard\/}  or  {\sl graded\/}  type respectively, on  $ V $  any  $ \C $--antilinear  super vector space morphism  $ \, \phi : V \longrightarrow V \, $  such that
 \vskip3pt
   \qquad   {\it (1)}\, \qquad  $ \phi^2\big|_{V_\zero} = \, \text{\rm id}_{V_\zero} \;\; $,
                                                        \par
   \qquad \quad  {\it (2.s)}\, \quad  $ \phi^2\big|_{V_\uno} = \, +\text{\rm id}_{V_\uno} \;\; $  \quad  ({\it {\sl standard}  real structure}),
                                                        \par
   \qquad \quad  {\it (2.g)}\, \quad  $ \phi^2\big|_{V_\uno} = \, -\text{\rm id}_{V_\uno} \;\; $  \quad  ({\it {\sl graded}  real structure}).
 \vskip3pt
   Note that giving a real structure on  $ V $  is the same as giving on it a  $ \C $--antilinear  action of the cyclic group  $ \Z_4 $  which on the even part  $ V_\zero $  factors through its quotient  $ \Z_2 \, $.  The action factors through  $ \Z_2 $  on all of  $ V $  if and only if the corresponding real structure is standard.
 \vskip3pt
   We call the subspace  $ V^\phi $  of fixed points  {\it standard\/}  or  {\it graded real form\/}  of  $ V \, $.  This  $ V^\phi $  is a real form of  $ V $  in the usual sense if and only if  $ \phi $  is an involution, i.e.\ in the standard case.
                                                         \par
   If in addition  $ V $  is a  Lie superalgebra, we require  $ \phi $  to be a Lie superalgebra (anti-linear) morphism, i.e.\ to preserve the Lie (super)bracket of  $ V $.  Similarly, we require the
analogous property when  $ \phi $  is an associative superalgebra, a superbialgebra, a Hopf superalgebra, etc.
\end{definition}

\smallskip

\begin{definition}
 We  define the categories  $ \smod^\st_\C $ and $ \smod^\gr_\C $  of  $ \C $--supermodules  with standard or graded real structure as follows.  The objects are pairs  $ (V,\phi) $,  where  $ V $  is any  $ \C $--supermodule  with  $ \phi $  as its real structure (standard or graded).  The morphisms from an object  $ \big(V',\phi'\big) $  to an object  $ \big(V'',\phi''\big) $   --- both either standard or graded ---   are those morphisms of  $ \C $--supermodules  $ \, f : V' \longrightarrow V'' \, $  such that  $ \, f \circ \phi' = \phi'' \circ f \, $;  in short, any such  $ f $ preserves the  $ \Z_4 $--action.  We use notation  $ \smod^\bullet_\C $  to denote either one of these categories, with  $ \, \bullet \in \{\mathrm{st},\mathrm{gr}\} \, $.
                                                                        \par
  If  $ \, (V,\phi) \in \smod^\bullet_\C \, $  and  $ \, V' \subseteq V \, $  is a super vector subspace of  $ V \, $,  with  $ \, \phi(V') = V' \, $,  we say that the real structure  $ (V',\phi|_{V'}) $  is induced by  $ (V,\phi) $  and we write  $ \, (V',\phi|_{V'}) \subseteq (V,\phi) \, $.
                                                            \par
   We can similarly define the categories  $ \salg^\st $  and $ \salg^\gr $  of all unital associative commutative superalgebras with a standard or graded real structure and the categories  $ \sLie^\st $ and  $ \sLie^\gr $ of all Lie  $ \C $--superalgebras  with a standard or graded real structure.
                                                                        \par
   As customary with superalgebras   --- cf.\ \cite{pe}  ---   for  $ \, A \in \salg_\C^\bullet \, $  we denote the real structure with the notation  $ \, a \mapsto \widetilde{a} \, $ ,  and we call such map  {\it standard\/  {\rm or}  graded conjugation}.
\end{definition}

\vskip5pt

\begin{remark}
 By its very construction,  $ \smod_\C^\bullet $  is a subcategory of the category  $ \smod_\C^{\Z_4} $  of supervector spaces with a $ \Z_4 $--action.  Moreover, the latter is also a tensor category, and then  $ \smod_\C^\bullet $  is actually a  {\sl tensor subcategory\/}:  namely, if  $ \, \big(V',\phi'\big) , \big(V'',\phi''\big) \in \smod_\C^\bullet \, $,  then  $ \, \phi' \otimes \phi'' \, $  is a real structure   --- of the correct type, i.e.\ either standard or graded ---   on  $ \, V' \otimes V'' \, $.
\end{remark}

\subsection{Real structures and real forms of functors}

   We now want to express functorially the notion of (generalized) real structure described in the previous section.  Assume that  $ V $  is a complex super vector space and consider the functor
  $$  h_V : \salg_\C \!\relbar\joinrel\relbar\joinrel\relbar\joinrel\longrightarrow
\smod_\C  \quad ,   \;\;\;\quad  A \mapsto {\big( A \otimes V \big)}_\zero \, =
\, A_\zero \otimes V_\zero + A_\uno \otimes V_\uno
$$
(the definition on the morphisms being clear), the  $ \Z_2 $--grading  being given by
$ \, {\big(h_V(A)\big)}_{\overline{z}} := A_{\overline{z}} \otimes V_{\overline{z}} \, $  for each  $ \, \overline{z} \in \Z_2 \, $.  This in fact is identified with the functor of points of the affine superspace  $ \A(V) $   --- see \cite{ccf},  Ch.\ 10.  When in addition  $ \, V = \fg \in \sLie_\C \, $  is a complex Lie superalgebra, the associated functor  $ \, h_\fg \, $  is actually valued in the category  $ {(\Z_2\text{--Lie})}_\C $  of complex,  $ \Z_2 $--graded Lie algebras, i.e.\  it is a functor  $ \; h_\fg : \salg_\C \!\relbar\joinrel\relbar\joinrel\relbar\joinrel\longrightarrow {(\Z_2\text{--Lie})}_\C \; $.

\smallskip

\begin{definition}  \label{gr-str2}
 Let  $ V $  a complex super vector space.
%
%
   For  $ \, \bullet \in \{\mathrm{st},\mathrm{gr}\} \, $,  let  $ \; \mathcal{L}_V \! := \mathcal{R} \circ h_V \circ \mathcal{F} \; $  where  $ \, \mathcal{F} : \salg^\bullet_\C \!\longrightarrow\! \salg_\C \, $  is the obvious forgetful functor and  $ \, \mathcal{R} : \smod_\C \!\longrightarrow\! \smod_\R \, $  is the obvious functor of scalar restriction.
 We call  {\it real structure} on  $ \mathcal{L}_V $  any natural transforma-\break{}tion $ \; \varphi : \mathcal{L}_{V} \!\relbar\joinrel\longrightarrow
\mathcal{L}_{V} \; $  such that for each  $ \, A \in \salg^\bullet_\C \, $  the map  $ \; \varphi_A : \mathcal{L}_{V}(A) \relbar\joinrel\longrightarrow \mathcal{L}_{V}(A) \; $  is
 \vskip2pt
   {\it (1)}\,  conjugate  $ A_\zero $--linear,  i.e.\  $ \;\; \varphi_A(a_1 X_1 + a_2 X_2) = \widetilde{a}_1 \, \varphi_A(X_1) + \widetilde{a}_2 \, \varphi_A(X_2) \; $  for all  $ \, a_i \in A_\zero \, $,  $ \, X_i \in \mathcal{L}_V(A) \, $,
 \vskip1pt
   {\it (2)}\,  parity-preserving, i.e.\  $ \;\; \varphi_A\big( A_{\overline{a}} \otimes V_{\overline{a}} \big) \, \subseteq \, A_{\overline{a}} \otimes V_{\overline{a}} \;\; $,
 \vskip1pt
   {\it (3)}\,  involutive, i.e.\  $ \, \varphi_{\!A}^{\;\,2} = 1 \;\; $.
%
%
 \vskip1pt
\noindent
 Such a  $ \varphi $  is called  {\sl standard},  resp.\  {\sl graded},  if  $ \, \bullet = \mathrm{st} \, $,  resp.\  $ \, \bullet = \mathrm{gr} \, $.
                                              \par
   If in addition  $ \, V = \fg \in \sLie_\C$ is a Lie superalgebra, we define a  {\it real structure\/}  on  $ \mathcal{L}_{\fg} $  as above, but adding the further condition that each  $ \varphi_A $  be a morphism of  ($ \Z_2 $--graded)  Lie algebras, i.e.\
$$
 \varphi_A\big(\big[X_1,X_2\big]\big)  \, = \, \big[\varphi_A(X_1),\varphi_A(X_2)\big]   \hskip27pt  \forall \;\; X_1, X_2 \in \mathcal{L}_{\fg}(A)
$$
\end{definition}

\smallskip

\begin{theorem}\label{eq-real}
   For every  $ \, \fg \in \sLie_\C $,  there exists a canonical, bijection between standard, resp.\ graded, real structures on  $ \mathcal{L}_{\fg} $  and standard, resp.\ graded, real structures on  $ \fg \, $.  Furthermore, this bijection induces an equivalence between the category of functors  $ \mathcal{L}_{\fg} $  with standard, resp.\ graded, real structures and  $ \sLiestc \, $,  resp.\  $ \sLiegrc $   --- and similarly for  $ \mathcal{L}_V $  and  $ \smodstc \, $,  resp.\  $ \smodgrc \, $.
\end{theorem}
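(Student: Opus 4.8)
The plan is to exhibit two mutually inverse assignments explicitly and then to promote the bijection to an equivalence, treating the bracket afterwards. In the forward direction, given a real structure $\phi$ on $V$ as in Definition \ref{super-str1}, I would set
\[
\varphi_A(a \otimes v) \; := \; \widetilde{a} \otimes \phi(v) \qquad \text{for homogeneous } a\otimes v \in \mathcal{L}_V(A),
\]
extended by conjugate $A_\zero$--linearity. Conditions (1) and (2) of Definition \ref{gr-str2} are immediate from the corresponding properties of $\phi$ and of the conjugation $a \mapsto \widetilde{a}$ on $A$, and naturality in $A$ holds because morphisms of $\salg_\C^\bullet$ by definition intertwine conjugations. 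The only delicate point is involutivity (3): on $A_\zero \otimes V_\zero$ one has $\widetilde{\widetilde{a}} = a$ and $\phi^2 = \mathrm{id}$, whereas on $A_\uno \otimes V_\uno$ each of the two factors contributes a sign — in the graded case $\widetilde{\widetilde{a}} = -a$ together with $\phi^2 = -\mathrm{id}$ — and these cancel, so that $\varphi_A^{\,2} = 1$ in both the standard and the graded regimes. This sign cancellation is the conceptual reason the graded notion becomes visible only functorially.

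For the inverse direction I must show that every $\varphi$ as in Definition \ref{gr-str2} has the form above for a unique $\phi$. On the even part I would put $\phi|_{V_\zero} := \varphi_\C$, identifying $V_\zero = \mathcal{L}_V(\C)$; naturality along the unit $\C \to A$ and conjugate $A_\zero$--linearity then force $\varphi_A(a\otimes v) = \widetilde{a}\otimes\phi(v)$ for $a \in A_\zero$, $v \in V_\zero$. To reach $V_\uno$ I introduce the test superalgebra $\Lambda = \C[\xi_1,\xi_2]$ of functions on $\A^{0|2}$, with standard (resp.\ graded) conjugation $\widetilde{\xi_1} = \xi_2$, $\widetilde{\xi_2} = \xi_1$ (resp.\ $\widetilde{\xi_2} = -\xi_1$), so that $\Lambda \in \salg_\C^\bullet$. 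Writing $\varphi_\Lambda(\xi_1 \otimes v) = \xi_1 \otimes a(v) + \xi_2 \otimes b(v)$, I exploit naturality with respect to the conjugation--preserving automorphism $\psi_t : \xi_1 \mapsto t\,\xi_1$, $\xi_2 \mapsto \overline{t}\,\xi_2$ ($t \in \C^\times$): comparison of the two sides, with $t$ non-real, kills the $\xi_1$--component and leaves $\varphi_\Lambda(\xi_1 \otimes v) = \widetilde{\xi_1} \otimes b(v)$. I then set $\phi|_{V_\uno}(v) := b(v)$, reading off antilinearity from condition (1) and $\phi^2 = \pm\mathrm{id}$ on $V_\uno$ from involutivity (3). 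Finally, for general $A$ and odd $a \in A_\uno$ the assignment $\xi_1 \mapsto a$, $\xi_2 \mapsto \widetilde{a}$ defines a morphism $\Lambda \to A$ in $\salg_\C^\bullet$ — this uses only that odd elements square to zero in a supercommutative algebra and that $\widetilde{\widetilde{a}} = \pm a$ — so naturality propagates $\varphi_A(a \otimes v) = \widetilde{a}\otimes\phi(v)$ to all of $\mathcal{L}_V(A)$, making the two assignments inverse to each other.

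It then remains to upgrade this to an equivalence and to handle the Lie case. A morphism in $\smod_\C^\bullet$ is a parity--preserving $\C$--linear $f : V' \to V''$ with $f\circ\phi' = \phi''\circ f$; since $\mathcal{L}_V = h_V$ is the functor of points of $\A(V)$, the (even) natural transformations $\mathcal{L}_{V'} \to \mathcal{L}_{V''}$ are exactly the maps $h_f : a\otimes v \mapsto a \otimes f(v)$ induced by such $f$, and the compatibility $\varphi'' \circ h_f = h_f \circ \varphi'$ is equivalent to $f\phi' = \phi''f$; this gives the asserted equivalence with $\smodstc$, resp.\ $\smodgrc$. When $V = \fg$ is a Lie superalgebra, the bracket on $\mathcal{L}_\fg(A) \in {(\Z_2\text{--Lie})}_\C$ is $[a\otimes x, b\otimes y] = (-1)^{|x|\,|b|}\,ab\otimes[x,y]$, and substituting $\varphi_A(a\otimes x) = \widetilde{a}\otimes\phi(x)$ together with parity--preservation of $\phi$ shows that $\varphi_A$ is a Lie morphism for all $A$ if and only if $\phi([x,y]) = [\phi(x),\phi(y)]$, using only that conjugation on $A$ is multiplicative; this yields the bijection and equivalence for $\mathcal{L}_\fg$, $\sLiestc$ and $\sLiegrc$.

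I expect the main obstacle to be the inverse direction — proving that an abstract functorial real structure $\varphi$ necessarily has the rigid shape $\widetilde{a}\otimes\phi(v)$ on the odd sector. Everything hinges on choosing the correct test object $\Lambda$ (two odd coordinates carrying the right conjugation) and on the scaling automorphism $\psi_t$ that separates the $\xi_1$-- and $\xi_2$--components; the verification that $\xi_1\mapsto a$, $\xi_2\mapsto\widetilde{a}$ is a legitimate morphism in $\salg_\C^\bullet$, which is what lets naturality bite, is the delicate bookkeeping step where the standard versus graded sign convention $\widetilde{\widetilde{a}} = \pm a$ is used decisively.
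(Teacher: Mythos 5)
Your proposal is correct and follows the same strategy as the paper's own (very brief) proof: the forward direction via $\varphi_A(a \otimes v) := \widetilde{a} \otimes \phi(v)$ is literally the paper's formula, and the inverse direction by evaluation on test objects is what the paper delegates to \cite{pe}, Theorem 2.6. In fact you supply a detail the paper's sketch glosses over --- since $\cL_V(\C) = V_\zero$, the paper's recipe $\phi(v) := \varphi_\C(v)$ only recovers $\phi$ on the even part, and your extraction of $\phi\big|_{V_\uno}$ from the Grassmann test algebra $\C[\xi_1,\xi_2]$ with the scaling automorphisms, together with the check that $\xi_1 \mapsto a$, $\xi_2 \mapsto \widetilde{a}$ is a morphism in $\salg_\C^\bullet$, is exactly the right way to close that gap.
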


\begin{proof}  If  $ \varphi $  is a real structure on  $ \cL_V $,  we have a corresponding real structure  $ \; \phi : V \rightarrow V \, $  on  $ V $  defi-\break{}ned by  $ \, \phi(v) := \varphi_\C(v) \, $.  Conversely, if  $ \phi $  is a real structure on  $ V $,  for each  $ \, A \in \salg_\C^\bullet \, $  we define a corresponding  $ \varphi_A $  by  $ \, \varphi_A(a \otimes v) := \widetilde{a} \otimes \phi(v) \, $.   Details can be found in  \cite{pe}, Theorem 2.6.
\end{proof}

\smallskip

   We now turn to examine generalized real forms in the functorial language.

\smallskip

\begin{definition}  \label{realf-def}
 Let  $ V $  be a complex super vector space with real structure  $ \phi \, $,  and  $ \varphi $  the corresponding real structure on the functor  $ \cL_V \, $,  as in  Theorem \ref{eq-real}.  We define  {\it real form (standard or graded) of\/}  $ \cL_V $  as being the functor  $ \; \cL_V^{\,\varphi} : \salg^\bullet_\C \!\lra \smod_\R \; $  given on objects by
  $$  \cL_V^{\,\varphi}(A)  \, := \,  {\cL_V(A)}^{\varphi_A}  \, = \,  \big\{\, x \in \mathcal{L}_{V}(A) \,\big|\, \varphi_A(x) = x \,\big\}   \eqno  \forall \;\;  A \in \salg^\bullet_\C  \quad  $$
 --- in other words,  $ \cL_V^{\,\varphi}(A) $  is the submodule of  $ \varphi_A $--invariants, i.e.\  the fixed points of  $ \varphi_A \, $,  in  $ \cL_V(A) \, $  --- and in the obvious way on morphisms.
 {\sl Note\/}  also that if  $ \, V = \fg \in \sLie_\C \, $  is in fact a complex Lie superalgebra, and  $ \phi $  is a real structure in the Lie sense, then each  $ \, \cL_\fg^{\,\varphi}(A) \, $  is automatically a  $ \Z_2 $--graded  {\sl real\/}  Lie subalgebra in  $  \cL_\fg(A) \, $,  so that  $ \cL_\fg^{\,\varphi} $  is actually a functor from  $ \salg^\bullet_\C $  to  $ {(\Z_2\text{--Lie})}_\R \, $,  the category of  $ \Z_2 $--graded  {\sl real\/}  Lie algebras.
\end{definition}

\smallskip

\begin{proposition}  \label{gr-rep1}
  With notation as above, assume  $ V $  is finite dimensional.  Then the functor  $ \mathcal{L}^{\,\varphi}_V $ is representable and
  it is represented by the symmetric superalgebra
$ \, S(V^*) \in \salg^\bullet_\C \, $.
\end{proposition}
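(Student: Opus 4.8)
The plan is to obtain the statement from the representability of the underlying complex affine superspace and then to follow the real structure through that identification. First I would recall that, after forgetting the real structures, the functor $ \cL_V : \salg_\C \lra \smod_\C $, $ A \mapsto {(A \otimes V)}_\zero $, is the functor of points of the affine superspace $ \A(V) $, and that for $ V $ finite dimensional it is represented by the symmetric superalgebra $ S(V^*) $ (see \cite{ccf}, Ch.\ 10). Concretely, fixing a homogeneous basis $ \{e_i\} $ of $ V $ with dual basis $ \{e^i\} $ generating $ S(V^*) $, there is an isomorphism
$$ \Theta_A : \cL_V(A) \stackrel{\sim}{\lra} \Hom_{\salg_\C}\big( S(V^*), A \big) \, , \qquad \textstyle\sum_i a_i \otimes e_i \,\longmapsto\, \big( e^i \mapsto a_i \big) \, , $$
natural in $ A \in \salg_\C $; finite dimensionality of $ V $ is precisely what makes this representation available.

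Next I would equip $ S(V^*) $ with a real structure. The structure $ \phi $ on $ V $ induces by (graded) duality a real structure $ \phi^\vee $ of the same type on $ V^* $, and extending $ \phi^\vee $ as an antilinear superalgebra endomorphism gives a conjugation $ \widetilde{(\cdot)} $ on $ S(V^*) $. I would verify that $ \big( S(V^*), \widetilde{(\cdot)} \big) $ is a genuine object of $ \salg_\C^\bullet $: the one axiom needing care is the sign of $ \widetilde{(\cdot)}^{\,2} $ on the odd part, which comes out right because an odd homogeneous element is a product of an odd number of odd generators, each contributing $ {(\phi^\vee)}^2 = \pm\,\id $ according as we are in the standard or the graded case.

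I would then transport $ \varphi_A $ across $ \Theta_A $. Writing $ \phi(e_i) = \sum_j \phi_{ji}\, e_j $, a short computation on generators shows that the transported involution sends an algebra map $ \psi $ to the one determined by $ e^j \mapsto \sum_i \phi_{ji}\,\widetilde{\psi(e^i)} $. In the standard case this is exactly conjugation $ \psi \mapsto \widetilde{(\cdot)}_A \circ \psi \circ \widetilde{(\cdot)} $ by the involutive conjugation on $ S(V^*) $, so its fixed points are the algebra maps intertwining the two conjugations, that is $ \Hom_{\salg_\C^\st}\big( S(V^*), A \big) $. Hence $ \Theta_A $ restricts to a bijection $ \cL_V^{\,\varphi}(A) \stackrel{\sim}{\lra} \Hom_{\salg_\C^\st}\big( S(V^*), A \big) $, natural in $ A $ by naturality of $ \Theta $ and $ \varphi $, which is the desired representation.

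The step I expect to be the main obstacle is the graded case, where $ \widetilde{(\cdot)}^{\,2} = -\id $ on odd elements, so that being a fixed point of conjugation by $ \widetilde{(\cdot)} $ and intertwining the two conjugations differ a priori by a sign on the odd generators. The argument closes only if $ \phi^\vee $ is taken to be the genuine graded contragredient of $ \phi $ --- which on the odd part is the negative of the naive transpose-conjugate --- so that this sign cancels and the intertwining condition reproduces exactly the $ \varphi_A $-fixed points, while simultaneously $ \widetilde{(\cdot)} $ remains a bona fide graded real structure on $ S(V^*) $. Fixing this sign, by means of the super-pairing and the graded-conjugation conventions of \cite{pe}, is the delicate heart of the proof; the remaining verifications, including naturality, are routine.
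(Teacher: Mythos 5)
Your proposal is correct and follows essentially the same route as the paper, whose proof is just the chain of identifications $\mathcal{L}_V^{\,\varphi}(A) = {(A\otimes V)}_\zero^{\varphi_A} = {\Hom(V^*,A)}^{\Z_4} = \Hom_{\salg^\bullet_\C}\big(S(V^*),A\big)$, with the real structure on $S(V^*)$ declared to be ``canonically induced'' by that of $V$. The sign analysis you carry out in the graded case --- choosing the dual structure on $V^*$ so that the $\varphi_A$--fixed points match the intertwining condition exactly --- is precisely the content the paper leaves implicit in that last equality, so your write-up is a legitimate (and more careful) expansion of the same argument rather than a different one.
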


\noindent
    {\it Proof}.  It is immediate by the following chain of equalities:
  $$  \begin{array}{rl}
   \mathcal{L}_V^{\,\varphi}(A)  &  \! = \;  {(A \otimes V\,)}_\zero^{\varphi_A}  \; = \;  {(A \otimes V\,)}_\zero^{\,\Z_4}  \; = \;  {\Hom\big( V^* , A \big)}^{\,\Z_4}  \; =  \\
   &  \! = \;  {\Big( \Hom_{\salg_\C}\big( S(V^*) \, , A \big) \!\Big)}^{\Z_4^{\phantom{\big|}}}  \, = \;  \Hom_{\salg^\bullet_\C}\big( S(V^*) \, , A \big)   \qquad \qquad \hskip4pt  \square
\end{array}  $$

\smallskip

\begin{remarks}  \label{altern_defs_real-strct's}
 The following are alternative, equivalent ways to introduce the notion of ``real structure'' on the functor  $ \cL_V $  for any  $ \, V \in \smod_\C \, $:
 \vskip1pt
   \textit{(a)}\;  Let  $ \overline{V} $  be the complex-conjugate of  $ V $,  that is  $ V $  itself as real vector space endowed with the conjugate complex structure.  Let  $ \, \mathcal{F} : \salg^\bullet_\C \!\longrightarrow\! \salg_\C \, $  be the forgetful functor considered above, and  $ \, \mathcal{C} : \salg^\bullet_\C \!\longrightarrow\! \salg^\bullet_\C \, $  be the functor given on objects by  $ \, \mathcal{C}(A) := \overline{A} \, $  and on morphisms by  $ \, \mathcal{C}(f) := f \, $;  \,then, setting  $ \, \cL'_V := h_V \circ \mathcal{F} \, $  and  $ \, \cL'_{\overline{V}} := h_{\overline{V}} \circ \mathcal{F} \, $  we have  $ \, h_{\overline{V}} = h_V \circ \mathcal{C} \, $  and  $ \, \cL'_{\overline{V}} = \cL'_V \circ \mathcal{C} \, $.  Using this language, giving a real structure on  $ \cL_V $  is equivalent to giving a pair of natural transformations  $ \; \varphi'_+ : \cL'_V \relbar\joinrel\longrightarrow \cL'_{\overline{V}} \; $  and  $ \; \varphi'_- : \cL'_{\overline{V}} \relbar\joinrel\longrightarrow \cL'_V \; $  that are parity preserving and such that  $ \; \varphi'_- \circ \varphi'_+ = \id_{\cL'_V} \; $  and  $ \; \varphi'_+ \circ \varphi'_- = \id_{\cL'_{\overline{V}}} \; $.
 \vskip1pt
   \textit{(b)}\;  If  $ \varphi $  is a real structure on  $ \cL_V \, $,  \,then  $ \; \varphi_A : \cL_V(A) \relbar\joinrel\longrightarrow \cL_V(A) \; $   --- for each  $ \, A \in \salg_\C^\bullet \, $  ---   is a real structure,  \textsl{in classical sense},  on the  $ \Z_2 $--graded  complex vector space  $ \cL_V(A) \, $,  which is conjugate  $ A_\zero $--linear  and preserves the  $ \Z_2 $--grading.  Now, let  $ {(\Z_2\textrm{--mod})}_\C^{\textrm{rs}} $  be the category of  $ \Z_2 $--graded  complex vector spaces  \textsl{with a conjugate  $ A_\zero $--linear,  $ \Z_2 $--graded  real structure},  and  $ \, {(\Z_2\textrm{--mod})}_\C^{\textrm{rs}} \,{\buildrel {\cF_*} \over {\relbar\joinrel\longrightarrow}}\, {(\Z_2\textrm{--mod})}_\C \; $  the obvious forgetful functor.  Then, just rephrasing the  Definition \ref{gr-str2},  we can quickly find that giving a real structure on  $ \cL_V $  is actually equivalent to giving a functor
 $ \; \dot{\cL} : \salg_\C^\bullet \relbar\joinrel\relbar\joinrel\longrightarrow {(\Z_2\textrm{--mod})}_\C^{\textrm{rs}} \; $
 such that  $ \; \cF_* \circ \dot{\cL} = \cL_V \circ \cF \; $.
                                                   \par
   Indeed, roughly speaking the condition  $ \; \cF_* \circ \dot{\cL} = \cL_V \circ \cF \; $  means that  ``$ \dot{\cL} $  coincides with  $ \cL_V $  up to forgetting any real structure'', hence we can say that, in a nutshell, any such functor  $ \dot{\cL} $  is (sort of)  ``$ \cL_V $  endowed with a pointwise real structure''.
\end{remarks}

\medskip

 \subsection{Real affine superspaces}  \label{sec_affine-real-spaces}  {\ }
 \vskip-3pt
   Let $ V $  be a complex super vector space of finite (super) dimension  $ r|s \, $;  its associated affine superspace  $ \A(V) $  is the complex superspace described by the functor $ \, \cL_V : \salg_\C \!\lra \smod_\C \, $,  which is represented by the complex commutative superalgebra  $ S(V^*) \, $.  If in addition  $ \phi $  is a real structure on  $ V $,  we  {\sl define\/}  the  {\it real affine superspace  $ \A(V,\phi) $  associated to  $ (V,\phi) $}
 as the ``superspace with real structure'' whose functor of points is  $ \cL_V^\varphi $  (as in  Proposition \ref{gr-rep1}  above), represented by the symmetric superalgebra  $ S(V^*) $  with real structure canonically induced by that of  $ V $.  We will also write  $ \, \A_{\bullet,\C}^{d_0|d_1} \! := \A(V, \phi) \, $  if  $ \, d_0\big|d_1 \, \, $  is the superdimension of  $ V \, $,  with  $ \bullet $  denoting the type of  $ \phi \, $.
 \vskip3pt
   Now observe that for any {\sl graded} real structure  $ \phi $  on a finite-dimensional complex superspace  $ V $,  from  $ \, \phi{\big|}_{V_\uno}^2 = -\text{id}_{V_\uno} \, $  it easily follows that  $ V_\uno $  has a  $ \C $--basis  $ \, \big\{ u_1 , \dots , u_\delta , w_1 , \dots , w_\delta \big\} \, $  such that  $ \, \phi(u_i) = +w_i \, $,  $ \, \phi(w_i) = -u_i \, $,  for all  $ \, i = 1 , \dots , \delta \, $.  In particular,  $ \, d_1 := \text{\sl dim}(V_\uno) = 2\,\delta \, $  is even, and  $ \, \phi{\big|}_{V_\uno} \, $  is described   --- as a  $ \C $--linear  map from  $ V_\uno $  to  $ \overline{V}_\uno \, $,  that is  $ V_\uno $  endowed with the conjugate complex structure ---   by the  $ 2 \times 2 $  block matrix
 $ \; \begin{pmatrix}
     \, 0  &  -I_\delta \;  \\
    +I_\delta  &  0 \,
      \end{pmatrix} \; $.
                                                                      \par
   In particular, if  $ V $  is a complex super vector space which is  {\sl entirely odd},  i.e.\  $ \, V = V_\uno \, $,  $ \, V_\zero = 0 \, $,  with graded real structure  $ \phi \, $,  then  $ S(V^*) $  is isomorphic to the complex Grassmann algebra  $ \, \Lambda_\C\big( \xi_1^+ , \dots , \xi_\delta^+ , \xi_1^- , \dots , \xi_\delta^- \big) \, $  in  $ \, 2\,\delta = d_\uno \, $  odd indeterminates   --- where  $ \, \delta := d_1\big/2 = \text{\sl dim}(V_\uno) \big/ 2 \, $  ---   with graded real structure given by  $ \, \phi\big( \xi_i^\pm \big) := \pm \xi_i^\mp \, $  for all  $ \, i = 1 , \dots , \delta \, $.  Note that the  $ A $--points   --- for any  $ \, A \in \salg_\C^{\text{gr}} \, $  ---   of  $ \, \A_{\text{gr},\C}^{0|d_1} := \A(V, \phi) \, $  are given by
  $$  \A_{\text{gr},\C}^{0|d_1}(A)  \,\; = \;\,  \Big\{ {\big( \alpha_i^+ , \alpha_i^- \big)}_{i=1,\dots,\delta} \,\Big|\, \alpha_i^\pm \in A_\uno \, , \, \widetilde{\alpha_i^\pm} = \pm \alpha_i^\mp \, , \; \forall \, i \,\Big\}  $$
or
  $$  \begin{matrix}
   \A_{\text{gr},\C}^{0|d_1}(A)  \,\;  &  = \;\,
\Big\{ \Big( \alpha_i^+ , +\widetilde{\alpha_i^+} \,\Big)_{i=1,\dots,\delta}
\,\Big|\; \alpha_i^+ \in A_\uno \, , \; \forall \; i = 1 , \dots , \delta \,\Big\}  \; =  \cr
   &  = \;\,  \Big\{ \Big(\!\! -\!\widetilde{\alpha_i^-} ,
\alpha_i^- \Big)_{i=1,\dots,\delta} \Big|\; \alpha_i^- \in A_\uno \, , \; \forall \; i = 1 , \dots , \delta \,\Big\}  \phantom{=}
      \end{matrix}  $$

\medskip

%
%
   When a real structure  $ \varphi $  on  $ \cL_V $  is  {\sl standard},  we have the following characterization of  $ \mathcal{L}_V^{\,\varphi} \; $:

\smallskip

\begin{proposition} \label{re-forms-svs}
 Let  $ \varphi $  be a  {\sl standard}  real form on  $ \mathcal{L}_V \, $,  and  $ \phi $  the corresponding real structure on  $ V $.  Then
 \vskip3pt
   \hskip-9pt   {\it (a)}\;\;   $ \mathcal{L}_V^{\,\varphi}(A)  \, := \,  {(A \otimes V)}_\zero^{\varphi_A} \, = \, {\big( A^\re \otimes V^\phi \,\big)}_\zero \, $,  \; with  $ \; A^\re = \big\{\, a \! \in \! A \,\big|\, a = \tilde{a} \,\big\}   \hfill  \forall \; A \in \salg_\C^\st \; $;   \qquad
 \vskip3pt
   \hskip-9pt   {\it (b)}\;\;   $ \mathcal{L}_V^{\,\varphi}(R \otimes \C)  \, = \,  \Hom_{\salg^\st_\C}\big( \C[V] , R \otimes \C \,\big) \, = \, \Hom_{\salg_\R}\big( \R\big[V^\phi\big] , R \,\big)   \hfill  \forall \; R \in \salg_\R \; $.   \qquad
\end{proposition}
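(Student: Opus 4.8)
The plan is to prove part (a) by a concrete fixed-point computation in a well-chosen real basis, and then to deduce part (b) by combining (a) with the representability statements already available. For (a), I would first use that in the \emph{standard} case $\phi$ is a genuine antilinear involution, so $V^\phi$ is an honest real form with $V = V^\phi \oplus i\,V^\phi$, and, $\phi$ being parity-preserving, $V^\phi$ is $\Z_2$-graded, $V^\phi = V^\phi_\zero \oplus V^\phi_\uno$. I would then fix a homogeneous $\R$-basis $\{e_j\}$ of $V^\phi$; by the above it is simultaneously a $\C$-basis of $V$, with $\phi(e_j) = e_j$. Writing any element of $A \otimes V$ uniquely as $\sum_j a_j \otimes e_j$ and recalling from Theorem \ref{eq-real} that $\varphi_A(a \otimes v) = \widetilde a \otimes \phi(v)$, I get $\varphi_A\big(\sum_j a_j \otimes e_j\big) = \sum_j \widetilde{a}_j \otimes e_j$, so the fixed-point equation reads $\widetilde{a}_j = a_j$, i.e.\ $a_j \in A^\re$, for every $j$. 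This identifies the fixed points with the image of $A^\re \otimes_\R V^\phi$; intersecting with the even part $(A \otimes V)_\zero = A_\zero \otimes V_\zero \oplus A_\uno \otimes V_\uno$ --- which keeps $a_j \in A^\re_\zero$ for $e_j \in V^\phi_\zero$ and $a_j \in A^\re_\uno$ for $e_j \in V^\phi_\uno$ --- gives precisely $\big(A^\re \otimes_\R V^\phi\big)_\zero$, as claimed. One inclusion is immediate (real scalars on fixed vectors stay fixed); the content is the reverse inclusion, which the basis computation supplies at once.

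For part (b), the first equality is Proposition \ref{gr-rep1} in the standard case: since $\C[V] = S(V^*)$ represents $\mathcal{L}_V^{\,\varphi}$ over $\salg^\st_\C$, evaluating at $A = R \otimes \C$ yields $\mathcal{L}_V^{\,\varphi}(R \otimes \C) = \Hom_{\salg^\st_\C}\big(\C[V], R \otimes \C\big)$. For the second equality I would equip $R \otimes \C$ with its canonical standard real structure $\widetilde{r \otimes z} = r \otimes \bar z$, so that $(R \otimes \C)^\re = R \otimes 1 \cong R$ as real superalgebras. Substituting into part (a) gives $\mathcal{L}_V^{\,\varphi}(R \otimes \C) = \big((R \otimes \C)^\re \otimes_\R V^\phi\big)_\zero \cong \big(R \otimes_\R V^\phi\big)_\zero$. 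Finally I would identify the last space with the $R$-points of the real affine superspace attached to $V^\phi$: the universal property of the symmetric superalgebra gives $\big(R \otimes_\R V^\phi\big)_\zero = \Hom_{\smod_\R}\big((V^\phi)^*, R\big) = \Hom_{\salg_\R}\big(\R[V^\phi], R\big)$, which is the asserted equality.

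I expect part (a) to be the main point, not for computational reasons but because it is exactly where \emph{standardness} is used: the existence of a real basis of $V^\phi$ that is at the same time a complex basis of $V$ hinges on $\phi$ being an involution, and this step breaks down in the graded case (which is why only the functorial description survives there). The remaining care is bookkeeping of the $\Z_2$-grading --- choosing the real basis homogeneously so that the passage to the even part distributes correctly over the two summands $A_\zero \otimes V_\zero$ and $A_\uno \otimes V_\uno$ --- together with checking that all the identifications in (b) are natural in $R$, so that they are isomorphisms of functors and not merely bijections objectwise.
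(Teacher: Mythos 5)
Your proposal is correct and follows essentially the same route as the paper: both rest on the fact that standardness makes $\phi$ an honest antilinear involution, so $V=V^\phi\oplus i\,V^\phi$, and both obtain (b) by running the two chains of identifications through Proposition \ref{gr-rep1} and through part (a) with $(R\otimes\C)^\re=R$. The only (cosmetic) difference is in (a), where the paper decomposes $a=a_1+ia_2$, $v=v_1+iv_2$ and uses the averaging $x+\varphi_A(x)$, while you expand in a homogeneous real basis of $V^\phi$ --- your version actually spells out the reverse inclusion a bit more completely.
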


\noindent
 {\it Proof}.  {\it (a)}\,  Definitions give  $ \; {(A \otimes V)}_\zero^{\varphi_A} = \big\{\, a \otimes v + \varphi_A (a\otimes v) \,\big|\, |a| = |v| \,\big\} \; $.  Let  $ \, a = a_1 + i\,a_2 \, $,  $ \, v = v_1 + i\,v_2 \, $,  so that  $ \, \widetilde{a} = a_1 - i\,a_2 \, $  and  $ \, \phi(v) = v_1 - i\,v_2 \, $.  Then
  $$  (a_1+ia_2) \otimes (v_1+iv_2) + (a_1-ia_2) \otimes (v_1-iv_2)  \; = \;  2a_1 \otimes v_1-2a_2 \otimes v_2 \in {\big( A^\re \otimes V^\phi \,\big)}_\zero  $$
   \indent   {\it (b)}\,  By  Proposition \ref{gr-rep1}  we have
  $$  \displaylines{
   \mathcal{L}_V^{\,\varphi}(R \otimes \C)  \; = \;  {\big( (R \otimes_\R \C) \otimes_\C V \,\big)}_\zero^{\varphi_{R \otimes \C}}  \; = \;  \Hom_{\smod^\st_\C} \big( V^* , R \otimes_\R \C \big)  \; =   \hfill  \cr
   \hfill   = \;  \Hom_{\salg^\st_\C} \big( S(V^*) \, , R \otimes_\R \C \,\big)  \; = \;  \Hom_{\salg^\st_\C} \big( \C[V] \, , R \otimes_\R \C \,\big)  }  $$
  \indent   On the other hand, by claim  {\it (a)\/}  we have
  $$  \displaylines{
   \mathcal{L}_V^{\,\varphi}(R \otimes \C)  \; = \;  {\big( (R \otimes_\R \C) \otimes_\C V \,\big)}_\zero^{\varphi_{R \otimes \C}}  \; = \;  {\big( {(R \otimes_\R \C)}^{\text{re}} \otimes_\R V^\phi \,\big)}_\zero  \; =   \hfill  \cr
   \hfill   = \;  {\big( R \otimes_\R V^\phi \,\big)}_\zero  \; = \;  \Hom_{\smod_\R}\big( {(V^\phi)}^* , R \,\big)  \; = \;  \Hom_{\salg_\R}\big(\, \R\big[V^\phi\big] \, , R \,\big)
  \quad  \square  }  $$

\medskip

   Notice that, by claim  {\it (b)\/}  of the previous proposition, we have that
%
%
  $ \; \mathcal{L}_V^{\,\varphi}(A) \, = \, h_{V^\phi}(A^\re) \; $
 because in the standard case we have  $ \, A = A^\re \otimes \C \; $;  therefore we can identify  $ \mathcal{L}_V^{\,\varphi} $
%
%
 with the functor  $ h_{V^\phi} $  representing the real super vector space  $ V^\phi \, $.

\smallskip

\begin{observations}  \label{subspaces}
 {\it (a)}\,  If  $ \, V' \subseteq V \, $  is a super vector subspace with real structure induced by  $ (V,\phi) \, $,  then  $ \, \cL_{V'}^\varphi(A) \subseteq \cL_V^\varphi(A) \, $  for all  $ \, A \in {\salg}_\C^\bullet \; $.
                                                       \par
   {\it (b)}\,  If  $ \, V' \subseteq V \, $  is a super vector subspace with  {\sl standard\/}  real structure induced by  $ (V,\phi) \, $,  then  $ \, (V')^\phi \subseteq V^\phi \, $  as real super vector spaces.
\end{observations}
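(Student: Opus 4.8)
The plan is to derive both inclusions straight from the definitions, observing that each is governed by the compatibility of the inclusion $V' \hookrightarrow V$ with the pertinent conjugation. The common starting point I would record is that, since the induced real structure on $V'$ is by hypothesis $\phi|_{V'}$ with $\phi(V') = V'$, the corresponding functorial real structure on $\cL_{V'}$ is nothing but the restriction of $\varphi$: indeed, for $v \in V'$ one has $\varphi_A(a \otimes v) = \widetilde{a} \otimes \phi(v) \in A \otimes V'$, so $\varphi_A$ preserves the submodule $\cL_{V'}(A) = (A \otimes V')_\zero \subseteq (A \otimes V)_\zero = \cL_V(A)$, and its restriction there is exactly the real structure attached to $(V', \phi|_{V'})$.

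For part \textit{(a)}, this observation finishes the argument at once. Because $\varphi_A$ stabilizes the subspace $\cL_{V'}(A)$ inside $\cL_V(A)$, the $\varphi_A$--fixed points of $\cL_{V'}(A)$ are precisely those $\varphi_A$--fixed points of $\cL_V(A)$ that happen to lie in $\cL_{V'}(A)$; that is,
$$ \cL_{V'}^\varphi(A) \; = \; \cL_{V'}(A) \cap \cL_V^\varphi(A) \; \subseteq \; \cL_V^\varphi(A) \qquad \forall \; A \in \salg_\C^\bullet \, , $$
and this inclusion is plainly natural in $A$. Phrased functorially, the inclusion $V' \hookrightarrow V$ induces a monomorphism of functors $\cL_{V'} \hookrightarrow \cL_V$ intertwining the two real structures, hence a monomorphism of the associated real--form subfunctors.

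For part \textit{(b)}, I would argue directly over $\C$, using that in the \emph{standard} case $\phi$ is an antilinear involution. Then $V^\phi$ is a genuine real form of $V$ --- a real super vector space with $V^\phi \otimes_\R \C = V$ --- and likewise $(V')^\phi$ is a real form of $V'$; both are graded because $\phi$ is parity--preserving. Since $\phi|_{V'}$ is merely the restriction of $\phi$, one gets $(V')^\phi = \big\{ v \in V' \mid \phi(v) = v \big\} = V' \cap V^\phi$, so $(V')^\phi \subseteq V^\phi$ as real super vector spaces.

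I do not expect a genuine obstacle, as each claim unwinds to a one--line set--theoretic inclusion; the only substantive point is understanding why \textit{(b)} must be confined to the standard case. In the graded case $\phi\big|_{V_\uno}^2 = -\id_{V_\uno}$ forces $\phi(v) = v$ to entail $v = -v$, hence $v = 0$, for odd $v$; thus $V^\phi$ has trivial odd part and is not a real form in the usual sense, so the literal fixed--point statement of \textit{(b)} would be vacuous on the odd component. This is precisely why \textit{(a)} is phrased functorially (it then covers both types) while \textit{(b)} is restricted to the standard setting.
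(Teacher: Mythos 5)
Your proposal is correct and does exactly what the paper intends: the paper states these as observations with no written proof precisely because they unwind to the one-line inclusions you give, namely that $\varphi_A$ stabilizes $\cL_{V'}(A)=(A\otimes V')_\zero$ so that $\cL_{V'}^{\,\varphi}(A)=\cL_{V'}(A)\cap\cL_V^{\,\varphi}(A)$, and that $(V')^\phi=V'\cap V^\phi$ in the standard case. Your closing remark on why \textit{(b)} is confined to the standard case is also consistent with the paper's own comment after Definition \ref{super-str1} that $V^\phi$ is a real form in the usual sense only when $\phi$ is an involution.
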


\medskip

\section{Real structures and real forms of supergroups}  \label{sec: Re-struct_Re-forms_x_SGroups}  {\ }
 \vskip-3pt
 We now want to define the notion of real structure  and real form of a supergroup, from different points of view. Let  $ \sgrps_\C \, $ denote the category of complex supergroups.

\subsection{Real structures on supergroups}  \label{subsec: real-structs-sgrps}

   We shall give our definition of real structure using both the sHCp's and the functor of points approach.  We first record a couple of auxiliary observations.

\smallskip

\begin{observation}  \label{obs:cplx-sgrps=real-sgrps}
 Let  $ \bG $  be a complex supergroup, and  $ \Lie(\bG) $  its Lie superalgebra.
   Let  $ \; \mathcal{F} : \salg_\C^\bullet \lra \salg_\C \; $  be the obvious forgetful functor, and  $ \, \mathcal{R} : \smod_\C \!\longrightarrow\! \smod_\R \, $  be the obvious functor of scalar restriction.
 Thinking of  $ \, \bG \, $  as a functor defined on  $ \salg_\C \, $,  we use notation  $ \, \bG^\bullet  := \bG \circ \mathcal{F} \, $.  Then in particular we have   --- with notation of Definition \ref{gr-str2}  ---   $ \; \cL_{\Lie(\bG)} := \mathcal{R} \circ h_{\Lie(\bG)} \circ \mathcal{F} = \mathcal{R} \circ \cL_{\Lie(\bG^\bullet)} \, $.
                                                          \par
   Similarly, for the complex conjugate supergroup  $ \overline{\bG} $  we have a parallel functor  $ \overline{\bG^\bullet} \, $.
%
%
\end{observation}

\smallskip

\begin{lemma}  \label{lem: eq_restr-Lie(G)}
 Let\/  $ \bG $  be a complex supergroup and\/  $ \Lie(\bG) $  its Lie superalgebra, and consider any natural transformation  $ \, \Phi : \bG^\bullet \relbar\joinrel\longrightarrow \overline{\bG^\bullet} \, $   --- which loosely speaking can be equivalently seen as  $ \, \Phi : \overline{\bG^\bullet} \relbar\joinrel\longrightarrow \bG^\bullet \, $.  Then the following are equivalent (notations as in  Observation \ref{obs:cplx-sgrps=real-sgrps}  above):
 \vskip3pt
%
%
   {\it (a)}\,  $ \; \mathcal{R}\big(\Lie(\Phi)\big) : \cL_{\Lie(\bG)} \relbar\joinrel\longrightarrow \cL_{\Lie(\bG)} \; $  is a real structure for\/  $ \cL_{\Lie(\bG)} \, $.
 \vskip3pt
   {\it (b)}\,  $ \; \big( \Phi_{\!A[\epsilon]} \circ \bG^\bullet(v_a) \big)(z) \, = \, \big(\bG^\bullet(v_{\,\widetilde{a}}) \circ \Phi_{\!A[\epsilon]} \big)(z) \; $  for  $ \, A \in \salg_\C^\bullet \; $,  $ \, a \in A_0 \, $,  $ \, z \in {\Lie(\bG)}^\bullet(A) \, $,  with  $ \, v_a : A[\epsilon] \relbar\joinrel\lra A[\epsilon] \, $  given by  $ \, v_a(x+\epsilon\,y) := x + \epsilon\,a\,y \, $.
\end{lemma}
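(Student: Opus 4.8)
The plan is to reduce condition (a) to the single requirement that $\Lie(\Phi)$ be conjugate-linear, and then to recognise that requirement, written out functorially, as condition (b); the bridge between the two is the standard dual-number presentation of the Lie-superalgebra functor. First I would record the two structural facts underlying this presentation. For the complex supergroup functor $\bG^\bullet$ on $\salg_\C^\bullet$ one has
$$\Lie(\bG)^\bullet(A) \, = \, \ker\big(\bG^\bullet(A[\epsilon]) \longrightarrow \bG^\bullet(A)\big),$$
the kernel of the evaluation $\epsilon \mapsto 0$, realised inside $\bG^\bullet(A[\epsilon])$; and the $A_\zero$-module structure on $\Lie(\bG)^\bullet(A)$ is implemented functorially by the dual-number endomorphisms $v_a$, i.e.\ $a \cdot z = \bG^\bullet(v_a)(z)$ for $a \in A_\zero$ and $z \in \Lie(\bG)^\bullet(A)$. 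I would then note that, since $\Phi$ commutes with $\epsilon \mapsto 0$ by naturality, $\Phi_{A[\epsilon]}$ preserves this kernel and its restriction is exactly $\Lie(\Phi)_A$.

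With this dictionary the core step is immediate. By Definition \ref{gr-str2}, $\Lie(\Phi)$ is a real structure on $\Lie(\bG^\R)$ precisely when each $\Lie(\Phi)_A$ is conjugate $A_\zero$-linear, parity-preserving, bracket-compatible, and involutive. The last three hold automatically or transfer from $\Phi$: being induced by a morphism of real supergroups, $\Lie(\Phi)$ is $\R$-linear, respects the $\Z_2$-grading and the bracket, while involutivity passes through the functoriality $\Lie(\Phi)^2 = \Lie(\Phi\circ\Phi)$ from the corresponding property of $\Phi$. Hence (a) reduces to $\Lie(\Phi)_A(a \cdot z) = \widetilde{a}\cdot \Lie(\Phi)_A(z)$ for all $a \in A_\zero$, $z \in \Lie(\bG)^\bullet(A)$. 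Substituting $a\cdot z = \bG^\bullet(v_a)(z)$, $\widetilde{a}\cdot w = \bG^\bullet(v_{\widetilde{a}})(w)$ and $\Lie(\Phi)_A = \Phi_{A[\epsilon]}|_{\Lie(\bG)^\bullet(A)}$ turns this verbatim into
$$\big(\Phi_{A[\epsilon]} \circ \bG^\bullet(v_a)\big)(z) = \big(\bG^\bullet(v_{\widetilde{a}}) \circ \Phi_{A[\epsilon]}\big)(z),$$
which is exactly (b); as each step is an equivalence, both implications follow.

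I expect the main obstacle to lie not in this final identification but in the careful bookkeeping behind the dictionary, and in particular in the appearance of the \emph{conjugate} $v_{\widetilde{a}}$. One must fix the real structure on the dual-number extension $A[\epsilon] \in \salg_\C^\bullet$ so that $\epsilon$ is even and fixed by conjugation, $\widetilde{\epsilon} = \epsilon$; it is exactly this choice that makes $v_a$ an endomorphism of the complex superalgebra $A[\epsilon]$ (so that $\bG^\bullet(v_a)$ is defined) while failing to respect the real structure unless $a = \widetilde{a}$, and that forces $v_{\widetilde{a}}$ rather than $v_a$ on the right-hand side when the conjugate-linearity of $\Lie(\Phi)$ is transported through $\Phi_{A[\epsilon]}$. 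I would therefore verify with care that $\bG^\bullet(v_a)$ genuinely implements scalar multiplication on $\Lie(\bG)^\bullet(A)$ and that $\Phi_{A[\epsilon]}$ preserves the relevant kernel; the remaining automaticity claims for the parity, bracket, and involutivity conditions are then routine, using the functoriality of $\Lie$ recorded in Observations \ref{obs:cplx-sgrps=real-sgrps}.
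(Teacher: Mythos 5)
Your proposal is correct and takes essentially the same route as the paper's own (very short) proof: both rest on the two facts from \cite{ccf}, Ch.~11, that $\bG^\bullet(v_a)$ implements the $A_\zero$--action on $\Lie\big(\bG^\bullet\big)(A)$ and that $\Phi_{A[\epsilon]}$ restricts, on the kernel realizing $\Lie(\bG)^\bullet(A)$ inside $\bG^\bullet\big(A[\epsilon]\big)$, to $\Lie(\Phi)_A$, after which condition \emph{(b)} becomes verbatim the conjugate-linearity statement $\Lie(\Phi)_A(a\,.\,z)=\widetilde{a}\,.\,\Lie(\Phi)_A(z)$. Your additional bookkeeping (the kernel description, the choice $\widetilde{\epsilon}=\epsilon$, and the disposal of the parity, bracket and involutivity clauses) only spells out what the paper compresses into ``which is equivalent to the conditions in \emph{(a)}''.
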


\begin{proof}
 By definition of $ \bG^\bullet(v_a) $   --- see  \cite{ccf}, \S 11.3 ---   we have  $ \,\; \bG(v_a)(z) = a\,.\,z \; $  for all  $ \, z \in \Lie\big(\bG^\bullet\big)(A)  \, $,  that is  $ \bG^\bullet(v_a) $  gives the action of  $ \, a \in \cA_0 \, $  onto  $ \, \Lie\big(\bG^\bullet\big)(A) \, $.  Moreover, by  \cite{ccf}, Ch.\ 11,  we have  $ \; \Phi_{\!A[\epsilon]}(z) = {\Lie(\Phi)}_A(z) \; $  for all  $ \, z \in \Lie\big(\bG^\bullet\big)(A)  \, $.  But then the condition in  {\it (b)\/}  reads
 $ \; {\Lie(\Phi)}_A(a\,.\,z) \, = \, \widetilde{a}\,.\,{\Lie(\Phi)}_A(z) \; $
which (applying  $ \mathcal{R} $)  is equivalent to the conditions in  {\it (a)}.
\end{proof}

\smallskip

   The following definition is inspired by  \cite{pe}:

\smallskip

\begin{definition}  \label{def:re-str_sgroup2}
 {\it (a)}\,  Let  $ \bG $  be a complex supergroup and  $ \Lie(\bG) $  its Lie superalgebra.  We call {\it (generalized) real structure\/}  on  $ \bG $  any natural transformation
 $ \, \Phi : \bG^\bullet \relbar\joinrel\longrightarrow \overline{\bG^\bullet} \, $  such that
 \vskip3pt
   \quad   {\it (a.1)}\,  $ \; \Phi $  is an involution,  i.e.\  $ \;\; \Phi^2 = \text{id}_{{}_{\scriptstyle \bG^\bullet}} \;\; $,
 \vskip3pt
   \quad   {\it (a.2)}\,
%
%
 $ \; \mathcal{R}\big(\Lie(\Phi)\big) : \cL_{\Lie(\bG)} \relbar\joinrel\longrightarrow
\cL_{\Lie(\bG)} \; $  is a real structure for  $ \cL_{\Lie(\bG)} \;\; $.
 \vskip5pt
   {\it (b)}\,  The pairs  $ (\bG,\Phi) $  consisting of a complex supergroup with a real structure on it, along with all morphisms among them that respect the real structures on both sides, form a category that we denote hereafter by  $ \sgrps_\C^\bullet \, $;  moreover, we also denote by  $ \; \mathcal{F} : \sgrps_\C^\bullet \relbar\joinrel\lra \sgrps_\C \, $,  \, slightly abusing the notation, the natural forgetful functor, see  Observation \ref{obs:cplx-sgrps=real-sgrps}{\it (b)}.
\end{definition}

\vskip1pt

%
%


   As complex supergroups correspond to sHCp's (via a category equivalence), we introduce the notion of generalized real structure for the latter.


\begin{definition}  \label{def:sr-str_sHCp}
%
%
 Let  $ (G_+,\fg) $  be a complex sHCp.  We call  {\it (generalized) sHCp real structure\/}  on
$ (G_+,\fg) $  any pair  $ (\Phi_+,\phi) $  such that
 \vskip3pt
   {\it (a)} \;\;  $ \Phi_+ $  is a real structure (in the classical sense) on the complex algebraic group  $ G_+ \, $;
%
%
 \vskip3pt
   {\it (b)} \;\;  $ \phi \, $  is a real structure on the complex Lie superalgebra  $ \fg \, $;
 \vskip3pt
   {\it (c)} \;\;  $ \, {\big( \Lie(\Phi_+) \big)}_{1_{G_+}} \!\! = \,  \phi{\big|}_{\fg_\zero} \;\, $.
 \vskip5pt
  Then, we can define the category of standard or graded real sHCp's  $ \sHCp_\C^\bullet \, $, according to the type of  $ \phi \, $,  whose morphisms are morphisms of sHCp's which preserve the real structures on either side; in addition, once more we have a natural forgetful functor  $ \; \mathcal{F} : \sHCp_\C^\bullet \relbar\joinrel\lra \sHCp_\C \; $, again with a small
abuse of notation   --- see  Observation \ref{obs:cplx-sgrps=real-sgrps}{\it (b)}.
\end{definition}

\vskip1pt

\begin{remark}  \label{rmk: real-struct_(G_+,g)=C-linear/cplx-conj}
 Just like a real structure on a complex vector superspace  $ V $  can be thought of as a special  {\sl  $ \C $--linear map\/}  from  $ V $  to its complex-conjugate    $ \overline{V} $,  or viceversa, similarly a real structure on a complex supergroup  $ \bG $  can be seen as a special morphism from  $ \bG^\bullet $  to its complex-conjugate, denoted by  $ \overline{\bG^\bullet} \, $.  In the same way, a real structure on a complex sHCp   $ (G_+,\fg) $  can be seen as a particular morphism from   $ (G_+,\fg) $  to its complex-conjugate  $ \overline{(G_+,\fg)} $   --- see \cite{cfk1, dm} for more details.
\end{remark}

\medskip

   We show now that the two notions of real structure, that we have introduced, are indeed equivalent,
through the above mentioned correspondence between supergroups and sHCp's.

\medskip

\begin{proposition}  \label{prop:eq_defs-rst_sgrps-sHCp's}
 Let  $ \bG $  be a complex supergroups and  $ (G_+,\fg) $  a complex sHCp that correspond to each other.  Then there is a one-to-one correspondence between real structures on  $ \bG $  and real structures on  $ (G_+,\fg) \, $.  This induces an equivalence of the corresponding categories  $ \sgrps_\C^\bullet $  and  $ \sHCp^\bullet \, $,  which is consistent   --- via the natural forgetful functors ---   with the equivalence between supergroups and sHCp's: in other words, the following diagram of functors (whose horizontal arrows are the above mentioned equivalences) is commutative
  $$  \xymatrix{
   \sgrps_\C \; \ar[rrr]^\simeq
  &  &  &   \; \sHCp_\C  \\
   &  &  &  \\
   \ar[uu]^{\mathcal{F}} \sgrps_\C^\bullet \, \ar[rrr]_\simeq
  &  &  &  \; \sHCp_\C^\bullet \ar[uu]_{\mathcal{F}}  }  $$
\end{proposition}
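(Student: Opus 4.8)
The plan is to deduce the statement by transporting the extra ``real'' datum along the known equivalence of categories $ \, \mathcal{S} : \sgrps_\C \longrightarrow \sHCp_\C \, $ between complex supergroups and complex sHCp's (Observation \ref{obs:cplx-sgrps=real-sgrps}). The cleanest device is the reformulation of Remark \ref{rmk: real-struct_(G_+,g)=C-linear/cplx-conj}: a real structure on $\bG$ is a suitably involutive morphism $ \, \bG \longrightarrow \overline{\bG} \, $ to the complex-conjugate supergroup, and likewise a real structure on $(G_+,\fg)$ is such a morphism $ \, (G_+,\fg) \longrightarrow \overline{(G_+,\fg)} \, $. Since $\mathcal{S}$ is a canonical construction it commutes with complex conjugation, i.e. $ \, \mathcal{S}\big(\overline{\bG}\big) = \overline{\mathcal{S}(\bG)} = \overline{(G_+,\fg)} \, $, so $\mathcal{S}$ carries morphisms $ \bG \to \overline{\bG} $ bijectively to morphisms $ (G_+,\fg) \to \overline{(G_+,\fg)} $. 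This already yields a bijection on the underlying morphisms; what remains is to match the defining conditions of a real structure on the two sides.

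First I would fix the correspondence on objects. Given a real structure $\Phi$ on $\bG$ (Definition \ref{def:re-str_sgroup2}), applying $\mathcal{S}$ produces a pair $(\Phi_+,\phi)$, where $\Phi_+$ is the induced endomorphism of the complex group $G_+$ and $ \, \phi := \Lie(\Phi) \, $ is, by Lemma \ref{lem: eq_restr-Lie(G)} together with Theorem \ref{eq-real}, a real structure on $\fg$. I would then check the three conditions of Definition \ref{def:sr-str_sHCp}: condition (b), that $\phi$ is a real structure on $\fg$, is exactly (a.2) read through Lemma \ref{lem: eq_restr-Lie(G)} and Theorem \ref{eq-real}; condition (c), the compatibility $ \, \Lie(\Phi_+)_{1_{G_+}} = \phi|_{\fg_\zero} \, $, is automatic, being precisely the defining compatibility of an sHCp morphism built into $\mathcal{S}$; and condition (a), that $\Phi_+$ is a classical real structure on $G_+$, follows by restricting the involutivity $ \, \Phi^2 = \id \, $ to the reduced part, giving $ \, \Phi_+^2 = \id \, $, together with the conjugate nature of $\Phi_+$ inherited from $\Phi$. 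The inverse assignment takes $(\Phi_+,\phi)$, turns $\phi$ into a morphism $\psi$ of real supergroups via Theorem \ref{eq-real}, verifies that $(\Phi_+,\psi)$ is a morphism of real sHCp's, and applies $\mathcal{S}^{-1}$; the two assignments are mutually inverse because $\mathcal{S}$ is an equivalence.

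The main obstacle will be the sign bookkeeping in the involutivity condition (a.1), $ \, \Phi^2 = \id_{\bG^\R} \, $, in the graded case. Here one must reconcile $ \, \Phi^2 = \id \, $ with the fact that a graded real structure satisfies $ \, \phi^2|_{\fg_\uno} = -\id \, $ rather than $+\id$. The resolution is that $\Lie(\Phi)$ is to be read as a natural transformation of the functor $ \, \cL_\fg : \salg_\C^\gr \longrightarrow \smod_\R \, $, whose involutivity $ \, \varphi_A^2 = 1 \, $ is demanded for every test superalgebra $A$: evaluating on an $A$ with nontrivial odd part, the graded conjugation $ \, a \mapsto \widetilde{a} \, $ on $A_\uno$ contributes the sign $-1$ that cancels the $-1$ from $ \phi^2|_{\fg_\uno} $, so that $ \, \varphi_A^2 = 1 \, $ and $ \, \phi^2|_{\fg_\uno} = -\id \, $ are perfectly consistent. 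This is exactly the content already packaged in Lemma \ref{lem: eq_restr-Lie(G)} and Theorem \ref{eq-real}, so the obstacle is dispatched by invoking them carefully rather than by any new computation. Once the object-level bijection is in place, the upgrade to an equivalence of categories is formal: a morphism $ (\bG,\Phi) \to (\bG',\Phi') $ in $\sgrps_\C^\bullet$ is a supergroup morphism intertwining the real structures, and under $\mathcal{S}$ it corresponds to an sHCp morphism intertwining $(\Phi_+,\phi)$ with $(\Phi_+',\phi')$, so fullness and faithfulness of the induced functor descend from those of $\mathcal{S}$. Finally, commutativity of the square of forgetful functors is immediate, since forgetting the real structure and then applying $\mathcal{S}$ agrees by construction with applying the real-structured equivalence and then forgetting.
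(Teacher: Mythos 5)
Your proposal is correct and follows essentially the same route as the paper: both directions transport the real structure along the supergroup/sHCp equivalence, with the forward map given by $ \big( \Phi{|}_{G_+} , \Lie(\Phi) \big) $ and the converse obtained from the quasi-inverse equivalence. The only difference is one of explicitness --- where you invoke abstractly that the equivalence commutes with complex conjugation, the paper pins the converse down concretely via the formula $ \, \Phi(1+\xi\,X) := 1 + \widetilde{\xi}\,\phi(X) \, $ on the generators $ 1+\xi\,X $ supplied by the explicit equivalence of \cite{ga1,ga2} --- and your discussion of the graded sign cancellation ($ \widetilde{\widetilde{a\,}} = -a $ on $ A_\uno $ against $ \phi^2{|}_{\fg_\uno} = -\mathrm{id} $) is a correct clarification of a point the paper leaves implicit.
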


\begin{proof}
 One way it is clear: if we have a real structure  $ \Phi $  on  $ \bG $  then via the equivalence  $ \; \sgrps_\C \,{\buildrel \cong \over {\relbar\joinrel\relbar\joinrel\lra}}\, \sHCp_\C \; $  we define the pair  $ \; (\Phi_+ , \phi) :=  \big( \Phi{|}_{G_+} , \Lie(\Phi) \big) \; $.
%
%
%
                                                     \par
   In the reverse direction, for a real structure  $ (\Phi_+,\phi) $  on  $ (G_+,\fg) $  we define  $ \; \Phi : \bG^\bullet \relbar\joinrel\lra \overline{\bG^\bullet} \; $  via the reverse equivalence  $ \; \sHCp_\C \,{\buildrel \cong \over {\relbar\joinrel\relbar\joinrel\lra}}\, \sgrps_\C \; $.  Using the explicit form of such an equivalence provided in  \cite{ga1}  or  \cite{ga2},  we only need to define  $ \Phi_A $   --- for each  $ \, A \in \salg_\C^\bullet \, $  ---   on special elements in  $ \, \bG^\bullet(A) := \bG(A) \, $  of the form  $ \, (1 + \xi\,X) \, $,  with  $ \, \xi \in A_\uno \, $, $ \, X \in \fg_\uno \, $;  then the recipe in  \cite{ga1,ga2}  for them prescribes  $ \,\; \Phi( 1 + \xi\,X ) \, := \, 1 + \widetilde{\xi} \, \phi(X) \; $.
\end{proof}


   In the next result we explain real structures for supergroups described as super-ringed spaces.

\medskip

\begin{proposition}  \label{phi-prop}
 Let\/  $ \bG  = \big( |G| \, , \cO_\bG \big) \, $,
be a complex algebraic supergroup,  $ \, G_+ = \big( |G| \, , \cO_\bG/\cJ \big) \, $  its reduced subgroup, and  $ \Phi_+ $  a real structure on  $ G_+ \, $.  Then there exists a bijection between
 \vskip3pt
   (i)\;  real standard, resp.\ graded, structures  $ \Phi $  on  $ \bG $  such that  $ \, \Phi{\big|}_{G_+} = \Phi_+ \; $;
 \vskip1pt
   (ii)\;  antilinear sheaf morphisms  $ \; {\big\{ \cO_\bG\big(\Phi_+^{-1}(U)\big) \!\longrightarrow \cO_\bG(U) \,\big\}} \; $  which are involutions on the even part and whose square is plus the identity, resp.\ minus the identity, on the odd part.
 \vskip3pt
 In particular, when  $ \bG $  is  {\sl affine},  a real structure on  $ \bG $  is equivalently given by an antilinear morphism  $ \, \C[\bG] \lra \C[\bG] \, $,  \,where  $ \C[\bG] $  is the superalgebra of global sections on  $ \bG \, $,  which reduces to  $ \Phi_+^* $  on the reduced algebra  $ \, \C[\bG]\big/J \, $.
\end{proposition}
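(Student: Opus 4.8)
The plan is to strip a real structure on $\bG$ down to sheaf-theoretic data and then match the axioms, keeping track throughout of whether one works in $\salg_\C^\st$ or $\salg_\C^\gr$ so that the standard/graded dichotomy is handled correctly. First I would fix the underlying topology: $\bG$ and $G_+$ share the topological space $|G|$, and the constraint $\Phi{\big|}_{G_+}=\Phi_+$ forces the underlying continuous involution of any admissible $\Phi$ to be $|\Phi_+|$. A morphism of complex super-ringed spaces $\Phi:\bG\to\bG$ with prescribed underlying map $|\Phi_+|$ is then nothing but its pullback on sections, i.e.\ a compatible family $\Phi^\#_U:\cO_\bG\big(\Phi_+^{-1}(U)\big)\to\cO_\bG(U)$; that $\Phi$ is a real (rather than holomorphic) morphism is encoded by the antilinearity of $\Phi^\#$, cf.\ Remark \ref{rmk: real-struct_(G_+,g)=C-linear/cplx-conj}. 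Since $\Phi^\#$ is a parity-preserving algebra morphism it preserves the ideal $\cJ$ generated by the odd sections, hence descends modulo $\cJ$, and the requirement $\Phi{\big|}_{G_+}=\Phi_+$ says exactly that this descent is $\Phi_+^*$. This sets up the correspondence between the data underlying (i) and (ii); it remains to match the defining conditions.

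The heart of the matter is to rewrite condition (a.1) of Definition \ref{def:re-str_sgroup2}, namely $\Phi^2=\id_{\bG^\R}$, as the stated sheaf condition, and here the functor-of-points viewpoint is indispensable. The action of $\Phi$ on $A$-points is $\Phi_A(g)=\widetilde{(\,\cdot\,)}\circ g\circ\Phi^\#$ for $g\in\bG(A)$ and $A\in\salg_\C^\bullet$, where $\widetilde{(\,\cdot\,)}$ is the conjugation of $A$, so that $\Phi_A^2(g)=\big(\widetilde{(\,\cdot\,)}\big)^2\circ g\circ(\Phi^\#)^2$. On even sections both $\big(\widetilde{(\,\cdot\,)}\big)^2$ and the sought $(\Phi^\#)^2$ are the identity, whence $\Phi_A^2=\id$ there with no further condition; on odd sections $\big(\widetilde{(\,\cdot\,)}\big)^2$ contributes the sign $\epsilon_A=+1$ in $\salg_\C^\st$ and $\epsilon_A=-1$ in $\salg_\C^\gr$ --- this is precisely Definition \ref{super-str1}(2.s)--(2.g) for the conjugation of the test algebra $A$ --- while $(\Phi^\#)^2$ contributes a sign $\epsilon_s$. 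Therefore $\Phi^2=\id_{\bG^\R}$ holds in $\salg_\C^\bullet$ if and only if $\epsilon_s=\epsilon_A$, that is iff $\Phi^\#$ is an involution on the even part and squares to $+\id$, resp.\ $-\id$, on the odd part in the standard, resp.\ graded, case. I expect this sign bookkeeping to be the principal obstacle: one and the same $\Phi$ satisfies $\Phi^2=\id$ and yet has $(\Phi^\#)^2=-\id$ on odd sections in the graded case, the two minus signs cancelling through the graded conjugation of $A$; and one must confirm consistency with condition (a.2), i.e.\ that the type of $\Lie(\Phi)$ read off via Lemma \ref{lem: eq_restr-Lie(G)} and Theorem \ref{eq-real} matches $\epsilon_s=\phi^2{\big|}_{\fg_\uno}$ through the identification of the odd cotangent generators of $\cO_\bG$ with $\fg_\uno^*$.

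Next I would verify both directions. Given sheaf data as in (ii), the formula $\Phi_A(g)=\widetilde{(\,\cdot\,)}\circ g\circ\Phi^\#$ produces a natural transformation $\Phi$ over the topological involution $|\Phi_+|$; the computation above gives $\Phi^2=\id$, and reading off $\Lie(\Phi)$ from the action of $\Phi^\#$ on the odd cotangent generators yields a real structure $\phi$ on $\fg$ with $\phi^2{\big|}_{\fg_\uno}=\epsilon_s$, of the standard, resp.\ graded, type by Theorem \ref{eq-real}; thus $\Phi$ meets Definition \ref{def:re-str_sgroup2}, the pair $(\Phi_+,\phi)$ being the associated sHCp real structure of Proposition \ref{prop:eq_defs-rst_sgrps-sHCp's}. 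Conversely $\Phi^\#$ is recovered from $\Phi$ by pullback, the two assignments are mutually inverse, and compatibility with the comultiplication is an identity of algebra morphisms preserved on both sides, so $\Phi$ is a morphism of supergroups and not merely of superspaces.

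Finally, when $\bG$ is affine the structure sheaf is determined by its global sections $\C[\bG]$ and sheaf morphisms correspond to endomorphisms of the Hopf superalgebra $\C[\bG]$; the family in (ii) then collapses to a single antilinear morphism $\C[\bG]\to\C[\bG]$ with square $+\id$, resp.\ $-\id$, on the odd part, reducing to $\Phi_+^*$ on $\C[\bG]\big/J=\C[G_+]$, which is the asserted description.
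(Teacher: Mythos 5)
Your argument is correct in substance, but it takes a genuinely different route from the paper's. The paper's proof passes entirely through the super Harish--Chandra pair: it invokes Proposition \ref{prop:eq_defs-rst_sgrps-sHCp's} to trade $\Phi$ for the pair $(\Phi_+,\phi)$, then uses the explicit Koszul-type description $\cO_\bG(U)=\Hom_{\mathfrak{U}(\fg_0)}\big(\mathfrak{U}(\fg),\cO_{G_+}(U)\big)$ of \eqref{eq: struct-sheaf_G} to \emph{construct} the sheaf morphism of (ii) as $f\mapsto{\big(\Phi_+^*\big)}_U\circ f\circ\mathfrak{U}(\phi)$; the sign dichotomy on the odd part of $\cO_\bG$ then comes for free from $\phi^2\big|_{\fg_\uno}=\pm\,\mathrm{id}$ acting through $\mathfrak{U}(\phi)$ on odd monomials, and the inverse construction is left to the reader. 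You instead bypass $\mathfrak{U}(\fg)$ entirely: you identify $\Phi$ with its pullback $\Phi^\#$ as a morphism of (real) super-ringed spaces over the fixed topological involution $|\Phi_+|$, and extract the sign conditions from the involutivity of $\Phi$ on $A$-points via $\Phi_A^{\,2}=\widetilde{(\,\cdot\,)}^{\,2}\circ(\,\cdot\,)\circ(\Phi^\#)^2$. What your route buys is transparency on the one genuinely delicate point --- that in the graded case $\Phi^2=\mathrm{id}$ coexists with $(\Phi^\#)^2=-\mathrm{id}$ on odd sections because the graded conjugation of the test algebra contributes the compensating sign (this is exactly how the paper's \eqref{eq: Phi(g)-descr_via-sHCp} squares to the identity, and your reading of condition (a.1) through $\salg_\C^\bullet$ is the only one consistent with the rest of the paper); what it costs is that the map from (i) to (ii) is less explicit, and you must separately tie $\Lie(\Phi)$ to the action of $\Phi^\#$ on the odd cotangent generators to recover condition (a.2), where the paper gets this identification built in. Two small points to tighten: in (ii) the phrase ``sheaf morphisms'' must implicitly carry compatibility with the supergroup (Hopf) structure, as in Definition \ref{super-str1} and Remark \ref{phi-rem} --- you mention this only in passing; and on even sections the involutivity of $(\Phi^\#)^2$ is a condition to be \emph{matched} in both directions, not something that holds ``with no further condition.''
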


\begin{proof}
 We give just a sketch of the argument (for more details, see  \cite{dm, cfk1}).
  By  Proposition \ref{prop:eq_defs-rst_sgrps-sHCp's},  $ \bG $  corresponds to the sHCp  $ (G_+,\fg) $   --- where  $ \, \fg = \Lie(\bG) \, $  as usual ---   and any real structure  $ \Phi $  on  $ \bG $  as in  {\it (i)\/} corresponds to a real structure  $ (\Phi_+,\phi) $  on the sHCp  $ (G_+,\fg) \, $.  In this setup, the structure sheaf  $ \cO_\bG $  of  $ \bG $  can be described  (cf.\ \cite{cf})  as
\begin{equation}  \label{eq: struct-sheaf_G}
  \qquad   \cO_\bG(U)  \; = \;  \Hom_{\mathfrak{U}(\fg_0)}\big(\, \mathfrak{U}(\fg) \, , \cO_{G_+}(U) \big)   \qquad  \text{for all open  $ U $  in\ }  G_+
\end{equation}
 Now, starting from a real structure  $ \Phi $  on  $ \bG $  as in  {\it (i)},  hence from a real structure  $ (\Phi_+,\phi) $  on  $ (G_+,\fg) \, $,  note that the antilinear morphism  $ \; \phi : \fg \lra \fg \; $  extends uniquely to an antilinear morphism  $ \; \mathfrak{U}(\phi) : \mathfrak{U}(\fg) \lra \mathfrak{U}(\fg) \, $.  For each open  $ U $  in  $ G_+ \, $,  this gives a map  $ \; f \mapsto \! {\big(\Phi_+^*\big)}_U \circ f \circ \, \mathfrak{U}(\phi) \; $
%
%
 from  $ \; \cO_{\bG}(U) := \Hom_{\mathfrak{U}(\fg_0)}\big( \mathfrak{U}(\fg) \, , \cO_{G_+}(U) \!\big) \; $  to  $ \; \cO_{\bG}\big(\Phi_+^{-1}(U)\!\big) := \Hom_{\mathfrak{U}(\fg_0)}\big( \mathfrak{U}(\fg) \, , \cO_{G_+\!}\big(\Phi_+^{-1}(U)\big) \hskip-2pt\big) \; $   --- where  $ \; {\big\{ {\big(\Phi_+^*\big)}_U : \cO_{G_+}(U) \lra \cO_{G_+\!}\big(\Phi_+^{-1}(U)\big) \big\}}_U \; $  is
 the built-in, antilinear sheaf morphism.
                                                                   \par
   The construction of the inverse map is left to the reader.
\end{proof}

\medskip

\begin{remark}  \label{phi-rem}
 Let  $ \bG $  be an  {\sl affine\/}  complex (algebraic or Lie) supergroup, and let  $ \C[\bG] $  be the corresponding Hopf superalgebra.  Then  Proposition \ref{phi-prop}  guarantees that any (generalized) real structure on  $ \bG \, $,  say  $ \Phi \, $,  bijectively corresponds to a (generalized) real structure on the Hopf superalgebra  $ \C[\bG] $   --- cf.\ Definition \ref{super-str1};  we denote this last structure by  $ \, \varphi^{-1} : \overline{\C[\bG]} \relbar\joinrel\lra \C[\bG] \, $.
                                                                      \par
   As now  $ \bG $  is affine, its functor of points is representable, and we can describe it in detail.  Identifying  $ \bG $  with its functor of points, and the real structure  $ \Phi $  with a natural transformation
 $ \, {\big\{\, \bG^\bullet(A) \,{\buildrel \Phi_A \over {\relbar\joinrel\lra}}\, \overline{\bG^\bullet}(A) \,\big\}}_{A \in \salg^\bullet_\C} \, $,
 the real structure  $ \, \C[\bG] \,{\buildrel \varphi \over {\relbar\joinrel\lra}}\, \overline{\C[\bG]} = \C\big[\,\overline{\bG}\,\big] \, $  corresponding to  $ \Phi $  is given by  $ \, \varphi := {\Phi_{\C[\bG]}\big( \text{\sl id}_{\C[\bG]} \big)}^{-1} \, $.  Conversely, given  $ \, \C[\bG] \,{\buildrel \varphi \over {\relbar\joinrel\lra}}\, \overline{\C[\bG]} = \C\big[\,\overline{\bG}\,\big] \, $,  the correspon\-ding real structure  $ \Phi $  on  $ \bG $  is given (as a natural transformation) by  $ \, \Phi_A := (-) \circ \varphi^{-1} \, $,  i.e.
  $$  \displaylines{
   \Phi_A : \, \bG^\bullet(A) = \bG(A) := \Hom_{\salg_\C}\big( \C[\bG] \, , A \big) \relbar\joinrel\longrightarrow \Hom_{\salg_\C}\big( \C\big[\,\overline{\bG}\,\big] \, , A \big) =: \overline{\bG}(A) = \overline{\bG^\bullet}(A)  \cr
   \qquad \qquad \quad   g_{{}_A}
\,\raise1,35pt\hbox{$ \scriptscriptstyle | $}\hskip-5,6pt\relbar\joinrel\relbar\joinrel\relbar\joinrel\relbar\joinrel\relbar\joinrel\relbar\joinrel\relbar\joinrel\relbar\joinrel\relbar\joinrel\relbar\joinrel\relbar\joinrel\relbar\joinrel\relbar\joinrel\relbar\joinrel\relbar\joinrel\longrightarrow\, \Phi_A(g_{{}_A}) \, := \, g_{{}_A} \!\circ \varphi^{-1}  }  $$
for all  $ \, A \in \salg_\C \, $,  taking into account   --- cf.\ Remark \ref{rmk: real-struct_(G_+,g)=C-linear/cplx-conj}  ---   that any real structure on  $ \bG $  can be seen as a special supergroup morphism from  $ \, \bG^\bullet \, $  to  $ \, \overline{\bG^\bullet} \, $
%
%
 (the complex-conjugate of  $ \, \bG^\bullet \, $).
                                                                      \par
%
%
   Now we modify the natural transformation  $ \; \Phi := {\big\{ \Phi_A \big\}}_{A \in \salg_\C^\bullet} : \bG^\bullet \!\relbar\joinrel\lra \overline{\bG^\bullet} \; $  above, by setting  $ \; \Phi^\bullet_A := \widetilde{(\ )}_A \circ \Phi_A \; $  for all  $ \, A \in \salg_\C^\bullet \, $,  \,that is in detail
\begin{equation}  \label{eq: Z_4-action on G(A)}
  \Phi^\bullet_{\!A}(g_{{}_A}) := \widetilde{(\ )}_A \circ g_{{}_A} \!\circ \varphi^{-1}  \qquad \forall \;\; g_{{}_A} \in \bG^\bullet(A) := \Hom_{\salg_\C}\big( \C[\bG] \, , A \big)
\end{equation}
since  $ \, \bG^\bullet(A) := \bG(A) \, $.  Note that  $ \, \Phi^\bullet_{\!A}(g_{{}_A}) \in \Hom_{\salg_\C}\big( \C[\bG] \, , A \big) =: \bG^\bullet(A) \, $  since each  $ \Phi^\bullet_{\!A}(g_{{}_A}) $  is now  {\sl  $ \C $--linear},  so  $ \, \Phi^\bullet_{\!A} \, $  is a group morphism from  $ \bG^\bullet(A) $  to  $ \bG^\bullet(A) \, $.  All these  $ \Phi^\bullet_{\!A} $'s  define a natural transformation  $ \, \Phi^\bullet := {\big\{ \Phi^\bullet_{\!A} \big\}}_{A \in \salg_\C^\bullet} \, $  from  $ \bG^\bullet $  to itself: in the following, whenever  $ \bG $  is affine by  {\sl real structure on\/} $ \bG^\bullet $  we shall mean exactly this supergroup endomorphism  $ \, \Phi^\bullet : \bG^\bullet \relbar\joinrel\lra \bG^\bullet \, $.
\end{remark}

\medskip

\subsection{Real forms of supergroups}

   We now turn to the definition of (generalized) real forms for supergroups.

\smallskip

\begin{definition}  \label{def:re-form_sgrps_1}
   Let  $ \, (\bG,\Phi) $  be a complex supergroup with real structure, and  $ \, \bG^\bullet := \bG \circ \cF \, $  as above.
 We define  {\it (generalized) real form functor\/}  (``standard/graded'', according to  $ \Phi $)  of  $ \, (\bG,\Phi) \, $,  or  {\sl ``real form functor of\/  $ \bG $  with respect to\/  $ \Phi $''},  the subgroup functor  $ \bG^\Phi $  of  $ \bG^\bullet $
%
%
 defined by
 \vskip-5pt
  $$  \bG^\Phi : \salg_\C^\bullet \!\longrightarrow \grps \; ,  \quad  A \, \mapsto \, \bG^\Phi(A) := {\bG^\bullet(A)}^{\Phi^\bullet_{\!A}} \; ,  \;\;\; \bG^\Phi(f) := f{\big|}_{{\bG^\bullet(A)}^{\Phi^\bullet_{\!A}}}  $$
--- for every  $ \, A \, , B \in \salg_\C^\bullet \, $,  $ \, f \in \Hom_{\salg_\C^\bullet}(A\,,B\,) \, $  ---   where we denote by
  $$  {\bG^\bullet(A)}^{\Phi^\bullet_{\!A}}  \,\; := \;\, \big\{\, g \! \in \! \bG^\bullet(A) \,\big|\, \Phi^\bullet_{\!A}(g) = g \,\big\}  $$
 the subgroup of  $ \Phi^\bullet_{\!A} $--invariants.  Hereafter we are tacitly  {\sl identifying}   --- as it is always possible, by general theory ---   the abstract groups  $ \, \bG^\bullet(A) := \bG(A) \, $  and  $ \, \overline{\bG^\bullet}(A) := \overline{\bG}(A) \, $.
\end{definition}

\smallskip

\begin{proposition}  \label{prop: repr-G^Phi}
 Let\/  $ \bG $  be an affine complex supergroup with (generalized) real structure  $ \Phi \, $.  Then the functor\/  $ \bG^\Phi $  is representable.
\end{proposition}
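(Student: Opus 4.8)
The plan is to show that $\bG^\Phi$ is representable by exhibiting it as a product of an ordinary real form and a purely-odd real affine superspace, and then invoking the representability results already established (Proposition \ref{gr-rep1} and Proposition \ref{re-forms-svs}). The strategy rests on the sHCp description: since $\bG$ corresponds to the pair $(G_+,\fg)$ and $\Phi$ to the pair $(\Phi_+,\phi)$ by Proposition \ref{prop:eq_defs-rst_sgrps-sHCp's}, I would decompose the fixed-point functor $\bG^\Phi$ according to the factorization of $A$-points coming from the explicit form of the equivalence in \cite{ga1,ga2}, where a generic element of $\bG^\bullet(A)$ is written as a product of an element of $G_+(A)$ and a unipotent-type exponential factor built from $\fg_\uno$. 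The $\Phi^\bullet_{\!A}$-invariance condition should then split into a condition on the $G_+$-factor (governed by $\Phi_+$) and a condition on the odd factor (governed by $\phi{\big|}_{\fg_\uno}$).

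First I would recall the explicit formula $\Phi^\bullet_{\!A}(g_{{}_A}) = \widetilde{(\ )}_A \circ g_{{}_A} \circ \varphi^{-1}$ from equation \eqref{eq: Z_4-action on G(A)}, together with the recipe $\Phi(1+\xi X) = 1 + \widetilde{\xi}\,\phi(X)$ from the proof of Proposition \ref{prop:eq_defs-rst_sgrps-sHCp's}. Using these, I would compute the fixed points on a generic $A$-point. The even part of the fixed-point set reproduces exactly the classical real form $G_+^{\Phi_+}(A_\zero)$, which is representable as an ordinary affine group scheme. The odd part reduces to the fixed-point condition $\widetilde{\xi} = \pm\,\phi\text{-conjugate}$ on the coordinates coming from $\fg_\uno$; comparing with the computation of $\A_{\bullet,\C}^{0|d_1}(A)$ carried out explicitly in Subsection \ref{sec_affine-real-spaces}, this is precisely the functor of points of the real affine superspace $\A_{\bullet,\C}^{0|d_1}$, which is representable by Proposition \ref{gr-rep1}.

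Next I would assemble these two pieces. Because representability is preserved under products of functors (the representing object of a product being the tensor product of the representing objects over the base), the identification
$$
\bG^\Phi(A) \;=\; \big( G_+^{\Phi_+} \times \A_{\bullet,\C}^{0|d_1} \big)(A)
$$
for all $A \in \salg_\C^\bullet$ immediately yields that $\bG^\Phi$ is representable, with representing superalgebra the tensor product of the (real) coordinate Hopf algebra of $G_+^{\Phi_+}$ with the representing superalgebra $S(V^*)$ of $\A_{\bullet,\C}^{0|d_1}$. This is exactly the content of Theorem A as stated in the introduction, so the present proposition is the representability half of that result.

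The main obstacle, I expect, is verifying that the product decomposition of $A$-points is genuinely compatible with $\Phi^\bullet_{\!A}$ — that is, that the even and odd conditions really decouple and that no cross-terms obstruct the factorization. This requires care with the sHCp normalization of the global chart: the element must be written in a fixed order (even factor times odd exponential), and one must check that $\Phi^\bullet_{\!A}$ preserves this ordered form rather than scrambling the factors. The adjoint $G_+$-action on $\fg_\uno$ intertwines with $\Phi_+$ and $\phi$, and confirming that this intertwining is compatible with condition {\it (c)} of Definition \ref{def:sr-str_sHCp} (namely $\big(\Lie(\Phi_+)\big)_{1_{G_+}} = \phi{\big|}_{\fg_\zero}$) is the delicate point. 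Once this compatibility is secured, the representability follows formally, so I would concentrate the technical effort there and treat the product representability as routine.
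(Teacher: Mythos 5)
Your argument is correct, but it is not the route the paper takes for this proposition. The paper's own proof is much more direct and does not touch the sHCp decomposition at all: since $\bG$ is affine, it is represented by the Hopf superalgebra $\C[\bG]$, and by Proposition \ref{phi-prop} the real structure $\Phi$ corresponds to a real structure $\varphi$ on $\C[\bG]$, so that $\big(\C[\bG],\varphi\big) \in \salg_\C^\bullet$. Unwinding the fixed-point condition $\Phi^\bullet_{\!A}(g) = g$, i.e.\ $\widetilde{g(u)} = g(\varphi(u))$ for all $u \in \C[\bG]$, shows that $\bG^\Phi(A)$ is exactly $\Hom_{\salg_\C^\bullet}\big(\C[\bG], A\big)$ --- the morphisms preserving the real structures on both sides --- so $\bG^\Phi$ is represented by $\big(\C[\bG],\varphi\big)$ in one line. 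Your proposal instead anticipates the proof of Theorem \ref{real-form_G} (Theorem A): you decompose $A$-points via the sHCp factorization $g = g_+ \cdot \exp(\cY)$, split the invariance condition into the even part (giving $G_+^{\Phi_+}$) and the odd part (giving $\A_{\bullet,\C}^{0|d_1}$), and deduce representability from the product. That works --- the compatibility you flag as the delicate point is exactly what formula \eqref{eq: Phi(g)-descr_via-sHCp} encodes, and the paper verifies it later when proving Theorem \ref{real-form_G} --- but it buys the extra structural information of the explicit direct factorization at the cost of invoking the specific quasi-inverse functor of \cite{ga2} and its uniqueness-of-factorization property, none of which is needed for bare representability. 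In short: your proof is a valid but heavier argument that establishes the stronger later theorem en route; the paper's proof of this particular proposition is the lightweight abstract one.
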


\begin{proof}
 As  $ \bG $  is affine, let  $ \, \C[\bG] \in \salg_\C \, $  be the Hopf superalgebra representing it, as  a functor from  $ \salg_\C $  to  $ \grps \, $:  then by  Proposition \ref{phi-prop},  there exists a real structure  $ \, \varphi : \C[\bG]  \relbar\joinrel\lra \C[\bG] \, $   --- which corresponds uniquely to  $ \Phi $  ---   so that  $ \, \big(\C[\bG] \, , \varphi \big) \in \salg_\C^\bullet \, $.  Now  Definition \ref{def:re-form_sgrps_1}  together with Remark \ref{phi-rem}  yield   --- for every  $ \, A \in \salg_\C^\bullet \, $  ---
  $$  \displaylines{
   {\bG(A)}^{\Phi_{\!A}^\bullet}  := \,  \big\{\, g \in \bG^\bullet(A) \,\big|\, \Phi_{\!A}^\bullet(g) \! = g \,\big\}_{{}_{\phantom{|}}}  \! = \,  \big\{\, g \in \bG^\bullet(A) \,\big|\, \widetilde{(\ )}_A \circ g \circ \varphi^{-1} \! = g \,\big\}_{{}_{\phantom{|}}}  \! =   \hfill  \cr
   \hfill   = \;  \big\{\, g \in \bG^\bullet(A) \,\big|\, \widetilde{g(u)} = g\big(\varphi(u)\big) \, ,  \; \forall \; u \in \C[\bG] \big\}  \; = \;  \Hom_{\salg_\C^\bullet}\big( \C[\bG] \, , A \,\big)  }  $$
\noindent
 because the condition  $ \, \widetilde{g(u)} = g\big(\varphi(u)\big) \, $   --- for  $ \, u \in \C[\bG] \, $ ---   means that the superalgebra morphism  $ \, g : \C[\bG] \relbar\joinrel\lra A \, $  preserves the real structure on both sides, hence  $ \, g \in \Hom_{\salg_\C^\bullet}\big( \C[\bG] \, , A \big) \, $.
\end{proof}

\smallskip

\begin{observation}
%
%
%
 Let us consider a  {\sl standard\/}  real structure  $ \Phi $ on a complex  {\sl affine\/}  supergroup  $ \bG \, $,  i.e.\  $ \, (\bG, \Phi) \in \sgrps_\C^\st \, $,  and let  $ \varphi $  be the corresponding real structure on  $ \C[\bG] \, $.  As  $ \Phi $  is standard, the same is true for  $ \varphi $  as well: then each  $ \, f \in \C[\bG] \, $  has a unique splitting as  $ \, f = f_+ + i \, f_- \, $  with  $ \, \varphi(f_\pm) = f_\pm \, $.  Using this, the relation  \eqref{eq: Z_4-action on G(A)}  and   --- for all  $ \, A \in \salg_\C^\st \, $  ---   the identity  $ \; \Hom_{\salg_\C^\st}\big(\, \C[\bG] \, , A \,\big)  \, = \,  {\Hom_{\salg_\C}\big(\, \C[\bG] \, , A \,\big)}^{\Phi_{\!A}^\bullet} \; $  and  Proposition \ref{prop: repr-G^Phi},  one finds that
  $$  \bG^\Phi\big( \C \otimes_\R R \,\big)  \,\; := \;\,  \Hom_{\salg_\C^\st}\big(\, \C[\bG] \, , \C \otimes_\R R \,\big)  \,\; = \;\,  \Hom_{\salg_\R}\big(\, {\C[\bG]}^\varphi , R \,\big)  $$
for all  $ \, R \in \salg_\R \, $.  This gives us a description of the  {\sl real\/}  supergroup functor  $ \; R \mapsto \bG^\Phi\big( \C \otimes_\R R \,\big) \; $   --- for all  $ \, R \in \salg_\R \, $  ---   which is the real form  $ \bG^\Phi $  (of  $ \bG \, $)  when seen as a  {\sl real\/}
supergroup.
                                                             \par
   This is the analog, in some sense, of Proposition \ref{re-forms-svs}  for super vector spaces.
\end{observation}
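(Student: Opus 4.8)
The plan is to reduce the statement to the classical restriction/extension-of-scalars adjunction between real and complex superalgebras, exploiting the fact that in the \emph{standard} case the antilinear map $\varphi$ on $\C[\bG]$ is a genuine involution, so that $ {\C[\bG]}^\varphi $ is an honest real form of $\C[\bG]$. First I would record that condition \emph{(2.s)} forces $\varphi^2 = \id$ on all of $\C[\bG]$, so each $f \in \C[\bG]$ splits uniquely as $f = f_+ + i\,f_-$ with $\varphi(f_\pm) = f_\pm$; this is exactly the splitting quoted in the statement, and it yields the identification $\C[\bG] = \C \otimes_\R {\C[\bG]}^\varphi$ of the complex superalgebra as the complexification of its real fixed subalgebra. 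Since $\varphi$ is moreover a morphism of Hopf superalgebras (Remark \ref{phi-rem}, via Definition \ref{super-str1}), $ {\C[\bG]}^\varphi $ inherits a structure of real Hopf superalgebra, which is what makes $\bG^\Phi$ a bona fide real supergroup rather than a mere group-valued functor.

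Next I would invoke Proposition \ref{prop: repr-G^Phi}, or rather the identity extracted in its proof, namely $ \bG^\Phi(A) = {\bG^\bullet(A)}^{\Phi^\bullet_{\!A}} = \Hom_{\salg_\C^\st}\big( \C[\bG], A \big) $ for every $ A \in \salg_\C^\st $, and specialize it to $ A = \C \otimes_\R R $. Here $\C \otimes_\R R$ carries the evident standard real structure given by complex conjugation on the scalar factor, whose real points are $ {(\C \otimes_\R R)}^\re = R $. The only point requiring genuine verification is then the adjunction
$$ \Hom_{\salg_\C^\st}\big( \C[\bG], \C \otimes_\R R \big) \,=\, \Hom_{\salg_\R}\big( {\C[\bG]}^\varphi, R \big) \,. $$
A morphism $g$ on the left is a $\C$-superalgebra map intertwining $\varphi$ with conjugation, so it sends $ {\C[\bG]}^\varphi $ into $ {(\C \otimes_\R R)}^\re = R $, and its restriction is an $\R$-superalgebra map $ {\C[\bG]}^\varphi \to R $. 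Conversely, by the first step any such $\R$-map extends uniquely and $\C$-linearly to a $\C$-map $ \C[\bG] \to \C \otimes_\R R $, and this extension automatically intertwines the two real structures because they already agree on the generating real subalgebra $ {\C[\bG]}^\varphi $. These assignments are mutually inverse, which gives the displayed equality and hence exhibits $ R \mapsto \bG^\Phi\big( \C \otimes_\R R \big) $ as the affine real supergroup represented by $ {\C[\bG]}^\varphi $.

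The main obstacle I anticipate is bookkeeping rather than conceptual: one must track carefully the correction $ \Phi^\bullet_{\!A} = \widetilde{(\ )}_A \circ \widehat{\Phi}^\bullet_{\!A} $ introduced in Remark \ref{phi-rem}, so that the fixed-point condition $ \Phi^\bullet_{\!A}(g) = g $ translates precisely into $ \widetilde{g(u)} = g\big(\varphi(u)\big) $ for all $ u \in \C[\bG] $, and to confirm that this compatibility \emph{together with} the parity-preserving requirement is exactly what a morphism in $ \salg_\C^\st $ into $ \C \otimes_\R R $ amounts to, so that no $\Z_2$-grading information is lost when one passes to the real fixed subalgebra. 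Once this is in place, the result is the exact supergroup counterpart of Proposition \ref{re-forms-svs}\emph{(b)} for super vector spaces, and the argument is essentially a transcription of that proof at the level of function algebras.
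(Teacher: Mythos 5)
Your proposal is correct and follows essentially the same route as the paper, which justifies this Observation only by the inline sketch you have expanded: the unique splitting $f = f_+ + i\,f_-$ coming from $\varphi^2 = \mathrm{id}$, the identification $\bG^\Phi(A) = \Hom_{\salg_\C^\st}\big(\C[\bG],A\big)$ from Proposition \ref{prop: repr-G^Phi}, and the restriction/complexification adjunction specialized to $A = \C\otimes_\R R$, exactly as in Proposition \ref{re-forms-svs}\emph{(b)}. Your added remark that $\varphi$ being a Hopf superalgebra morphism endows ${\C[\bG]}^\varphi$ with a real Hopf superalgebra structure is a useful (and correct) clarification of why the resulting functor is a genuine real supergroup.
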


\smallskip

\subsection{Functor of points of real forms}  \label{sec: functor-pts-real-forms}

 In this section we describe in detail the real form of a supergroup, using the functor of points approach.  To begin with, we shortly recall the following.
                                              \par
   For the standard functor
 $ \; \cK : \sgrps_\C \!\relbar\joinrel\relbar\joinrel\lra \sHCp_\C \; $  we choose a specific quasi-inverse functor  $ \, \cH : \sHCp_\C \!\relbar\joinrel\relbar\joinrel\lra \sgrps_\C \, $,  namely the second one described in  \cite{ga2},  therein denoted by  $ \Psi^e $.  Via the latter, for every  $ \, \bG \in \sgrps_\C \, $  and  $ \, A \in \salg_\C \, $  the group  $ \bG(A) $  is described as
\begin{equation}  \label{eq: G-descr_via-sHCp}
  \bG(A)  \; = \;  G_+(A_\zero) \cdot \exp\!\big( A_\uno \!\otimes_\C \fg_\uno \big)  \; \cong \;  \Big( G_+ \times \mathbb{A}_\C^{0|d_\uno}\Big)(A)
\end{equation}
 where  $ \, \exp\!\big( A_\uno \!\otimes_\C \fg_\uno \big) \! := \! \big\{ \exp(\cY) \,\big|\, \cY \! \in \! A_\uno \!\otimes_\C \fg_\uno \big\} \, $,  $ \, d_\uno := \text{\sl dim}(\fg_\uno) \, $,  and the symbol  ``$ \, \cong \, $''  on the right just means that  $ \bG $  and  $ \, G_+ \times \mathbb{A}_\C^{0\,|d_\uno} \, $  are isomorphic as supermanifolds and as groups.  In particular, formula  \eqref{eq: G-descr_via-sHCp}  means that each  $ \, g \in \bG(A) \, $  has a unique expression of the form
\begin{equation}  \label{eq: g(elem)-descr_via-sHCp}
  g  \; = \;  g_+ \cdot \exp(\cY)
\end{equation}
for some unique  $ \, g_+ \in G_+(A) \, $  and  $ \, \cY \in A_\uno \!\otimes_\C \fg_\uno \, $.  Now, let  $ \Phi $  be the chosen real structure on  $ \bG \, $,  and  $ \, \big( \Phi_+ \, , \phi \big) \, $  its corresponding real structure on  $ \, \big( G_+ \, , \fg \big) \, $;  then the action of  $ \Phi $  on  $ \, g \in \bG(A) \, $  reads   --- through  $ \eqref{eq: g(elem)-descr_via-sHCp} $,  and setting  $ \, \varphi_A := \widetilde{( - )} \otimes \phi \, $  (cf.\  Theorem \ref{eq-real})  ---   as follows:
\begin{equation}  \label{eq: Phi(g)-descr_via-sHCp}
  \Phi(g)  \; = \;  \Phi_+(g_+) \cdot \exp\!\big( \varphi_A(\cY) \big)
\end{equation}

\vskip5pt

   We are now ready for the main result in this section, which is Theorem A in Sec.\ \ref{intro}.

\vskip15pt

\begin{theorem}  \label{real-form_G}
 Let  $ \, \big(\bG,\Phi\big) \in \sgrps_\C^\bullet \, $.  Then the real form  $ \, \bG^\Phi $  of  $ \, \bG $  is explicitly described as
\begin{equation}  \label{eq: descr-G^Phi}
  \qquad   \bG^\Phi(A)  \; = \;  G_+^{\Phi_+}(A_\zero) \cdot \exp\!\Big( {( A_\uno \!\otimes_\C \fg_\uno)}^{\widetilde{(-)} \otimes \phi} \Big)   \qquad \qquad  \forall \;\; A \in \salg_\C^\bullet
\end{equation}
   Moreover, the factorization is  {\sl direct}:
each $ \; g \in \bG(A)$  has a  {\sl unique}  factorization of the form
  $$  g = g_+ \!\cdot \exp(\cY) \quad \hbox{with}  \quad \, g_+ \in G_+^{\Phi_+}(A_\zero) \quad \hbox{and}  \quad \cY \in {( A_\uno \!\otimes_\C \fg_\uno)}^{\widetilde{(-)} \otimes \phi}  $$
%
 In particular we have  $ \; \bG^\Phi \cong \, G_+^{\Phi_+} \! \times \mathbb{A}_{\bullet,\C}^{0\,|d_1} \, $  (cf.\ \S\ \ref{sec_affine-real-spaces}),  hence the functor  $ \, \bG^\Phi $  is  {\sl representable}.
%
\end{theorem}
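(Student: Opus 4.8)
The plan is to reduce the statement to the two explicit formulas \eqref{eq: G-descr_via-sHCp}--\eqref{eq: Phi(g)-descr_via-sHCp} already at our disposal, and then to let the \emph{uniqueness} of the super Harish--Chandra factorization do all the work. Recall from Definition \ref{def:re-form_sgrps_1} that $\bG^\Phi(A)$ is the fixed-point subgroup of the action of $\Phi$ on $\bG^\bullet(A)$, this action being described in the coordinates of \eqref{eq: g(elem)-descr_via-sHCp} by formula \eqref{eq: Phi(g)-descr_via-sHCp}. So the whole content of the theorem is to compute this fixed-point set after writing $g = g_+ \cdot \exp(\cY)$ with $g_+ \in G_+(A_\zero)$ and $\cY \in A_\uno \otimes_\C \fg_\uno$ the data uniquely attached to $g$ by \eqref{eq: g(elem)-descr_via-sHCp}.

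First I would observe that the two factors of $\Phi(g)$ in \eqref{eq: Phi(g)-descr_via-sHCp} lie in the correct pieces: $\Phi_+(g_+) \in G_+(A_\zero)$ because $\Phi_+$ is a real structure on the ordinary group $G_+$, and $\varphi_A(\cY) = \big(\widetilde{(-)} \otimes \phi\big)(\cY) \in A_\uno \otimes_\C \fg_\uno$ because $\varphi_A$ is parity-preserving (condition {\it (2)} of Definition \ref{gr-str2}). Hence \eqref{eq: Phi(g)-descr_via-sHCp} already exhibits $\Phi(g)$ in the canonical factored form of \eqref{eq: g(elem)-descr_via-sHCp}. By the uniqueness of that factorization, the fixed-point equation $\Phi(g) = g$ is therefore \emph{equivalent} to the conjunction of the two decoupled conditions $\Phi_+(g_+) = g_+$ and $\varphi_A(\cY) = \cY$. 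The first says exactly $g_+ \in G_+^{\Phi_+}(A_\zero)$, the ordinary real form of $G_+$; the second says $\cY \in (A_\uno \otimes_\C \fg_\uno)^{\widetilde{(-)} \otimes \phi}$. This yields formula \eqref{eq: descr-G^Phi}, and the uniqueness of the factorization $g = g_+ \cdot \exp(\cY)$ for elements of $\bG^\Phi(A)$ is inherited verbatim from the uniqueness in \eqref{eq: g(elem)-descr_via-sHCp}.

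It then remains to read off representability. The functor $A_\zero \mapsto G_+^{\Phi_+}(A_\zero)$ is the functor of points of the classical real form of the complex algebraic group $G_+$, which is representable. The odd factor $A \mapsto (A_\uno \otimes_\C \fg_\uno)^{\widetilde{(-)} \otimes \phi}$ is precisely $\cL_{\fg_\uno}^{\,\varphi}(A)$ for the purely odd space $V = \fg_\uno$ equipped with the real structure $\phi|_{\fg_\uno}$, since $\cL_{\fg_\uno}(A) = A_\uno \otimes_\C \fg_\uno$. By Proposition \ref{gr-rep1} this functor is represented by $S(\fg_\uno^*)$, and by \S\,\ref{sec_affine-real-spaces} it is exactly the real affine superspace $\mathbb{A}_{\bullet,\C}^{0|d_1}$ with $d_1 = \dim \fg_\uno$ and $\bullet$ the type of $\phi$ (the standard case being governed by Proposition \ref{re-forms-svs}, the graded case by the $\xi_i^\pm$ description of \S\,\ref{sec_affine-real-spaces}). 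Since the factorization is direct and natural in $A$, the assignment $g \mapsto (g_+,\cY)$ furnishes a natural isomorphism $\bG^\Phi \cong G_+^{\Phi_+} \times \mathbb{A}_{\bullet,\C}^{0|d_1}$; as a product of representable functors is representable, this gives the final claim.

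I expect the only genuine obstacle to be the clean decoupling of the fixed-point equation, i.e.\ the verification that $\Phi(g)$ is \emph{already} in canonical factored form so that uniqueness may be invoked to separate the two conditions. This hinges on parity-preservation of $\varphi_A$ together with $\Phi_+$ preserving $G_+$; once these are in place the conditions on $g_+$ and on $\cY$ separate automatically and no cross terms appear. A secondary point requiring care is the graded case, where---as noted in the Introduction---the fixed points cannot be seen over $\C$, so the identification of the odd factor with $\mathbb{A}_{\gr,\C}^{0|d_1}$ must be carried out functorially through \S\,\ref{sec_affine-real-spaces} rather than by exhibiting $\C$-points; but this is already packaged in the notation $\mathbb{A}_{\bullet,\C}^{0|d_1}$ and needs no further argument here.
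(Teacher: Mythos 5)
Your proposal is correct and follows essentially the same route as the paper's own proof: both write $g = g_+ \cdot \exp(\cY)$ via \eqref{eq: g(elem)-descr_via-sHCp}, apply \eqref{eq: Phi(g)-descr_via-sHCp} to see that $\Phi(g)$ is again in canonical factored form, and invoke uniqueness of that factorization to decouple the fixed-point equation into $\Phi_+(g_+)=g_+$ and $\varphi_A(\cY)=\cY$. Your write-up is in fact slightly more explicit than the paper's on the two points it leaves implicit --- that the decoupling rests on parity-preservation of $\varphi_A$ plus uniqueness of the sHCp factorization, and that representability of the odd factor comes from Proposition \ref{gr-rep1} applied to $V=\fg_\uno$ --- but these are elaborations, not a different argument.
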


\begin{proof}
   Given  $ \, g \in \bG(A) \, $,  with factorization  $ \, g = g_+ \!\cdot \exp(\cY) \, $  as in  \eqref{eq: g(elem)-descr_via-sHCp},  by  \eqref{eq: Phi(g)-descr_via-sHCp}  we have
  $$  g \in \bG^\Phi(A)  \;\; \iff \;\;  \Phi(g) = g  \;\; \iff \;\;  \Phi_+(g_+) \cdot \exp\!\big(\varphi_A(\cY)\big) \; = \; g_+ \!\cdot \exp(\cY)  $$
and the rightmost condition is equivalent to  $ \; \Phi_+(g_+) = g_+ \, $  {\sl together with}  $ \, \exp\!\big(\varphi_A(\cY)\big) = \exp(\cY) \; $,  i.e.\  $ \; \Phi_+(g_+) = g_+ \, $  {\sl and}  $ \, \varphi_A(\cY) = \cY \; $,  which means $ \; g_+ \in G_+^{\Phi_+}(A) \, $  {\sl and}  $ \, \cY \in {( A_\uno \!\otimes_\C \fg_\uno)}^{\varphi_A} \; $.  Then
  $$  g = g_+ \!\cdot \exp(\cY) \, \in \, \bG^\Phi(A)  \quad \iff \quad
   \begin{cases}
      \;\; g_+ \in G_+^{\Phi_+}(A)  \\
      \;\; \cY \in {( A_\uno \!\otimes_\C \fg_\uno)}^{\varphi_A}
   \end{cases}  $$
as claimed.  Moreover, the factorization  $ \, g = g_+ \!\cdot \exp(\cY) \, $  is unique by construction.
%
%
\end{proof}

\alert{
  We end this section with a remark regarding the more general setting
  of supermanifolds, that we shall not pursue directly in this paper.

  \begin{remark}
    The sheaf theoretic characterization of standard and graded real
    forms of a supergroup as in  Proposition \ref{phi-prop}  can be extended, almost
    immediately, to give a well posed more general definition of real forms
    (standard and graded) of supermanifolds.
    \end{remark}}

\section{Hermitian forms and unitary Lie superalgebras}

 We introduce now a suitable notion of  {\sl unitary Lie superalgebra},  which is a special real form of  $ \fgl(V) $  associated with a Hermitian form
on the superspace  $ V \, $.

\smallskip

\subsection{Super Hermitian Forms}

%
%
 \label{super-Herm-form}
%
%
 We begin with the definition of  \textit{super Hermitian form\/}  on a complex super vector space  $ V \, $:  this is a map  $ \, B : V \times V \relbar\joinrel\lra \C \, $  which is  $ \C $--linear in the first entry,  $ \C $--antilinear  in the second entry, and such that
  $$  B(x,y)  \; =\;  {(-1)}^{|x||y|} \, \overline{B(y,x)}   \qquad \qquad  \forall \;\; x \, , y \in \big( V_\zero \cup V_\uno \big)  $$
 In addition, we say that  $ B $  is  \textit{consistent\/}  if  $ \, B(x,y) = 0 \, $  for any homogeneous $ x $  and  $ y $  of different parity  (see \cite{vsv},  pg.\ 112, for more details).  From now on we assume  $ B $  to be consistent.

\medskip

   We can write any consistent super Hermitian form  $ B $  as  $ \, B = B_\zero + i \, B_\uno \, $,  where each  $ \, B_{\overline{z}} := {(-i)}^{\overline{z}} \, B{\Big|}_{V_{\overline{z}} \times V_{\overline{z}}} \, $  is an Hermitian form (in the classical, non-super sense) on the vector space  $ V_{\overline{z}} \, $,  for  $ \, \overline{z} \in \Z_2 \, $.  Notice then that  $ \, B' = B_\zero - i \, B_\uno \, $  is also another super Hermitian form on  $ V \, $.
 \smallskip
   We say that  $ B $  is  \textit{non degenerate\/}  if both the  $ B_{\overline{z}} $'s  are non degenerate; similarly,  $ B $  is  \textit{positive definite\/}  if both the  $ B_{\overline{z}} $'s  are positive definite: in this case we write  $ \, B > 0 \, $.  If instead  $ \, B_\zero > 0 \, $  and  $ \, B_\uno < 0 \, $,  then  $ B' $  (as defined above) is a positive definite super Hermitian form, instead of  $ B \, $.

\smallskip

\begin{example}\label{unit-ex0}
 Let  $ \, V := \C^{m|n} \, $.  We can define on  $ V $  two super Hermitian forms, say  $ B^+_V $  and  $ B^-_V \, $,  given by
\begin{equation}  \label{sHer-B_V}
  B^\pm_V\big( (z,\zeta) \, , \big(z',\zeta'\,\big)\big)  \; := \;\,  z \cdot \overline{z'} \, \pm \, i \, \zeta \!\cdot \overline{\zeta'}
\end{equation}
 where  $ \, z , z' \in \C^m \, $,  $ \, \zeta , \zeta' \in \C^n \, $,  while  $ \, z \cdot z' \, $  and  $ \, \zeta \!\cdot \zeta' \, $  are the usual
scalar products in  $ \C^m $  and  $ \C^n $.
\end{example}

\smallskip

   We recall also the notion of  {\sl supersymmetric (bilinear) form\/}  on a complex super vector space  $ V \, $:  it is a  $ \C $--bilinear  map  $ \; \langle\,\ ,\ \rangle : V \times V \lra \C \; $  such that
  $$  \langle x \, , y \rangle  \; =\;  {(-1)}^{|x||y|} \, \langle y \, , x \rangle   \qquad
\qquad  \forall \;\; x \, , y \in \big( V_\zero \cup V_\uno \big)  $$
 Again, we say that the form  $ \, \langle\,\ ,\ \rangle \, $  is  \textit{consistent\/}  if  $ \, \langle x \, , y \rangle = 0 \, $  for any homogeneous $ x $  and  $ y $  of different parity.  From now on we assume any such form  $ \, \langle\,\ ,\ \rangle \, $  to be consistent.

\medskip

  Now let  $ \phi $  be a real structure on  $ V $  and  $ \, \langle \,\ , \ \rangle \, $  be any  $ \C $--bilinear  form on  $ V \, $.  We say that  {\it the form  $ \, \langle \,\ , \ \rangle \, $  is  $ \phi $--invariant}   --- or just  {\it invariant}  ---   if it is a morphism of superspaces with real structures  (i.e.\ of  $ \Z_4 $--modules),  that is
 $ \,\; \overline{\langle\, v \,, w \,\rangle}  \, = \,  \big\langle \phi(v) \,, \phi(w) \big\rangle \;\, $  for all  $ \; v, w \in V \; $.
 Then we have the following link with Hermitian forms on  $ V \, $,  which follows by direct computation:

\medskip

\begin{proposition}  \label{herm-bil}
 Let  $ \, (V,\phi) \in \smod_\C^\bullet \, $  and let  $ \, \langle \,\ , \ \rangle \, $  be a  $ \phi $--invariant,  consistent, supersymmetric,  $ \C $--bilinear  form on  $ V \, $.
%
%
 Then
\beq  \label{herm-susy}
   \, B^\pm_\phi(x,y) := {(\pm i\,)}^{\nu_\phi \, |x|\,|y|} \, \big\langle x \, , \phi(y) \big\rangle  \qquad \text{with} \qquad
   \nu_\phi \, := \,
     \begin{cases}
   \; \zero  &  \! \text{if  $ \phi $ is standard} \\
   \; \uno  &  \! \text{if  $ \phi $ is graded}
     \end{cases}
\eeq
 defines two consistent super Hermitian forms  $ B^+_\phi $  and  $ B^-_\phi $  on  $ V $ (which coincide if\/  $ \phi $  is standard).
\end{proposition}

%
%

\smallskip

\begin{observation}
 When  $ \phi $  is {\sl graded},  we can write the super Hermitian form  $ B^\pm_\phi $  in  \eqref{herm-susy}  as
  $$  B^\pm_\phi(x,y)  \; = \;  \big\langle x_0 \,, \phi(y_0) \big\rangle \, \pm \, i \, \big\langle x_1 \,, \phi(y_1) \big\rangle  \; = \;  B_0(x_0,y_0) \, \pm \, i \, B_1(x_1\,,y_1)  $$
where  $ \, B_0(x_0,y_0) := \big\langle x_0 \,, \phi(y_0) \big\rangle \, $  and  $ \, B_1(x_1,y_1) := \big\langle x_1 \,, \phi(y_1) \big\rangle \, $  are ordinary Hermitian forms (in the classical, non-super sense) on  $ V_0 $  and  $ V_1 $  both considered as plain complex vector spaces   --- i.e.\ forgetting their super structure.  Similarly, if  $ \phi $  is  {\sl standard\/}  we can write  $ \, B_\phi := B^\pm_\phi \, $  as
  $$  B_\phi(x,y)  \; = \;  \big\langle x_0 \,, \phi(y_0) \big\rangle \, + \, i \, \big(\! -i \,
\big\langle x_1 \,, \phi(y_1) \big\rangle \big)  \; = \;  B_0(x_0,y_0) \, + \, i \, B_1(x_1\,,y_1)  $$
 where  $ \; B_0(x_0,y_0) := \big\langle x_0 \,, \phi(y_0) \big\rangle \; $  and  $ \; B_1(x_1,y_1) := -i \, \big\langle x_1 \,, \phi(y_1) \big\rangle \; $  are both ordinary Hermitian forms on  $ V_0 $  and  $ V_1 $  respectively (now seen as plain complex vector spaces).
\end{observation}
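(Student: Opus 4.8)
The plan is to read off the stated decomposition directly from the definition \eqref{herm-susy}, using consistency to separate even and odd contributions, and then to identify the two resulting pieces as ordinary Hermitian forms by appealing to Proposition \ref{herm-bil} (or, equivalently, redoing the sign computation hidden in its ``direct computation'' proof).

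First I would write $x = x_0 + x_1$ and $y = y_0 + y_1$ with $x_j, y_j \in V_j$, and expand $B_\phi(x,y)$ using $\C$-linearity in the first slot together with the $\C$-antilinearity of $\phi$ in the second. Since $\phi$ preserves parity, $\phi(V_j) = V_j$, and since $\langle\,\ ,\ \rangle$ is consistent, every mixed term $\langle x_i, \phi(y_j)\rangle$ with $i \neq j$ vanishes; hence only the blocks $(i,j) = (0,0)$ and $(1,1)$ survive and $B_\phi(x,y) = B_\phi(x_0,y_0) + B_\phi(x_1,y_1)$. It then remains to evaluate the scalar prefactor $(\pm i)^{\nu_\phi|x||y|}$ on each surviving block: on the even--even block $|x||y| = 0$, so the prefactor is $1$ in both the standard and the graded case; on the odd--odd block $|x||y| = 1$, so the prefactor is $1$ when $\phi$ is standard ($\nu_\phi = \zero$) and $\pm i$ when $\phi$ is graded ($\nu_\phi = \uno$). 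Substituting these values reproduces exactly the two displayed formulas and pins down the explicit expressions for $B_0$ and $B_1$.

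Next I would check that $B_0$ and $B_1$ so defined are genuine Hermitian forms on the plain complex vector spaces $V_0$ and $V_1$. Sesquilinearity (linear in the first entry, antilinear in the second) is immediate from the $\C$-bilinearity of $\langle\,\ ,\ \rangle$ and the $\C$-antilinearity of $\phi$, the constant prefactor being irrelevant. For the symmetry $B_j(u,v) = \overline{B_j(v,u)}$ the cleanest route is to invoke Proposition \ref{herm-bil}: as $B_\phi$ is already a consistent super Hermitian form, the general splitting recorded just before Example \ref{unit-ex0} guarantees that $B_\phi|_{V_0\times V_0}$ is an ordinary Hermitian form while $B_\phi|_{V_1\times V_1}$ is $i$ times one, and the Observation merely writes these two forms out as $B_0$ and $B_1$. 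Alternatively, one verifies the symmetry by hand from the hypothesis $\langle v,w\rangle = \overline{\langle \phi(v),\phi(w)\rangle}$, the supersymmetry relation $\langle\phi(u),v\rangle = (-1)^{|u||v|}\langle v,\phi(u)\rangle$, and $\phi^2|_{V_1} = \pm\,\id_{V_1}$.

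The only delicate point, and the step I expect to carry the real content, is the sign bookkeeping in this last symmetry check on the odd--odd block in the graded case, where three signs are simultaneously in play: the conjugate $\overline{(\pm i)} = \mp i$ of the prefactor, the sign in $\phi^2(x_1) = -x_1$, and the supersymmetry sign $(-1)^{|x_1||y_1|} = -1$. The claim to verify is that these combine coherently, so that the super-Hermitian identity for $B_\phi$ collapses to the unadorned relation $B_1(u,v) = \overline{B_1(v,u)}$ with no residual sign; one must confirm that the three factors cancel in pairs. The even--even block and the standard case are easier, since there at most one of these signs is active.
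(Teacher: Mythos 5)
Your proposal is correct and follows essentially the same route as the paper, which states this Observation without a written proof precisely because it is the direct unpacking of \eqref{herm-susy} and of the "direct computation" behind Proposition \ref{herm-bil}: split by parity using consistency, evaluate the prefactor $(\pm i)^{\nu_\phi|x||y|}$ on each block, and check the Hermitian symmetry of $B_0$ and $B_1$ from $\langle v,w\rangle=\overline{\langle\phi(v),\phi(w)\rangle}$, supersymmetry, and $\phi^2|_{V_\uno}=\pm\,\mathrm{id}$. Your sign bookkeeping on the odd--odd block is exactly the content that the paper leaves implicit, and it checks out in both the standard and the graded case.
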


\smallskip

   We end this section with some examples of real structures in  $ \C^{m|n} $,  to be used later on.

\smallskip

\begin{examples}  \label{unit-ex}
 \vskip1pt
 Let  $ \, V := \C^{m|n} = \C^{m|2t} \, $  with  $ \, n = 2\,t \in 2\,\N_+ \, $;
\,we consider on it the {\sl standard\/}  and {\sl graded\/}  real structures  $ \phi_\st $  and  $ \phi_\gr $  defined by

\beq  \label{can_st-real-str}
  \phi_\st : \C^{m|n} \relbar\joinrel\relbar\joinrel\lra \C^{m|n} \;\; ,
\qquad  (z \, , \zeta) \, \mapsto \, \phi_\st(z \, , \zeta) :=
\big(\, \zbar \, , \zetabar \,\big)
\eeq
\beq  \label{can_gr-real-str}
  \begin{aligned}
     &  \phi_\gr \, : \, \C^{m|2t} \relbar\joinrel\relbar\joinrel\relbar\joinrel\relbar\joinrel\relbar\joinrel\relbar\joinrel\relbar\joinrel\relbar\joinrel\relbar\joinrel\relbar\joinrel\relbar\joinrel\lra \, \C^{m|2t}  \cr
     &  \;\;  (z\,,\zeta_+\,,\zeta_-)  \; \mapsto \;  \phi_\gr(z\,,\zeta_+\,,\zeta_-)
:= \big(\, \zbar \, , +\overline{\zeta_-} \, , -\overline{\zeta_+} \,\big)
  \end{aligned}
\eeq

 \vskip3pt
   Now we fix in  $ \C^{m|2t} $  the bilinear  form
 $ \; {\langle\,\ ,\ \rangle}_V : \C^{m|2t} \times \C^{m|2t} \!\relbar\joinrel\lra \C \; $
 defined by
  $$  {\big\langle (z\,,\zeta_+\,,\zeta_-) \, , \big( z' \, , \zeta'_+ \, , \zeta'_- \big) \big\rangle}_V  \,\; := \;\,  z \cdot z' \, + \, \zeta_+ \!\cdot \zeta'_- - \, \zeta_- \!\cdot \zeta'_+  $$
 (notation as before).  A moment's check shows that the form  $ \, {\langle\,\ ,\ \rangle}_V \, $  fulfills the following:
 \vskip1pt
   ---  \; {\it (1)} \quad  it is supersymmetric,
 \vskip1pt
   ---  \; {\it (2)} \quad  $ \overline{\langle x \, , y \rangle_V} \, = \, {\big\langle \phi(x) \, , \phi(y) \big\rangle}_V \;\, $  for all  $ \, x , y \in V $,  \,for both  $ \, \phi \in \big\{ \phi_\st \, , \phi_\gr \big\} \; $.

\vskip5pt

   According to  Proposition \ref{herm-bil},  there exist two pairs of super Hermitian forms on  $ \, V := \C^{m|2t} \, $  associated with the form  $ \, {\langle\,\ ,\ \rangle}_V \, $  and the real structures  $ \phi_\st $  and  $ \phi_\gr \, $,  namely
\begin{itemize}
  \item  {\sl Standard case\/  {\rm (the sign being irrelevant)}:}
\begin{equation}  \label{Bphist-canon}
   B^\pm_{\phi_\st}\!\big( (z,\zeta_+,\zeta_-) \, , \big(z',\zeta'_+,\zeta'_-\big) \big)
\,\; = \;\,  z \cdot \overline{z'} \, + \, \zeta_+ \cdot \overline{\zeta'_-} \, - \, \zeta_- \cdot \overline{\zeta'_+}   \quad
\end{equation}
\item  {\sl Graded case:}
\begin{equation}  \label{Bphigr-canon}
  B^\pm_{\phi_\gr}\!\big( (z,\zeta_+,\zeta_-) \, , \big(z',\zeta'_+,\zeta'_-\big) \big)
\,\; = \;\,  z \cdot \overline{z'} \, \mp \, i \, \big(\, \zeta_+ \!\cdot \overline{\zeta'_+} \, + \, \zeta_- \!\cdot \overline{\zeta'_-} \,\big)
\end{equation}
\end{itemize}

\smallskip

   {\sl Note\/}  that, using the compact notation  $ \, \zeta := (\zeta_+ , \zeta_-) \, $,  we can re-write the forms  $ B^\pm_{\phi_\gr} $  as
  $$  B^\pm_{\phi_\gr}\!\big( (z\,,\zeta) \, , \big( z' \, , \zeta' \,\big) \big)  \,\; = \;\,  z \cdot \overline{z'} \, \mp \, i \, \zeta \!\cdot \overline{\zeta'}  $$
which looks like  \eqref{sHer-B_V}  in the standard case, up to switching signs.
\end{examples}

\smallskip

\begin{remark}
   It is worth stressing that not all Hermitian forms can be realized as  $ B_\phi $  as in  Proposition \ref{herm-bil};  in fact, for any such  $ B_\phi $  the odd part of the superspace  $ V $  must be even dimensional.  So, for example, the forms in  \eqref{sHer-B_V}  on  $ \C^{m|n} $  for  {\sl odd\/}  $ n $  cannot be realized as a  $ B_\phi \, $.
                                                    \par
   Nevertheless, we will have a particular interest for Hermitian forms on  $ \fgl(m|n) \, $:  note that for this superspace the odd part has dimension  $ 2\,m\,n \, $.
\end{remark}

\medskip

\subsection{Functorial Hermitian forms}

 We introduce now the functorial counterpart of the notion of super Hermitian form.

\medskip

\begin{definition}  \label{herm-def}
%
%
%
 Given  $ \, (V,\phi) \in \smod^\bullet_\C \, $   --- for any  $ \, \bullet \in \{\text{\rm st} , \text{\rm gr}\} \, $  ---   recall that the functor  $ \, \cL_V : \salg_\C^\bullet \lra \smod_\R \, $  has values into the category  $ \smod_\R $  of real super vector spaces with  $ \Z_2 $--grading  given by  $ \, {\big( \cL_V(A) \big)}_{\overline{z}} := A_{\overline{z}} \otimes_\C V_{\overline{z}} \, $   --- for each  $ \, {\overline{z}} \in \Z_2 \, $;  {\sl for this grading, we denote with  $ \, [v] := \overline{z} \, $  the degree of a homogeneous vector  $ \, v \in {\big( \cL_V(A) \big)}_{\overline{z}} \; $.}
                                                                \par
   We call  {\it functorial Hermitian form\/}  (or just  {\it Hermitian form\/})  $ \cB $  on  $ \, \cL_V : \salg_\C^\bullet \longrightarrow \smod_\R \, $  any natural transformation  $ \, \cB : \cL_V \times \cL_V \lra \cL_\C \, $  such that
\begin{enumerate}
  \item  $ \, \cB \, $  is  $ \cA_\zero $--linear  on the left, i.e.\ it is left-additive and such that   \hfill\break
      \phantom{A} \qquad   $ \; \cB(a\,X,Y)\, = \, a \, \cB(X,Y) \; $  for all  $ \, a \in A_\zero \, $,  $ \, X , Y \in \cL_V(A) \, $,  $ \, A \in \salg^\bullet_\C \; $;
  \item  $ \, \cB \, $  is  $ A_\zero $--antilinear  on the right, i.e.\ it is right-additive and such that   \hfill\break
      \phantom{A} \qquad   $ \; \cB(X,a\,Y) \, = \, \widetilde{a} \, \cB(X,Y) \; $  for all  $ \, a \in A_\zero \, $,  $ \, X , Y \in \cL_V(A) \, $,  $ \, A \in \salg^\bullet_\C \; $;
  \item  $ \cB(X,Y) \, = \,
   \begin{cases}
     \; {(-1)}^{[X]\,[Y]} \, \widetilde{\cB(Y,X)}  &   \text{\ if  $ \, \phi \, $  is  {\sl standard}}  \\
     \;\;\; \widetilde{\cB(Y,X)}  &   \text{\ if  $ \, \phi \, $  is  {\sl graded}}
   \end{cases} $   \hfill\break
 for all  $ \, X , Y \in \cL_V(A) \, $,  $ \, A \in \salg^\bullet_\C \; $.
In short, using notation as in  \eqref{herm-susy}  we can write   \hfill\break
      \phantom{A} \qquad   $ \;\; \cB(X,Y) \, = \, {(-1)}^{(\,\one-\nu_\phi)\,[X]\,[Y]} \, \widetilde{\cB(Y,X)} \;\; $  for any  $ \phi \, $.
%
\end{enumerate}
                                                 \par
   In addition, we say that  $ \cB $  is  {\it consistent\/}  if  $ \, \cB(Y,X) = 0 \, $  for all homogeneous  $ X $,  $ Y $  with different parity, i.e.\  $ \, [X] \not= [Y] \, $.
\end{definition}

\medskip

%
%

\begin{lemma}  \label{B_V ---> cB_L}
 Let  $ \, (V,\phi) \in \smod^\bullet_\C \, $,  and let  $ \, B_V : V \times V \lra \C \, $  be a consistent super Hermitian form on the super vector space  $ V \, $.  Then the natural transformation $ \; \cB_\cL : \cL_V \times \cL_V \relbar\joinrel\relbar\joinrel\lra \cL_\C \; $  defined on objects   --- for  $ \, A \in \salg^\bullet_\C \, $  and homogeneous  $ \, a \in A_{\overline{z}} \, $,  $ \, x \in V_{\overline{z}} \, $,  $ \, b \in A_{\overline{s}} \, $,  $ \, y \in V_{\overline{s}} \, $  ---   by
\begin{equation}  \label{eq: B_V --> cB_L}
  \cB_{\cL_V}\big( a\,x , b\,y \big)  \; := \;  i^{|x| |y|} \, a \, \widetilde{b} \, B_V(x,y)
\end{equation}
%
%
 is a consistent Hermitian form for  $ \cL_V \, $.
\end{lemma}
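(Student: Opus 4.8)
The plan is to verify directly that $\cB_{\cL_V}$, as defined by \eqref{eq: B_V --> cB_L}, satisfies the three axioms of Definition \ref{herm-def} together with consistency. Since $\cL_V(A) = {(A\otimes_\C V)}_\zero$ is spanned over $\C$ by homogeneous elementary tensors $a\,x := a\otimes x$ with $|a| = |x|$, and all the axioms are additive in each slot, it suffices to test them on such generators; throughout I write $\, p := |a| = |x| \,$ and $\, q := |b| = |y| \,$. First I would record that \eqref{eq: B_V --> cB_L} is well posed: its right-hand side is invariant under $(\lambda\,a)\otimes x = a\otimes(\lambda\,x)$ and $(\lambda\,b)\otimes y = b\otimes(\lambda\,y)$ for $\lambda\in\C$, because $B_V$ is $\C$--linear in its first and $\C$--antilinear in its second entry while $\widetilde{(\ )}$ is antilinear. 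Moreover the value $i^{pq}\,a\,\widetilde b\,B_V(x,y)$ lies in $\cL_\C(A) = A_\zero$: whenever $p \neq q$ it vanishes by consistency of $B_V$, and when $p = q$ the element $a\,\widetilde b$ has even parity $2p$. Naturality in $A$ is immediate, since every arrow of $\salg_\C^\bullet$ commutes with conjugation.

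Axioms (1) and (2) and consistency are the routine part. For $c\in A_\zero$ the scalar $c$ is even, hence central in the supercommutative algebra $A$, so it factors out on the left to give $\cB_{\cL_V}(c\,a\,x, b\,y) = c\,\cB_{\cL_V}(a\,x, b\,y)$; on the right one uses $\widetilde{c\,b} = \widetilde c\,\widetilde b$ with $\widetilde c$ again even and central to get $\cB_{\cL_V}(a\,x, c\,b\,y) = \widetilde c\,\cB_{\cL_V}(a\,x, b\,y)$. Consistency is the observation already made: if $[a\,x] \neq [b\,y]$, i.e.\ $p\neq q$, then $|x|\neq|y|$ and $B_V(x,y) = 0$.

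The heart of the proof --- and the step I expect to be the main obstacle --- is axiom (3), the Hermitian symmetry. I would compute $\, \widetilde{\cB_{\cL_V}(b\,y, a\,x)} \,$ and compare it with $\, \cB_{\cL_V}(a\,x, b\,y) \,$, keeping track of four sources of signs: the antilinearity of $\widetilde{(\ )}$ turning the scalar $i^{qp}$ into $(-i)^{pq}$; the double-conjugation rule $\, \widetilde{\widetilde a} = {(-1)}^{\nu_\phi\, p}\,a \,$ dictated by whether $\phi$ is standard or graded (Definition \ref{super-str1}); the supercommutativity sign ${(-1)}^{pq}$ incurred in moving $\widetilde b$ past $a$; and the super Hermitian identity $\, \overline{B_V(y,x)} = {(-1)}^{pq}\,B_V(x,y) \,$. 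Assembling these yields $\, \cB_{\cL_V}(a\,x, b\,y) = {(-1)}^{pq + \nu_\phi\, p}\,\widetilde{\cB_{\cL_V}(b\,y, a\,x)} \,$.

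The subtle point is that the exponent $\, pq + \nu_\phi\,p \,$ does not literally equal the target exponent $\, (\one - \nu_\phi)\,p\,q \,$ of Definition \ref{herm-def} for all $p,q$: the two differ by $\, \nu_\phi\,p\,(q+1) \,$. This is where consistency re-enters decisively. Both sides of the symmetry relation vanish unless $p = q$, and for $p = q$ the discrepancy becomes $\, \nu_\phi\,p\,(p+1) \,$, which is even since $p(p+1)\equiv 0 \pmod 2$; hence the two exponents agree modulo $2$ and the required identity $\, \cB_{\cL_V}(X,Y) = {(-1)}^{(\one-\nu_\phi)[X][Y]}\,\widetilde{\cB_{\cL_V}(Y,X)} \,$ holds. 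In particular, in the standard case ($\nu_\phi = \zero$) no correction is needed, whereas in the graded case ($\nu_\phi = \one$) it is precisely the parity arithmetic $p(p+1)\equiv 0$, available only once $p=q$, that makes the symmetry come out sign-free as demanded. This interplay between consistency and parity is the one place where genuine care is needed; the remaining verifications are bookkeeping.
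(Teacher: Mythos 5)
Your proof is correct and follows the only natural route --- direct verification of the axioms of Definition \ref{herm-def} on elementary tensors --- which is exactly what the paper's proof (``a matter of sheer computation'') leaves implicit. Your sign bookkeeping for axiom (3), including the observation that the exponents $pq+\nu_\phi p$ and $(\one-\nu_\phi)pq$ only agree modulo $2$ once consistency forces $p=q$ (where $p(p+1)$ is even), is accurate and supplies precisely the detail the paper omits.
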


\begin{proof}  
 The proof is a matter of sheer computation.
%
%
\end{proof}

\smallskip

\begin{proposition}  \label{corr_Herm-x-V_Herm-x-L_V}
 Let  $ \, (V,\phi) \in \smod^\bullet_\C \, $.  Then formula  \eqref{eq: B_V --> cB_L}  realizes a bijection between
 \vskip2pt
   (a) \;  the set of all consistent super Hermitian forms on  $ V $,
 \vskip2pt
   (b) \;  the set of all consistent Hermitian forms for  $ \cL_V \, $.
\end{proposition}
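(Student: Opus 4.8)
The assignment from (a) to (b) is the map $B_V \mapsto \cB_{\cL_V}$ of Lemma~\ref{B_V ---> cB_L}, already known to carry consistent super Hermitian forms to consistent functorial Hermitian forms. So the plan is to construct an explicit inverse by \emph{evaluating} a functorial form at suitable test superalgebras, and then to verify that the two assignments compose to the identity in both directions. The forward map is visibly $\C$-linear in $B_V$, so really only the set-theoretic bijection is at stake.

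\emph{Recovering a super Hermitian form from $\cB$.} Given a consistent functorial Hermitian form $\cB$ on $\cL_V$, I define $B_V : V \times V \lra \C$ as follows. On $V_\zero \times V_\zero$ I set $B_V(x,y) := \cB_\C(x,y)$, using $\cL_V(\C) = V_\zero$ and $\cL_\C(\C) = \C$; axioms (1)--(3) of Definition~\ref{herm-def} at $A = \C$ make this an ordinary Hermitian form on $V_\zero$. For the odd part I introduce a Grassmann test algebra $D$ carrying the real structure of type $\bullet$ built as in \S\ref{sec_affine-real-spaces}: in the standard case $D := \Lambda_\C(\theta_1,\theta_2)$ with $\widetilde{\theta_i} = \theta_i$, and in the graded case $D := \Lambda_\C(\theta_1,\widetilde{\theta_1},\theta_2,\widetilde{\theta_2})$ with $\widetilde{\widetilde{\theta_i}} = -\theta_i$. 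For $x,y \in V_\uno$ the element $\cB_D(\theta_1 \otimes x,\theta_2 \otimes y)$ lies in $\cL_\C(D)_\zero = D_\zero$; applying naturality along the rescalings $\theta_1 \mapsto \lambda\theta_1$, $\theta_2\mapsto\mu\theta_2$ (extended to the conjugate generators so as to respect the real structure) together with left $A_\zero$-linearity and right $A_\zero$-antilinearity forces this element to be a $\C$-multiple of the single monomial $i\,\theta_1\widetilde{\theta_2}$, and I \emph{define} $B_V(x,y)$ by $\cB_D(\theta_1 x,\theta_2 y) = i\,\theta_1\widetilde{\theta_2}\,B_V(x,y)$; finally $B_V := 0$ on mixed-parity pairs. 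The same rescalings applied to $x,y$ give $\C$-linearity in the first and antilinearity in the second slot, axiom (3) yields the Hermitian symmetry $B_V(x,y) = (-1)^{|x||y|}\overline{B_V(y,x)}$ (by running the sign bookkeeping of Lemma~\ref{B_V ---> cB_L} backwards inside $D$), and consistency of $B_V$ is exactly consistency of $\cB$. Thus $B_V$ is a consistent super Hermitian form.

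\emph{The two assignments are mutually inverse.} That (a)$\to$(b)$\to$(a) is the identity is immediate from \eqref{eq: B_V --> cB_L}: evaluating $\cB_{\cL_V}$ at $A=\C$ and at $D$ returns precisely the original $B_V(x,y)$. For (b)$\to$(a)$\to$(b) I must show that $\cB$ coincides with the form $\cB_{\cL_V}$ rebuilt from the reconstructed $B_V$, i.e.\ that $\cB_A(a\otimes x, b\otimes y) = i^{|x||y|}\,a\,\widetilde{b}\,B_V(x,y)$ for every $A \in \salg^\bullet_\C$ and homogeneous $a\otimes x,\,b\otimes y \in \cL_V(A)$. For $a,b$ even this follows at once from left $A_\zero$-linearity, right $A_\zero$-antilinearity and naturality along $\C \to A$, and mixed-parity terms vanish on both sides by consistency. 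The essential case is $a,b$ odd: for given such $a,b$ I pick a morphism $D \to A$ in $\salg^\bullet_\C$ sending the generators of $D$ to $a,b$ and their conjugates, and apply naturality to the defining relation $\cB_D(\theta_1 x,\theta_2 y) = i\,\theta_1\widetilde{\theta_2}B_V(x,y)$ to obtain $\cB_A(a\otimes x, b\otimes y) = i\,a\,\widetilde{b}\,B_V(x,y)$. In the graded case the assignment $\theta_1\mapsto a,\ \widetilde{\theta_1}\mapsto\widetilde{a},\ \theta_2\mapsto b,\ \widetilde{\theta_2}\mapsto\widetilde{b}$ is automatically a morphism of graded real superalgebras; in the standard case, where this respects the real structure only for \emph{real} odd elements, I first split $a = a'+i\,a''$, $b = b'+i\,b''$ into real and imaginary odd parts and reduce to real generators using the $\C$-bilinearity of $\cB$.

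\emph{Main obstacle.} The only genuine difficulty is this odd--odd step. The axioms of Definition~\ref{herm-def} impose $A_\zero$-(anti)linearity but grant no direct control over odd scalars, so a functorial Hermitian form is \emph{not} visibly determined by its behaviour on the even part alone; one cannot simply pull odd coefficients out of $\cB$. The device that breaks the impasse is precisely the Grassmann test algebra $D$ together with naturality, which transports odd data into an arbitrary $A$ compatibly with the real structure --- this compatibility being automatic in the graded case and requiring the real/imaginary splitting of odd elements in the standard case.
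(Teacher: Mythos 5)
Your proposal is correct and follows essentially the same route as the paper: the paper also inverts \eqref{eq: B_V --> cB_L} by evaluating $\cB$ on a Grassmann test superalgebra equipped with the appropriate real structure (it uses the single algebra $A_{\,\xi_+,\xi_-} = \C[\xi_+,\xi_-]$ with $\widetilde{\xi_\pm} := \xi_\mp$ in the standard case and $\widetilde{\xi_\pm} := \pm\xi_\mp$ in the graded case, reading off $B_V$ from the coefficient of $\xi_+\xi_-$), leaving the verification to the reader. Your write-up merely supplies the details the paper omits, in particular the naturality argument transporting the odd--odd data from the test algebra to an arbitrary $A$.
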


\begin{proof}
 After Lemma \ref{B_V ---> cB_L},  we only need to show that if a form  $ \cB_{\cL_V} $  as in  {\it (b)\/}  is given, then we can find a unique  $ B_V $  on  $ V $  satisfying  \eqref{eq: B_V --> cB_L}.  Indeed, such a  $ B_V $  is defined as follows.  Consider  $ \, A_{\,\xi_+,\xi_-} := \C\big[ \xi_+ , \xi_- \,\big] \in \salg_\C \, $:  this superalgebra has a ``canonical'' standard real structure defined by  $ \, \xi_\pm \mapsto \widetilde{\xi_\pm} := \xi_\mp \, $,  and a ``canonical'' graded one given by  $ \, \xi_\pm \mapsto \widetilde{\xi_\pm} := \pm \xi_\mp \, $.  Then
 also
  $ \,\; \cB_{\cL_V} : \cL_V\big( A_{\,\xi_+,\xi_-} \big) \times \cL_V\big( A_{\,\xi_+,\xi_-} \big) \relbar\joinrel\relbar\joinrel\relbar\joinrel\relbar\joinrel\lra \cL_\C\big( A_{\,\xi_+,\xi_-} \big) \;\, $
 is defined, taking values in  $ \; \cL_\C\big( A_{\,\xi_+,\xi_-} \big) = \C \oplus \C\,\xi_+\,\xi_- \, $,  \,which has  $ \C $--basis  $ \, \{ 1 \, , \xi_+\,\xi_- \} \, $;  thus we can use  \eqref{eq: B_V --> cB_L}  with respect to  $ A_{\,\xi_+,\xi_-} $  to  {\sl define\/} $ B_V $  on  $ V \, $,  and then easily verify that it has all the required properties.
\end{proof}

\smallskip

\begin{definition}  \label{def: cB nondegen/pos-definite}
 We say that an Hermitian form  $ \cB_{\cL_V} $  for  $ \cL_V $  is  {\it non degenerate}, or that it is  {\it positive definite},  if its associated  $ B_V $  is.
\end{definition}

\smallskip

\begin{observation}
 Let  $ \, (V,\phi) \in \smod^\bullet_\C \, $  and let  $ \langle\,\ ,\ \rangle $  be a consistent supersymmetric bilinear form on  $ V $.  Then we can associate to it a natural transformation
\beq  \label{fopts-susyform}
  \langle\,\ ,\ \rangle_{\cL_V} \, : \, \cL_V \times \cL_V \relbar\joinrel\relbar\joinrel\relbar\joinrel\lra \cL_\C  \quad ,   \qquad  {\big\langle a \, x , b \, y \big\rangle}_{\cL_V}  \, := \,  a \, b \, \langle x \, , y \rangle
\eeq
where  $ \, a , b \in A_{\overline{z}} \, $  and  $ \, x , y \in V_{\overline{z}} \, $,  for all  $ \, z \in \Z_2 \, $.  By  Proposition \ref{herm-bil},  there exist two super Hermitian forms  $ B^\pm_V $  associated to $ \, \langle\,\ ,\ \rangle \, $,  and by  Lemma \ref{B_V ---> cB_L}  there exists a unique  $ \cB^\pm_{\cL_V} $  associated to  $ B^\pm_V \, $.  Therefore we can write  $ \cB^\pm_{\cL_V} $  directly from  $ \, \langle\,\ ,\ \rangle_{\cL_V} \, $,  namely (with notation as in  \eqref{herm-susy})
\beq  \label{Bfopts-def}
  \cB^\pm_{\cL_V}(X,Y)  \; = \;  i^{\,(\,1\,\pm\,\nu_\phi)\,[X]\,[Y]} \, {\big\langle X , \varphi_A(Y) \big\rangle}_{\cL_V}   \qquad  \forall \;\; X , Y \in \cL_V(A) \, ,  \; A \in \salg^\bullet_\C
\eeq
%
%
%
%
%
\end{observation}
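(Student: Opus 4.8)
The existence of the forms $B^\pm_V$ and $\cB^\pm_{\cL_V}$ being already granted by Proposition \ref{herm-bil} and Lemma \ref{B_V ---> cB_L}, the only thing left to establish is the closed formula \eqref{Bfopts-def}. The plan is to verify it by a direct computation on homogeneous generators, after which the general case follows by additivity. Indeed, both $\cB^\pm_{\cL_V}$ and the natural transformation $(X,Y)\mapsto i^{\,(1\pm\nu_\phi)[X][Y]}\,\langle X,\varphi_A(Y)\rangle_{\cL_V}$ are additive in each slot and are determined on pure tensors by the prescriptions \eqref{eq: B_V --> cB_L} and \eqref{fopts-susyform}; hence it suffices to test the equality on $X=a\,x$ and $Y=b\,y$ with $a\in A_{|x|}$, $b\in A_{|y|}$ homogeneous (so $[X]=|x|$ and $[Y]=|y|$), for each $A\in\salg^\bullet_\C$. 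When $|x|\ne|y|$ both sides vanish by consistency, so the substance lies in the case $|x|=|y|$.

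First I would expand the left-hand side. By Lemma \ref{B_V ---> cB_L} (formula \eqref{eq: B_V --> cB_L}) together with the explicit shape of $B^\pm_V$ furnished by Proposition \ref{herm-bil} (formula \eqref{herm-susy}),
$$ \cB^\pm_{\cL_V}(a\,x,b\,y) \;=\; i^{\,|x||y|}\,a\,\widetilde{b}\,B^\pm_V(x,y) \;=\; i^{\,|x||y|}\,(\pm i)^{\nu_\phi\,|x||y|}\,a\,\widetilde{b}\,\langle x,\phi(y)\rangle . $$
For the right-hand side I would use that $\varphi_A=\widetilde{(-)}\otimes\phi$ is parity-preserving (Theorem \ref{eq-real}), so that $\varphi_A(b\,y)=\widetilde{b}\,\phi(y)$ with $\widetilde{b}\in A_{|y|}$ and $\phi(y)\in V_{|y|}$; applying the defining formula \eqref{fopts-susyform} then yields, with no spurious Koszul sign precisely because $\varphi_A$ respects parity,
$$ i^{\,(1\pm\nu_\phi)[X][Y]}\,\langle X,\varphi_A(Y)\rangle_{\cL_V} \;=\; i^{\,(1\pm\nu_\phi)|x||y|}\,\langle a\,x,\widetilde{b}\,\phi(y)\rangle_{\cL_V} \;=\; i^{\,(1\pm\nu_\phi)|x||y|}\,a\,\widetilde{b}\,\langle x,\phi(y)\rangle . $$

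Comparing the two displays, the claim reduces to the purely scalar identity $i^{\,(1\pm\nu_\phi)k}=i^{\,k}\,(\pm i)^{\nu_\phi k}$ with $k:=|x||y|$ and $\nu_\phi\in\{\zero,\uno\}$. For the $+$ sign this is immediate by adding exponents of $i$; for the $-$ sign I would write $(-i)^{\nu_\phi k}=i^{\,3\nu_\phi k}$ (using $-1=i^2$) and conclude from $i^{\,4\nu_\phi k}=1$. The whole argument is thus routine bookkeeping of powers of $i$ modulo $4$, combined with the parity-preservation of $\varphi_A$; the only point requiring a little care — and the closest thing to an obstacle — is keeping the exponent arithmetic consistent across the four cases (standard versus graded, and the two choices of sign), but no genuine difficulty arises.
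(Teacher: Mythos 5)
Your verification is correct: the paper states this Observation without proof, treating it as an immediate consequence of Proposition \ref{herm-bil} and Lemma \ref{B_V ---> cB_L}, and your reduction to pure tensors, use of the parity-preservation of $\varphi_A = \widetilde{(-)}\otimes\phi$, and the scalar identity $i^{\,(1\pm\nu_\phi)k} = i^{\,k}(\pm i)^{\nu_\phi k}$ (mod $4$ in the exponent) is exactly the routine computation being implicitly invoked. Nothing is missing.
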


\subsection{Unitary Lie superalgebras}

 In this section we introduce a general notion of  {\sl unitary superalgebras\/};  in the subsequent subsection then we will also present some relevant examples.

\smallskip

\begin{definition}  \label{def: adjoint}
 Let  $ \, (V,\phi) \in \smod^\bullet_\C \, $,  and let  $ \cB $  be a non-degenerate, consistent Hermitian form on  $ \, \cL_V : \salg_\C^\bullet \longrightarrow \smod_\R \, $.  We define the  {\it adjoint\/} (w.r.t.\  $ \cB \, $)  of  $ \, M \in \big(\text{\sl End}\,(V)\big)(A) \, $  as the unique  $ \, M^\star \in \big(\text{\sl End}\,(V)\big)(A) \, $  defined by
%
%
\beq  \label{adjoint}
  \cB\big( x , M^\star(y) \big)  \; = \;\,
    \begin{cases}
       \; {(-1)}^{[x]\,[M]} \, \cB\big( M(x) \, , y \,\big)  &  \text{\ if  $ \, \bullet = \st \, $}  \\
       \;\;\; \cB\big( M(x) \, , y \,\big)  &   \text{\ if  $ \, \bullet = \gr \, $}
\end{cases}
\eeq
 for all  $ \, x \, , y \in V(A) \, $,  $ \, M \in \big(\text{\sl End}(V)\big)(A) \, $   --- which in the standard case are taken  {\sl homogeneous\/}  with respect to the $ \Z_2 $--grading  whose degree is denoted by  ``$ \, [\ \ ] \, $'',  cf.\ Definition \ref{herm-def}.  Like before, the condition  \eqref{adjoint}  reads  $ \,\; \cB\big( x , M^\star(y) \big) = {(-1)}^{(\,\one\,-\,\nu_\phi)\,[x]\,[M]} \, \cB\big( M(x) \, , y \,\big) \;\, $  with notation as in  \eqref{herm-susy}.
\end{definition}

   The key properties of the adjoint are the following, proved by straightforward check:

\smallskip

\begin{lemma}  \label{adjoint-lemma}
 With notation as in  Definition \ref{def: adjoint}  above, we have
  $$  \displaylines{
   {(a\,M)}^\star  \, = \;  \widetilde{a} \, M^\star  \quad ,  \qquad  M^{\star\star} = \, M  \quad ,   \qquad  {(-M\,)}^\star  \, = \;  -M^\star  \quad ,   \qquad  {(M+N\,)}^\star  \, = \;  M^\star + N^\star  \cr
   {(M\,N\,)}^\star  \, = \;  N^\star \, M^\star  \quad ,   \qquad  {[\,M,N\,]}^\star \, = \; {(-1)}^{(\,\one\,-\,\nu_\phi)\,[M]\,[N]} \big[ N^\star , M^\star \big]  \quad ,   \qquad {\big( J^{-1} \big)}^\star \, = \; {\big( J^\star \big)}^{-1}  }  $$
 for all  $ \, a \in A_\zero \, $,  all  $ \; M , N \in \big(\text{\sl End}\,(V)\big)(A) \; $  and all  $ \, J \in \big(\bGL(V)\big)(A) \; $.
\end{lemma}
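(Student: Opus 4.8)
The plan is to verify each identity directly from the defining relation \eqref{adjoint}, exploiting that the non-degeneracy of $\cB$ makes the adjoint \emph{unique}: if an operator $T$ satisfies $\cB\big(x, T(y)\big) = (-1)^{(\one-\nu_\phi)[x][M]}\,\cB\big(M(x), y\big)$ for all $x, y$, then necessarily $T = M^\star$. Thus, to prove an identity $X^\star = Y$ it suffices to check that $Y$ obeys the relation characterising $X^\star$. Throughout I would run the two cases in parallel, packaging every sign through the single exponent $(\one-\nu_\phi)[\,\cdot\,][\,\cdot\,]$ from \eqref{herm-susy}, so that the standard ($\nu_\phi=\zero$) and graded ($\nu_\phi=\one$) computations proceed simultaneously.

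The first four identities are immediate. For $(aM)^\star = \widetilde{a}\,M^\star$ with $a \in A_\zero$ (so $[aM]=[M]$), I would compute $\cB\big(x,\widetilde{a}\,M^\star(y)\big) = a\,\cB\big(x,M^\star(y)\big)$ by right $A_\zero$-antilinearity, and $\cB\big(x,(aM)^\star(y)\big) = (-1)^{(\one-\nu_\phi)[x][M]}\,a\,\cB\big(M(x),y\big)$ by \eqref{adjoint} and left $A_\zero$-linearity; the two agree, and uniqueness gives the claim. Additivity $(M+N)^\star = M^\star + N^\star$ (for homogeneous $M,N$ of equal degree) and $(-M)^\star = -M^\star$ follow the same way from bi-additivity of $\cB$. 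The involutivity $M^{\star\star}=M$ is the one genuinely using the Hermitian symmetry, i.e.\ property (3) of Definition \ref{herm-def}: applying \eqref{adjoint} to $(M^\star)^\star$ and then rewriting $\cB\big(M^\star(x),y\big)$ by flipping the two slots twice via (3) and invoking the defining relation of $M^\star$ once, one collects an exponent which, after cancelling the symmetric terms $2[M][y]$ and $2[x][y]$ modulo $2$, reduces to $(-1)^{(\one-\nu_\phi)[M][x]}$; this exactly annihilates the sign produced by the outer adjoint, leaving $\cB\big(x,M^{\star\star}(y)\big)=\cB\big(x,M(y)\big)$, whence $M^{\star\star}=M$.

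The product rule is the computational heart of the lemma and where the signs demand the most care. Peeling off the two adjoints one at a time gives $\cB\big(x,N^\star M^\star(y)\big) = (-1)^{(\one-\nu_\phi)[x][N]}\,\cB\big(N(x),M^\star(y)\big) = (-1)^{(\one-\nu_\phi)\big([x][N]+([x]+[N])[M]\big)}\,\cB\big(MN(x),y\big)$, whereas the defining relation for $(MN)^\star$ carries only the exponent $(\one-\nu_\phi)[x]([M]+[N])$. Comparing, $N^\star M^\star$ differs from $(MN)^\star$ by the Koszul factor $(-1)^{(\one-\nu_\phi)[M][N]}$: in the graded case this is trivial and one obtains $(MN)^\star = N^\star M^\star$ directly, while in the standard case the factor $(-1)^{[M][N]}$ must be carried along, and it is precisely this factor that resurfaces, after antisymmetrisation, in the bracket. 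Feeding the product rule and additivity into the super-commutator $[M,N] = MN - (-1)^{[M][N]}\,NM$ and reindexing the degrees then yields $[M,N]^\star = (-1)^{(\one-\nu_\phi)[M][N]}\big[N^\star,M^\star\big]$. Finally $(J^{-1})^\star = (J^\star)^{-1}$ follows by applying the product rule to $\id = J^{-1}J$ together with $\id^\star = \id$ (immediate from \eqref{adjoint}), noting that points of $\bGL(V)$ are even, so no sign intervenes.

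The main obstacle, as this makes clear, is not conceptual but the systematic bookkeeping of Koszul signs while keeping the $\st$ and $\gr$ cases aligned; routing every sign through the single exponent $(\one-\nu_\phi)[\,\cdot\,][\,\cdot\,]$ is exactly what keeps the verification tractable and renders the uniform statements transparent, so that the computation does reduce, case by case, to the asserted identities.
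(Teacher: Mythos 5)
Your method --- checking each identity against the defining relation \eqref{adjoint} and using non-degeneracy of $\cB$ to characterise the adjoint uniquely --- is exactly the ``straightforward check'' the paper intends (it prints no details), and your verifications of the scalar rule, additivity, $M^{\star\star}=M$ (including the cancellation of the exponent down to $(\one-\nu_\phi)[M][x]$) and $\big(J^{-1}\big)^\star=\big(J^\star\big)^{-1}$ are correct.

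Two points need attention. First, your product-rule computation is right, and it genuinely yields $(MN)^\star=(-1)^{(\one-\nu_\phi)[M][N]}\,N^\star M^\star$: you acknowledge the extra Koszul factor but stop short of drawing the conclusion, namely that the \emph{unsigned} identity $(MN)^\star=N^\star M^\star$ as printed in the Lemma fails in the standard case when $[M]=[N]=\one$. One can confirm this on $M=\alpha\,E_{12}$, $N=\beta\,E_{21}$ in $\fgl(1|1)(A)$ with $\alpha,\beta\in A_\uno$, using the explicit adjoints of \S\ref{sbsbsec: Stand_case}: there $(MN)^\star=\widetilde{\alpha}\,\widetilde{\beta}\,E_{11}$ while $N^\star M^\star=\widetilde{\beta}\,\widetilde{\alpha}\,E_{11}=-\widetilde{\alpha}\,\widetilde{\beta}\,E_{11}$. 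The signed version is moreover the only one compatible with the bracket identity in the statement, so what your argument actually establishes is the corrected product rule; you should say this explicitly rather than leave the discrepancy implicit. Second, a convention slip: $\cL_{\fgl(V)}(A)$ is a $\Z_2$-graded \emph{ordinary} Lie algebra, so its bracket is the plain commutator $[M,N]=MN-NM$, not the super-commutator $MN-(-1)^{[M][N]}NM$ that you write --- the Koszul signs are already absorbed into the odd coefficients from $A_\uno$. As it happens, feeding the signed product rule into either convention produces the same final formula $[M,N]^\star=(-1)^{(\one-\nu_\phi)[M][N]}\big[N^\star,M^\star\big]$, so your conclusion stands, but the intermediate step should be phrased with the correct bracket.
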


\smallskip

\begin{proposition}  \label{adjoint-prop}
 Let the notation be as above, and identify  $ \, \big(\cL_{\fgl(V)}\big)(A) = \big(\text{\sl End}\,(V)\big)(A) \, $.  Then the natural transformation  $ \; \circledast : \cL_{\fgl(V)} \!\relbar\joinrel\lra \cL_{\fgl(V)} \; $  defined on objects by

\beq  \label{eq: adj->real-struct_stand-case}
  M  \; \mapsto \;  M^\circledast := \, \begin{cases}
                         \, -M^\star  &  \;\; \text{if} \;\;\; [M] = \zero \\
                         \, i \, M^\star  &  \;\; \text{if} \;\;\; [M] = \one
   \end{cases}
\qquad   \text{in the  {\sl standard}  case}
\eeq
 and by
\beq  \label{eq: adj->real-struct_grad-case}
  M  \; \mapsto \;  M^\circledast := \, -M^\star  \;\;\;\;\;\; \text{for all} \;\;\; M
\qquad \;\;   \text{in the  {\sl graded}  case}
\eeq
 is a real structure on the functor  $ \cL_{\fgl(V)} \, $,  hence   --- via  Theorem \ref{eq-real} ---   defines a real structure on the complex Lie superalgebra  $ \fgl(V) \, $.
\end{proposition}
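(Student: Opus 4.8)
The plan is to verify directly that the natural transformation $\circledast$ satisfies the four conditions defining a Lie-superalgebra real structure on the functor $\cL_{\fgl(V)}$ in Definition \ref{gr-str2} --- conjugate $A_\zero$--linearity, parity--preservation, involutivity $(M^\circledast)^\circledast=M$, and bracket--preservation --- together with naturality, and then to invoke Theorem \ref{eq-real} to descend to a real structure on $\fgl(V)$ itself (standard or graded according to the ambient category $\salg_\C^\bullet$). Throughout I would work only with the formal properties of the adjoint collected in Lemma \ref{adjoint-lemma}, treating $M,N\in\big(\text{\sl End}\,(V)\big)(A)$ as homogeneous for the internal degree $[\,\cdot\,]$ and extending additively over the internal $\Z_2$--grading; this is consistent with conjugate $A_\zero$--linearity, since the bracket on $\cL_{\fgl(V)}(A)=\big(A\otimes\fgl(V)\big)_\zero$ is $A_\zero$--bilinear and $\circledast_A$ is conjugate $A_\zero$--linear, so both sides of each identity transform the same way under rescaling by $a\in A_\zero$.

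The easy points I would settle first. Conjugate $A_\zero$--linearity is immediate from ${(a\,M)}^\star=\widetilde{a}\,M^\star$: for $a\in A_\zero$ the degree $[M]$ is unchanged, so the degree--dependent scalar in the definition of $\circledast$ stays in place and $\widetilde{a}$ factors out. Parity--preservation holds because $\star$ preserves the internal degree (the form $\cB$ being consistent) and multiplication by the even scalar $i$ does not affect it. Naturality is inherited from that of $\cB$: every morphism of $\salg_\C^\bullet$ intertwines the two conjugations, whence $\star$ is a natural transformation, and the degree--wise rescaling is natural as well.

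For involutivity the graded case is instantaneous: since $M^\circledast=-M^\star$ for all $M$, one gets $(M^\circledast)^\circledast=-{(-M^\star)}^\star=M^{\star\star}=M$ from ${(-N)}^\star=-N^\star$ and $M^{\star\star}=M$. In the standard case the even part behaves identically, while on the odd part the whole point is the interaction of the factor $i$ with the conjugation of scalars: using $\widetilde{\,i\,}=-i$ one computes
$$
{(M^\circledast)}^\circledast \;=\; i\,{(i\,M^\star)}^\star \;=\; i\,\widetilde{\,i\,}\,M^{\star\star} \;=\; i\,(-i)\,M \;=\; M \, ,
$$
which is exactly why the odd--degree coefficient must be a square root of $-1$ (here $i$) rather than a real scalar.

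The one genuinely delicate step is bracket--preservation, and here I would feed in the identity ${[M,N]}^\star=(-1)^{(\one-\nu_\phi)[M][N]}\,[N^\star,M^\star]$ from Lemma \ref{adjoint-lemma} together with the fact that the bracket of $\cL_{\fgl(V)}(A)$ is the ordinary commutator. In the graded case ($\nu_\phi=\one$) the check is uniform,
$$
{[M,N]}^\circledast \,=\, -{[M,N]}^\star \,=\, -[N^\star,M^\star] \,=\, [M^\star,N^\star] \,=\, [-M^\star,-N^\star] \,=\, [M^\circledast,N^\circledast] \, .
$$
In the standard case ($\nu_\phi=\zero$) I would split according to the parities of $M$ and $N$. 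If $M,N$ have equal degree then $[M,N]$ is even and ${[M,N]}^\circledast=-{[M,N]}^\star$; for $[M]=[N]=\zero$ this is trivial, while for $[M]=[N]=\one$ the sign $(-1)^{[M][N]}=-1$ produced by the adjoint of the bracket is precisely cancelled by the $i^2=-1$ coming from $[M^\circledast,N^\circledast]=[i\,M^\star,i\,N^\star]=i^2[M^\star,N^\star]$. If instead exactly one of $M,N$ is odd, then $[M,N]$ is odd, so ${[M,N]}^\circledast=i\,{[M,N]}^\star=i\,[N^\star,M^\star]$, and $[M^\circledast,N^\circledast]$ acquires a single factor $i$ and a minus sign from antisymmetry, giving the same value. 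The hard part is exactly this triple bookkeeping --- the degree--dependent scalar $i$ in $\circledast$, the sign $(-1)^{(\one-\nu_\phi)[M][N]}$ in ${[M,N]}^\star$, and the antisymmetry $[M^\star,N^\star]=-[N^\star,M^\star]$ --- and checking that in every parity pattern they conspire to yield $[M^\circledast,N^\circledast]={[M,N]}^\circledast$.
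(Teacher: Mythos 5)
Your proposal is correct and follows exactly the route the paper takes: the paper's proof consists of the single remark that properties (1)--(3) of Definition \ref{gr-str2} plus bracket--preservation follow ``from direct calculations that use Lemma \ref{adjoint-lemma}'', and your write-up simply carries out those calculations (conjugate $A_\zero$--linearity from $(aM)^\star=\widetilde{a}\,M^\star$, involutivity from $M^{\star\star}=M$ together with $\widetilde{\,i\,}=-i$ on the odd part, and the parity-by-parity bookkeeping of the sign $(-1)^{(\one-\nu_\phi)[M][N]}$ against the coefficients $-1$ and $i$ for the bracket). All the sign checks are accurate, so nothing further is needed.
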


\begin{proof}
  By  Definition \ref{gr-str2},  we have to verify properties (1)--(3) therein
and also that  $ \, \circledast \, $  preserves the Lie bracket.  All this follows easily from direct calculations that use  Lemma \ref{adjoint-lemma}.
\end{proof}

\smallskip

\begin{definition}  \label{unit-def}
  Let  $ \cB $  be a non-degenerate, consistent Hermitian form on  $ \cL_V \, $.  We define the  {\it unitary Lie superalgebra\/}  $ \, \fu_\cB(V) \, $  as the functor of fixed points of  $ \; \circledast : \cL_{\fgl(V)} \!\relbar\joinrel\relbar\joinrel\lra \! \cL_{\fgl(V)} \; $,  hereafter denoted  $ \cL_{\fgl(V)}^{\,\circledast} $   --- in the sense of Definition \ref{realf-def}  ---   given on objects by
\beq  \label{unitary}
 \begin{aligned}
   \hskip-25pt   \fu_\cB(V)(A)  \,\; := \;\,  &  \big\{\, M \in \cL_{\fgl(V)}(A) \;\big|\; M^\circledast = \, M \,\big\}  \,\; =  \\
    = \;\;  \Bigg\{\;  &  M \in \cL_{\fgl(V)}(A) \,\;\Bigg|\;\,
 {{\cB\big(M(x) \, , y\big) \, + \, (-1)^{(\one-\nu_\phi)[M][x]} \, \cB\big(x \, , M(y)\big) = \, 0}  \atop  {\forall \;\;\; x \, , y \in V(A)}} \;\Bigg\}
 \end{aligned}
\eeq

\noindent
 Note then that  $ \fu_\cB(V)(A) $  is a  $ \Z_2 $--graded  Lie algebra   --- for all  $ \, A \in \salg_\C^\bullet \, $  ---   just because  $ \cL_{\fgl(V)}(A) $  is
a  $ \Z_2 $--graded  Lie algebra, cf.\  Definition \ref{realf-def}.
\end{definition}

\smallskip

 \begin{remark}
 By  Proposition \ref{gr-rep1},  the functor  $ \fu_\cB(V) $  is always representable.  In addition, in the  {\sl standard\/}  case, i.e.\ when  $ \, (V,\phi) \in \smod^\st_\C \, $,  by  Proposition \ref{re-forms-svs}  the representable functor  $ \fu_\cB(V) $  is represented by the super vector space of all  $ \, \text{\rm m} \in \fgl(V) \, $  such that (see \cite{vsv},  pg.\ 111):
  $$  B\big( \text{\rm m}(x) \,, y \big) \, + \, {(-1)}^{|x||u|} \, B\big( x \,, \text{\rm m}(y) \big)  \; = \; 0  \quad ,  \qquad  \forall \;\; x \, , y \in V  $$
\end{remark}

\smallskip

\begin{observation}  \label{obs: even-part_unitary}
 Let\/  $ \fu_\cB(V) $  be a unitary Lie superalgebra as in  Definition \ref{unit-def};  let also  $ B $  be the consistent super Hermitian form on  $ V $  which corresponds to  $ \cB $  via  Proposition \ref{corr_Herm-x-V_Herm-x-L_V},  which we write as  $ \, B = B_\zero + i\,B_\uno \, $  as in\/  \S \ref{super-Herm-form}.  For each  $ \, \overline{s} \in \Z_2 \, $,  let  $ \, \fu_{B_{\overline{s}}}(V_{\overline{s}}) $  be the classical unitary Lie algebra associated to  $ V_{\overline{s}} $  with the non-degenerate Hermitian form  $ B_{\overline{s}} \, $.  Then the even part of\/  $ \fu_\cB(V) $  is the functor of points of the direct sum Lie algebra  $ \; \fu_{B_\zero}(V_\zero) \oplus \fu_{B_\uno}(V_\uno) \; $.
\end{observation}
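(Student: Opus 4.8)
The plan is to reduce the statement to a decoupling of the defining condition of $\fu_\cB(V)$ on its even part, using that even endomorphisms are block diagonal together with the consistency of $\cB$. First I would recall that $\fgl(V)_\zero = \mathrm{End}(V_\zero) \oplus \mathrm{End}(V_\uno)$, so that an even element $M \in \big(\cL_{\fgl(V)}(A)\big)_\zero = A_\zero \otimes \fgl(V)_\zero$ is block diagonal, $M = \diag(P,Q)$ with $P \in A_\zero \otimes \mathrm{End}(V_\zero)$ and $Q \in A_\zero \otimes \mathrm{End}(V_\uno)$.

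Next I would unwind the fixed-point condition $M^\circledast = M$ for such an $M$. Since $[M] = \zero$, both \eqref{eq: adj->real-struct_stand-case} and \eqref{eq: adj->real-struct_grad-case} give $M^\circledast = -M^\star$, so the condition becomes $M^\star = -M$; moreover the sign $(-1)^{(\one-\nu_\phi)[M][x]}$ appearing in \eqref{unitary} is trivial because $[M] = \zero$, so that membership in $\fu_\cB(V)(A)$ reads
\[
\cB\big(M(x),y\big) + \cB\big(x,M(y)\big) = 0 \qquad \forall \; x,y \in \cL_V(A) .
\]
The key observation is that, $\cB$ being consistent and $M$ block diagonal, this splits into independent constraints: choosing $x,y$ both even constrains $P$ alone, choosing them both odd constrains $Q$ alone, while the mixed choices are vacuous, since then $M(x)$ and $M(y)$ keep their parity and $\cB$ vanishes on pairs of different parity.

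Then I would translate the two surviving constraints through the correspondence of Lemma \ref{B_V ---> cB_L}, namely $\cB(a\,x , b\,y) = i^{|x||y|}\,a\,\widetilde b\,B_V(x,y)$. On the even block $|x| = |y| = \zero$, so the factor $i^{|x||y|}$ equals $1$ and the constraint becomes $B_\zero\big(P(x),y\big) + B_\zero\big(x,P(y)\big) = 0$; on the odd block $|x| = |y| = \uno$, so both terms acquire the same nonzero scalar multiple of the corresponding values of $B_\uno$ (combining $i^{\uno \cdot \uno}$ with the proportionality $B_V|_{V_\uno \times V_\uno} = i\,B_\uno$), and this common factor cancels, leaving $B_\uno\big(Q(x),y\big) + B_\uno\big(x,Q(y)\big) = 0$. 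Using the $A_\zero$--conjugate-linearity of the adjoint from Lemma \ref{adjoint-lemma} together with the splitting $\mathrm{End}(V_{\overline{s}}) = \fu_{B_{\overline{s}}}(V_{\overline{s}}) \oplus i\,\fu_{B_{\overline{s}}}(V_{\overline{s}})$, each constraint cuts out exactly $A_\zero^\re \otimes_\R \fu_{B_{\overline{s}}}(V_{\overline{s}})$ inside $A_\zero \otimes \mathrm{End}(V_{\overline{s}})$, where $A_\zero^\re := \{\, a \in A_\zero \mid a = \widetilde a \,\}$. Hence $\fu_\cB(V)(A)_\zero = A_\zero^\re \otimes_\R \big( \fu_{B_\zero}(V_\zero) \oplus \fu_{B_\uno}(V_\uno) \big)$ naturally in $A$, which is the functor of points of the direct sum Lie algebra; since the commutator of block-diagonal elements preserves the two blocks and acts as the componentwise bracket, this identification is one of ($\Z_2$--graded, here purely even) Lie algebra functors, as asserted.

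The step I expect to be the main obstacle is the functorial bookkeeping over a general $A \in \salg_\C^\bullet$, as opposed to the transparent case $A = \C$: one has to be sure that pointwise skew-adjointness over $\C$ propagates, via the $A_\zero$--conjugate-linearity of $\star$, to precisely the $A_\zero^\re$--points, and that on the odd block the two occurrences of the common scalar factor really do cancel so that no spurious sign survives to alter the skew-Hermitian condition. Once these two points are checked, everything else is the routine verification already flagged as a straightforward computation in the proofs of Lemma \ref{B_V ---> cB_L} and Lemma \ref{adjoint-lemma}.
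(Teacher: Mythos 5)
The paper states this result as an Observation and supplies no proof at all, so there is no argument of the authors' to compare against; your proof is correct and is the natural verification one would expect them to have in mind. In particular, the three points that carry the argument all check out: for even $M$ the sign $(-1)^{(\one-\nu_\phi)[M][x]}$ is trivial and $M^\circledast=-M^\star$ in both the standard and graded cases; consistency of $\cB$ together with block-diagonality of even elements decouples the condition into separate constraints on $\mathrm{End}(V_\zero)$ and $\mathrm{End}(V_\uno)$, with the factor $i^{\uno\cdot\uno}\cdot i$ cancelling on the odd block so that the constraint becomes skew-Hermitian\-ness for the genuine Hermitian form $B_\uno$; and the antilinearity $(aM)^\star=\widetilde{a}\,M^\star$ identifies the fixed points with $A_\zero^{\re}\otimes_\R\big(\fu_{B_\zero}(V_\zero)\oplus\fu_{B_\uno}(V_\uno)\big)$, which is exactly the functor of points of the direct sum in the sense of Proposition \ref{re-forms-svs}.
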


\medskip

\subsection{Examples of unitary Lie superalgebras}  \label{sec: ex-unit-supalgs}

 We provide now some examples of real structures, super Hermitian forms and associated unitary Lie superalgebras.

\medskip

\begin{free text}  \label{sbsbsec: Stand_case}
 {\bf Standard real structures on  $ \, \fgl_{m|n} \, $.}
 Let  $ \, V := \C^{m|n} \, $  be endowed   --- like in  Example \ref{unit-ex0}  ---   with the standard real structure  $ \, \phi_\st : \C^{m|n} \!\relbar\joinrel\lra \C^{m|n} \, $  given by  $ \, \phi_\st(z,\zeta) := \big(\,\zbar\,,\zetabar\,\big) \, $,  and the two super Hermitian forms given by
\beq  \label{eq: B_V - st}
  B^\pm_V\big( (z\,,\zeta\,) \,,  \big(z',\zeta'\,\big) \big)  \; :=  \;  z \cdot \overline{z'} \, \pm \, i \, \zeta \!\cdot \overline{\zeta'}
\eeq
   \indent   Following  Lemma \ref{B_V ---> cB_L},  the super Hermitian forms  $ B^\pm_V $  on  $ \, V := \C^{m|n} \, $  correspond to Hermitian forms  $ \cB^\pm_{\cL_V} $  on  $ \, \cL_V \, $,  defined through  \eqref{eq: B_V --> cB_L}:  in detail, these read explicitly
\beq  \label{eq: cB_V}
  \cB^\pm_{\cL_V}\big( (x\,,\xi\,) \,,  \big(x',\xi'\,\big) \big)  \; =  \;  x \cdot \widetilde{x'} \, \mp \, \xi \cdot \widetilde{\xi'}
\eeq
   \indent   Now, according to  Definition \ref{def: adjoint}  we can consider the  {\sl adjoint\/}  of any  $ \, u \in \cL_{\fgl(V)}(A) = \fgl(m|n)(A) \, $  with respect to either  $ \cB^+_{\cL_V} $  or  $ \cB^-_{\cL_V} \, $,  that we will denote by  $ \, u^\star_+ \, $  and  $ \, u^\star_- \, $,  respectively.  After  Proposition \ref{adjoint-prop},  we also have corresponding real structures  $ \circledast_\pm $  on  $ \cL_{\fgl(V)} \, $:  \,in turn, by  Definition \ref{unit-def}  these will define two  {\sl unitary\/}  real forms of $ \cL_{\fgl(V)} \, $,  hence of  $ \, \fgl(V) = \fgl(m|n) \, $  as well. They are given
as follows.
 The explicit form of the adjoint maps is
  $$  u \, = \begin{pmatrix}
                a  &  \beta \;\;  \\
           \gamma  &  d \;\;
             \end{pmatrix}
 \;\; \mapsto \;\;  u^\star_\pm \, =
             \begin{pmatrix}
                a^\star_\pm  &  \beta^\star_\pm \,  \\
           \gamma^\star_\pm  &  d^\star_\pm \,
             \end{pmatrix}
 =
            \begin{pmatrix}
               \, \widetilde{a}^{\,t}  &  \mp\,\widetilde{\gamma}^{\,t} \;\,  \\
        \, \mp\,\widetilde{\beta}^{\,t}  &  \widetilde{d}^{\,t} \;\,
            \end{pmatrix}  $$
from which
we infer the explicit formula of the associated real structures, namely
\beq  \label{expl_real-struct_st_functorial}
 u \, = \begin{pmatrix}
                a  &  \beta \;\;  \\
           \gamma  &  d \;\;
        \end{pmatrix}
 \;\; \mapsto \;\;  u^{\circledast_\pm} \, =
            \begin{pmatrix}
               \, -\,\widetilde{a}^{\,t}  &  \mp \, i \,\widetilde{\gamma}^{\,t} \;\,  \\
        \, \mp \, i \,\widetilde{\beta}^{\,t}  &  -\,\widetilde{d}^{\,t} \;\,
            \end{pmatrix}
\eeq
\noindent
 With these real structures, the associated  {\sl unitary\/}  real forms   --- cf.\  Definition \ref{unit-def}  ---   are
\beq  \label{eq: stand_unit-form_gl(m|n)}
  \fu_{\cB^\pm_V}(V)(A)  \; = \;  \left\{
   \begin{pmatrix}
      a  &  \beta \;\;  \\
 \gamma  &  d \;\;
   \end{pmatrix} \in \fgl(m|n)(A) \;\Bigg|\;
 {{\displaystyle a = -\,\widetilde{a}^{\,t} \; , \;\; \beta = \mp\,i\,\widetilde{\gamma}^{\,t}} \atop {\displaystyle \; \gamma = \mp\,i\,\widetilde{\beta}^{\,t} \; , \;\; d = -\,\widetilde{d}^{\,t}}} \,\right\}
\eeq
                                                                   \par
   Notice that the real structures considered above were defined for the functor of points  $ \cL_{\fgl(V)} \, $.  If instead we look at the Lie superalgebra  $ \, \fgl(V) = \fgl(m|n) \, $  as a superspace, then the real structures  \eqref{expl_real-struct_st_functorial}  on  $ \cL_{\fgl(V)} $ corresponds to the real structures  $ \, \ast_\pm \, $  on  $ \, \fgl(V) = \fgl(m|n) \, $  given by
\beq  \label{expl_real-struct_st_superspace}
   M \, = \begin{pmatrix}
           \text{\rm a}  &  \text{\rm b} \;  \\
           \text{\rm c}  &  \text{\rm d} \;
        \end{pmatrix}
 \;\; \mapsto \;\;  M^{\ast_\pm} \, =
       \begin{pmatrix}
          \, -\,\overline{\text{\rm a}}^{\,t}  &  \mp\,i\,\overline{\text{\rm c}}^{\,t} \;\,  \\
          \, \mp\,i\,\overline{\text{\rm b}}^{\,t}  &  -\,\overline{\text{\rm d}}^{\,t} \;\,
       \end{pmatrix}
\eeq
 which have been previously introduced in  \cite{vsv}, \S 3.4.
%
%

\smallskip

   Similarly, the  {\sl unitary\/}  Lie (sub)superalgebra of  $ \fgl(m|n) $  associated with the real form in  \eqref{eq: stand_unit-form_gl(m|n)},  and representing the functor  $ \fu_{\cB^\pm_V}(V) \, $,  is
  $$  \fu_\cB(m|n)  \; = \;  \left\{
   \begin{pmatrix}
      \text{\rm a}  &  \text{\rm b} \;\;  \\
      \mp\,i\,\overline{\text{\rm b}}^{\,t}  &  \text{\rm d} \;\;
   \end{pmatrix}
       \,\bigg|\;\, \text{\rm a} = -\,\overline{\text{\rm a}}^{\,t} , \, \text{\rm d} = -\,\overline{\text{\rm d}}^{\,t} \,\right\}  $$
\end{free text}

\medskip

\begin{free text}  \label{sbsbsec: Grad_case}

 {\bf Graded real structures on  $ \, \fgl_{m|n} \, $.}  Let again  $ \, V := \C^{m|n} \, $  but consider now its associated functor  $ \cL_V $  as being defined on commutative superalgebras with a  {\sl graded\/}  real structure, hence  $ \, \cL_V : \salggrc \relbar\joinrel\longrightarrow \smod_\C \, $   --- just like in  Definition \ref{herm-def}.  Then we have two natural, consistent, non-degenerate Hermitian forms on  $ \cL_V \, $,  denoted  $ \cB^\pm_\gr \, $,  which are defined on objects by
\beq  \label{eq: cB_V - gr}
  \cB^\pm_\gr\big( (x\,,\xi\,) \,,  \big(x',\xi'\,\big) \big)  \; =  \;  x \cdot \widetilde{x'} \, \mp \, \xi \cdot \widetilde{\xi'}
\eeq
 Note that  \eqref{eq: cB_V - gr}  looks exactly like  \eqref{eq: cB_V}   --- where the functor is defined  $ \salg_\C^\st $  instead.
%
%
%
%
%
%
%
With the same arguments as in  \S \ref{sbsbsec: Stand_case}  above, we find the following explicit form of the adjoint maps
  $$  u \, = \begin{pmatrix}
                a  &  \beta \;\;  \\
           \gamma  &  d \;\;
             \end{pmatrix}
 \;\; \mapsto \;\;  u^\star_\pm \, =
             \begin{pmatrix}
                a^\star_\pm  &  \beta^\star_\pm \,  \\
           \gamma^\star_\pm  &  d^\star_\pm \,
             \end{pmatrix}
 =
            \begin{pmatrix}
               \, \widetilde{a}^{\,t}  &  \pm\widetilde{\gamma}^{\,t} \;\,  \\
        \, \mp\widetilde{\beta}^{\,t}  &  \widetilde{d}^{\,t} \;\,
            \end{pmatrix}  $$
%
 from which we get:
%
\beq  \label{expl_real-struct_st_functorial - BIS}
 u \, = \begin{pmatrix}
                a  &  \beta \;\;  \\
           \gamma  &  d \;\;
        \end{pmatrix}
 \;\; \mapsto \;\;  u^{\circledast_\pm} \, =
            \begin{pmatrix}
               \, -\widetilde{a}^{\,t}  &  \mp\widetilde{\gamma}^{\,t} \;\,  \\
        \, \pm\widetilde{\beta}^{\,t}  &  -\widetilde{d}^{\,t} \;\,
            \end{pmatrix}
\eeq
With these real structures, the associated  {\sl unitary\/}  real forms (via  Definition \ref{unit-def})  are given by
  $$  \fu_{\cB^\pm_V}(V)(A)  \,\; = \;\,  \left\{
   \begin{pmatrix}
      a  &  \beta \;\;  \\
 \gamma  &  d \;\;
   \end{pmatrix} \in \fgl(m|n)(A) \;\Bigg|\; {{\displaystyle a = -\widetilde{a}^{\,t} \; , \;\; \beta = \mp\widetilde{\gamma}^{\,t}} \atop {\displaystyle \; \gamma = \pm\widetilde{\beta}^{\,t} \; , \;\; d = -\widetilde{d}^{\,t}}} \,\right\}  $$
which can be re-written as
  $$  \fu_{\cB^\pm_V}(V)(A)  \,\; = \;\,  \left\{
   \begin{pmatrix}
      a  &  \beta \;\;  \\
 \pm\widetilde{\beta}^{\,t}  &  d \;\;
   \end{pmatrix} \in \fgl(m|n)(A) \;\Bigg|\; {{\displaystyle a = -\widetilde{a}^{\,t}} \atop {\displaystyle d = -\widetilde{d}^{\,t}}} \,\right\}  $$
                                                       \par
   Finally, although we have introduced the real structures directly on the functor  $ \, \cL_{\fgl(V)} \, $,  we can easily see that these structures
%
%
 $ \, \circledast_\pm \, $
 on  $ \cL_{\fgl(V)} $  actually correspond to the real structures  $ \, \ast_\pm : \fgl(V) \relbar\joinrel\relbar\joinrel\lra \fgl(V) \; $  on the Lie superalgebra  $ \, \fgl(V) = \fgl(m|n) \, $  given by
\beq  \label{expl_real-struct_st_Lie-salg - BIS}
 M \, = \begin{pmatrix}
                \text{\rm a}  &  \text{\rm b} \;  \\
                \text{\rm c}  &  \text{\rm d} \;
        \end{pmatrix}
 \;\; \mapsto \;\;  M^{\ast_\pm} \, =
            \begin{pmatrix}
               \, -\overline{\text{\rm a}}^{\,t}  &  \mp\overline{\text{\rm c}}^{\,t} \,  \\
        \, \pm\overline{\text{\rm b}}^{\,t}  &  -\overline{\text{\rm d}}^{\,t} \,
            \end{pmatrix}
\eeq
   In particular,  {\sl  $ \, \ast_+ \, $  has a neat expression in terms of ``supertranspose'' as}  $ \; M^{\ast_+} = -\overline{M}^{\,st} \; $ commonly used by physicists  (see \cite{fl}  and also \cite{serganova,pe}).
%
%
%
\end{free text}

\medskip

\begin{free text}  \label{sbsbsec: Stand+grad_case}
%
%
 {\bf Standard and graded real structures induced by a supersymmetric form.}
 For  $ \, n = 2\,t \, $,  let  $ \, \phi_\st \, $  be the  {\sl standard\/}  real structure on  $ \, V := \C^{m|2t} \, $  considered in  \S \ref{sbsbsec: Stand_case}.  For  $ \, A \in \salg^\st_\C \, $, we write any element of  $ \, V(A) = \C^{m|2t}(A) = A_\zero^{\,m} \times A_\uno^{\,2t} \, $  as a triple  $ \, (x\,,\xi_+\,,\xi_-) \, $  with  $ \, x \in A_\zero^{\,m} \, $  and  $ \, \xi_\pm \in A_\uno^{\,t} \, $. Accordingly, any  $ \, u \in \cL_{\fgl(V)}(A) = \fgl(m|\,2\,t)(A) \, $   --- for  $ \, A \in \salg^\st_\C \, $  ---   will be written as a block matrix
 $ \; u \, = \begin{pmatrix}
                a  &  \beta_+  &  \beta_- \;  \\
      \, \gamma_+  &  d_{\scriptscriptstyle +,+}  &  d_{\scriptscriptstyle +,-} \;  \\
      \, \gamma_-  &  d_{\scriptscriptstyle -,+}  &  d_{\scriptscriptstyle -,-} \;
             \end{pmatrix} \; $
where  $ a $  and  $ d_{\scriptscriptstyle \pm,\pm} $  have entries in  $ A_\zero $  and  $ \beta_\pm $  and  $ \gamma_\pm $  have them in  $ A_\uno \, $;  \,in turn, we will write its adjoint as
 $ \; u^\star_\pm \, = \begin{pmatrix}
            a^\star  &  \beta^\star_+  &  \beta^\star_- \;  \\
  \, \gamma^\star_+  &  d^\star_{\scriptscriptstyle +,+}  &  d^\star_{\scriptscriptstyle +,-} \;  \\
  \, \gamma^\star_-  &  d^\star_{\scriptscriptstyle -,+}  &  d^\star_{\scriptscriptstyle -,-} \;
             \end{pmatrix} \; $.
 \vskip5pt
   With these conventions, the (unique!) Hermitian form  $ \cB^\pm_{\cL_V} $  on  $ \, \cL_V \, $,  that by  Lemma \ref{B_V ---> cB_L}   --- via  \eqref{eq: B_V --> cB_L}  ---   correspond to  $ B^\pm_{\phi_\st} $  on  $ V $  is given explicitly by
\beq  \label{eq: cB_V - st}
  \cB^\pm_{\phi_\st}\big( (x\,,\xi_+\,,\xi_-\,) \,,  \big(x',\xi'_+\,,\xi'_-\,\big) \big)  \; =  \;  x \cdot \widetilde{x'} \, + \, i \, \xi_+ \!\cdot \widetilde{\xi'_-} \, - \, i \, \xi_- \!\cdot \widetilde{\xi'_+}
\eeq
(we still write a superscript  ``$ \, \pm \, $'',  yet it is irrelevant).  Using it, we compute the ``adjoint''  $ \, u^\star := u^\star_\pm \, $  (again unique!) applying the defining conditions  \eqref{adjoint}  to the nine homogeneous summands (that here we read as block-entries) of the matrix
 $ \; u \, =
    \begin{pmatrix}
                a  &  \beta_+  &  \beta_- \;  \\
      \, \gamma_+  &  d_{\scriptscriptstyle +,+}  &  d_{\scriptscriptstyle +,-} \;  \\
      \, \gamma_-  &  d_{\scriptscriptstyle -,+}  &  d_{\scriptscriptstyle -,-} \;
    \end{pmatrix} \; $.
%
%
 The explicit calculations follow again the same arguments as in  \S \ref{sbsbsec: Stand_case}  above; eventually, we find the following explicit form of the adjoint maps
  $$  u
%
%
 \;\; \mapsto \;\;  u^\star \, =
             \begin{pmatrix}
          a^\star  &  \beta^\star_+  &  \beta^\star_- \,  \\
   \gamma^\star_+  &  d^\star_{\scriptscriptstyle +,+}  &  d^\star_{\scriptscriptstyle +,-} \;  \\
   \gamma^\star_-  &  d^\star_{\scriptscriptstyle -,+}  &  d^\star_{\scriptscriptstyle -,-} \;
             \end{pmatrix}
 =
            \begin{pmatrix}
  \widetilde{a}^{\,t}  &  +\,i\,\widetilde{\gamma_-}^{\,t}  &  -\,i\,\widetilde{\gamma_+}^{\,t} \;\;  \\
   -\,i\,\widetilde{\beta_-}^{\,t}  &  +\widetilde{d_{\scriptscriptstyle -,-}}^{\,t}  &  -\widetilde{d_{\scriptscriptstyle +,-}}^{\,t} \;  \\
   +\,i\,\widetilde{\beta_+}^{\,t}  &  -\widetilde{d_{\scriptscriptstyle -,+}}^{\,t}  &  +\widetilde{d_{\scriptscriptstyle +,+}}^{\,t} \;
            \end{pmatrix}  $$
%
%
 and then from the latter we deduce the the associated real structure   --- as in  \eqref{eq: adj->real-struct_stand-case}  ---   namely
\beq  \label{expl_real-struct_st_(sympl)_functorial}
   u \, = \begin{pmatrix}
          a  &  \beta_+  &  \beta_- \;  \\
   \gamma_+  &  d_{\scriptscriptstyle +,+}  &  d_{\scriptscriptstyle +,-} \;  \\
   \gamma_-  &  d_{\scriptscriptstyle -,+}  &  d_{\scriptscriptstyle -,-} \;
             \end{pmatrix}
 \,\; \mapsto \;\;  u^\circledast :=
           \begin{pmatrix}
  -\widetilde{a}^{\,t}  &  -\widetilde{\gamma_-}^{\,t}  &  +\widetilde{\gamma_+}^{\,t} \;  \\
   +\widetilde{\beta_-}^{\,t}  &  -\widetilde{d_{\scriptscriptstyle -,-}}^{\,t}  &  +\widetilde{d_{\scriptscriptstyle +,-}}^{\,t} \;  \\
   -\widetilde{\beta_+}^{\,t}  &  +\widetilde{d_{\scriptscriptstyle -,+}}^{\,t}  &  -\widetilde{d_{\scriptscriptstyle +,+}}^{\,t} \;
           \end{pmatrix}
\eeq
Finally, the  {\sl unitary\/}  real form associated   --- by  Definition \ref{unit-def}  ---   with this real structure is
  $$  \fu_{\cB^\pm_{\phi_\st}}\!(V)(A)  \; = \,  \left\{\! \left.
   \begin{pmatrix}
          a  &  \beta_+  &  \beta_-  \\
   +\widetilde{\beta_-}^{\,t}  &  d_{\scriptscriptstyle +,+}  &  d_{\scriptscriptstyle +,-}  \\
   -\widetilde{\beta_+}^{\,t}  &  d_{\scriptscriptstyle -,+}  &  -\widetilde{d_{\scriptscriptstyle +,+}}^{\,t}
   \end{pmatrix} \! \in \fgl(m|2t)(A) \;\right|\,
{{\displaystyle a \; = \; -\,\widetilde{a}^{\,t}} \atop {\displaystyle d_{\scriptscriptstyle \pm,\mp} = \widetilde{d_{\scriptscriptstyle \pm,\mp}}^{t}}} \,\right\}  $$
                                                                   \par
   Note that the map  $ \; u \mapsto u^\circledast \; $  is the real structure for the functor of points  $ \cL_{\fgl(V)} \, $.  If instead we look at the Lie superalgebra  $ \, \fgl(V) = \fgl(m|\,2\,t) \, $  as a superspace, then the real structure  \eqref{expl_real-struct_st_(sympl)_functorial}  on  $ \cL_{\fgl(V)} $ corresponds to the real structure on  $ \, \fgl(V) = \fgl(m|\,2\,t) \, $  described by
%
%
  $$  M \, = \begin{pmatrix}
          \text{\rm a}  &  \text{\rm b}_+  &  \text{\rm b}_- \;  \\
   \text{\rm c}_+  &  \text{\rm d}_{\scriptscriptstyle +,+}  &  \text{\rm d}_{\scriptscriptstyle +,-} \;  \\
   \text{\rm c}_-  &  \text{\rm d}_{\scriptscriptstyle -,+}  &  \text{\rm d}_{\scriptscriptstyle -,-} \;
             \end{pmatrix}
 \; \mapsto \;  M^\ast :=
            \begin{pmatrix}
  -\overline{\text{\rm a}}^{\,t}  &  -\overline{\text{\rm c}_-}^{\,t}  &  +\overline{\text{\rm c}_+}^{\,t} \;  \\
   +\overline{\text{\rm b}_-}^{\,t}  &  -\overline{\text{\rm d}_{\scriptscriptstyle -,-}}^{\,t}  &  +\overline{\text{\rm d}_{\scriptscriptstyle +,-}}^{\,t} \;  \\
   -\overline{\text{\rm b}_+}^{\,t}  &  +\overline{\text{\rm d}_{\scriptscriptstyle -,+}}^{\,t}  &  -\overline{\text{\rm d}_{\scriptscriptstyle +,+}}^{\,t} \;
            \end{pmatrix}  $$
 Similarly, the  {\sl unitary\/}  Lie (sub)superalgebra of  $ \fgl(m|\,2\,t) $  associated with this real form, and representing the functor  $ \fu_{\cB^\pm_{\phi_\st}}\!(V) \, $,  is
  $$  \fu_{\cB^\pm_{\phi_\st}}\!(m|\,2\,t)  \;\; = \;\;  \left\{ \left.
   \begin{pmatrix}
      \text{\rm a}  &  \text{\rm b}_+  &  \text{\rm b}_- \;  \\
      +\overline{\text{\rm b}_-}^{\,t}  &  \text{\rm d}_{\scriptscriptstyle +,+}  &  \text{\rm d}_{\scriptscriptstyle +,-} \;  \\
      -\overline{\text{\rm b}_+}^{\,t}  &  \text{\rm d}_{\scriptscriptstyle -,+}  &  -\overline{\text{\rm d}_{\scriptscriptstyle +,+}}^{\,t} \;
   \end{pmatrix}
       \,\right|\;\, {{\text{\rm a} \; = \; -\,\overline{\text{\rm a}}^{\,t}_{{}_{\phantom{|}}}} \atop {\text{\rm d}_{\scriptscriptstyle \pm,\mp} = \overline{\text{\rm d}_{\scriptscriptstyle \pm,\mp}}^{\,t}}} \,\right\}  $$
 \vskip7pt
 Finally, a parallel construction starting from the  {\sl graded\/}  real structure  $ \; \phi_\gr : \C^{m|2t} \!\relbar\joinrel\lra \C^{m|2t} \; $  given by  $ \, \phi_\gr(z,\zeta_+,\zeta_-) := \big(\,\zbar\,,+\overline{\zeta_-}\,,-\overline{\zeta_+}\,\big) \, $  provides again, in the first steps, the Hermitian forms  \eqref{eq: cB_V - gr}  of  \S \ref{sbsbsec: Grad_case},  hence the final outcome will be a special instance of what we found therein.
\end{free text}

\medskip

\section{Compact real forms}
%
%

 In this section we describe real forms of basic Lie superalgebras  (see \cite{serganova, parker},  we give a new notion of ``super compactness'', going beyond  \cite{cfv,chuah-mz},  and we describe the associated real structures
%
%
 in the graded and standard case.  We begin with some notation.

\smallskip

\begin{definition}
%
%
 Let  $ V $  be any complex super vector space.  For any  $ \, s \in \{2\,,4\} \, $,  let  $ \, \aut^{\;{}_\R}_{2,s}(V) \, $  be the set of automorphisms  $ \vartheta $  of  $ V $  {\sl as a  {\it real}  vector superspace\/}  such that  $ \; \vartheta\big|_{V_{\overline{z}}} \not= \, \text{\rm id}_{V_{\overline{z}}} \; $  for  $ \, {\overline{z}} \in \Z_2 \, $,  $ \; \vartheta^2\big|_{V_\zero} = \, \text{\rm id}_{V_\zero} \; $  and  $ \; \vartheta^2\big|_{V_\uno} = \, +\text{\rm id}_{V_\zero} \; $  for  $ \, s := 2 \, $  while  $ \; \vartheta^2\big|_{V_\uno} = \, -\text{\rm id}_{V_\zero} \; $  for  $ \, s := 4 \, $.  Then we set:
                                                                            \par
   {\it (a)} \quad  $ \baut_{2,s}(V) \; := \; \big\{\, \theta \in \aut^{\;{}_\R}_{2,s}(V) \,\big|\, \theta \text{\ is\  $ \C $--antilinear} \,\big\} \; $;
                                                                            \par
   {\it (b)} \quad  $ \aut_{2,s}(V) \; := \; \big\{\, \sigma \in \aut^{\;{}_\R}_{2,s}(V) \,\big|\, \sigma \text{\ is\  $ \C $--linear} \,\big\} \; $.
 \vskip3pt
   If in addition  $ \, V = \mathfrak{A} \, $  is a complex associative superalgebra, resp.\ a complex Lie superalgebra, by  $ \, \baut_{2,s}(\mathfrak{A}) \, $ and  $ \, \aut_{2,s}(\mathfrak{A}) \, $  we mean the similar objects defined as above but starting from the set  $ \, \aut^{\;{}_\R}_{2,s}(\mathfrak{A}) \, $  of automorphisms of  $ \mathfrak{A} $  as a  {\it real\/}  (associative, resp.\ Lie) superalgebra with the extra conditions specified above.
\end{definition}

   After  Definition \ref{super-str1},  the elements of  $ \baut_{2,s}(V) $  are exactly the real structures on  $ V \, $;  we will presently show that in special cases these can be classified by the elements of  $ \aut_{2,s}(V) $  too.

\smallskip

\subsection{Real structures of basic (simple) Lie superalgebras}
 Let  $ \fg $  be a complex Lie superalgebra which is  {\sl contragredient},  in the sense of  \cite{kac}, \S 2.5.  Thus  $ \fg $  is defined via a Cartan matrix  $ \, A := {\big( a_{i,j} \big)}_{i,j \in I} \, $   --- with  $ \, I = \{1,\dots,r\} \, $  ---   with entries in  $ \C \, $,  a set of generators  $ \, x^+_i \, $,  $ x^-_i \, $,  $ h_i $  (for all  $ \, i \in I := \{1,\dots,r\} \, $),  of parity  $ \, |h_i| := \zero \, $,  $ \, \big| x^\pm_i \big| := \zero \, $  if  $ \, i \not\in \tau \, $,  $ \, \big| x^\pm_i \big| := \uno \, $  if  $ \, i \in \tau \, $,  for some fixed subset  $ \, \tau \subseteq I \, $.
                                                                    \par
   In addition, we shall say that the set of generators  $ \, {\big\{ x^+_i \, , h_i \, , x^-_i \big\}}_{i \in I} \, $  is  {\it distinguished\/}  if  $ \, |\tau| = 1 \, $   --- in other words,
{\sl there exists one and only one positive simple root which is  {\it odd}}   --- cf.\ \cite{kac, chuah-mz}.

\vskip5pt

\begin{proposition}  \label{omega}
 Let  $ \, \fg $  be contragredient, built out of a Cartan matrix  $ A $  with entries in  $ \mathbb{R} \, $.  Then there exists a unique  $ \, \omega \in \baut_{2,4}(\fg) \, $  such that
  $$  \omega\big(h_j\big) = -h_j  \;\;\; \forall \;\; j \; ,  \qquad  \omega\big(x^\pm_i\big) = -x^\mp_i  \;\;\; \forall \;\; i \not\in \tau \; ,  \qquad  \omega\big(x^\pm_i\big) = \pm x^\mp_i  \;\;\; \forall \;\; i \in \tau $$
\end{proposition}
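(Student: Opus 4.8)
The plan is to use the presentation of a contragredient Lie superalgebra by Chevalley-type generators and relations: define $\omega$ on the generators by the three displayed formulas and extend. Recall (following \cite{kac}, \S 2.5) that $\, \fg = \widetilde{\fg}\big/\mathfrak{r} \,$, where $\widetilde{\fg} = \widetilde{\fg}(A,\tau)$ is generated by $\, x_i^\pm, h_i \,$ ($i \in I$) subject to $\, [h_i,h_j]=0 \,$, $\, [x_i^+,x_j^-]=\delta_{ij}h_i \,$, $\, [h_i,x_j^\pm]=\pm a_{ij}x_j^\pm \,$ (with parities prescribed by $\tau$), and $\mathfrak{r}$ is the unique maximal graded ideal of $\widetilde{\fg}$ meeting the Cartan subalgebra $\, \fh = \Span_\C\{h_j\} \,$ trivially. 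First I would declare $\omega$ on generators by $\, \omega(h_j)=-h_j \,$, $\, \omega(x_i^\pm)=-x_i^\mp \,$ for $\, i\notin\tau \,$ and $\, \omega(x_i^\pm)=\pm x_i^\mp \,$ for $\, i\in\tau \,$, and extend it to $\widetilde{\fg}$ as the unique $\C$--antilinear, parity-preserving map compatible with the superbracket.

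The key step is to verify that this prescription respects the defining relations of $\widetilde{\fg}$; since $\omega$ is antilinear, this means $\omega$ must carry each relation to the scalar-conjugated identity. The relations $\, [h_i,h_j]=0 \,$ are immediate from $\, \omega(h_i)=-h_i \,$. For $\, [x_i^+,x_j^-]=\delta_{ij}h_i \,$ one computes $\, [\omega(x_i^+),\omega(x_j^-)] \,$ separately in the cases $\, i,j\notin\tau \,$ and $\, i,j\in\tau \,$, using the sign rules together with the antisymmetry, resp.\ symmetry, of the superbracket on two even, resp.\ two odd, elements, and in both cases obtains $\, -\delta_{ij}h_i=\omega(\delta_{ij}h_i) \,$. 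The relation $\, [h_i,x_j^\pm]=\pm a_{ij}x_j^\pm \,$ yields $\, [-h_i,\omega(x_j^\pm)]=\pm a_{ij}\,\omega(x_j^\pm) \,$, whereas antilinearity forces the right-hand side of the image to be $\, \pm\overline{a_{ij}}\,\omega(x_j^\pm) \,$; the two agree precisely because, for a basic simple contragredient $\fg$, the Cartan matrix may be normalized to have real entries, so that $\, a_{ij}=\overline{a_{ij}} \,$. This is the one place where reality of $A$ intervenes, and I expect it to be the main obstacle to watch; granting it, $\omega$ is a well-defined $\C$--antilinear Lie superalgebra endomorphism of $\widetilde{\fg}$, and since $\, \omega^2 \,$ already fixes or negates the generators (see below) it is in fact bijective.

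Next I would descend $\omega$ to $\fg$. As $\, \omega(\fh)=\fh \,$ and $\omega$ is a (real) parity-preserving automorphism of $\widetilde{\fg}$, the image $\, \omega(\mathfrak{r}) \,$ is again a graded ideal meeting $\fh$ trivially; by maximality of $\mathfrak{r}$ this forces $\, \omega(\mathfrak{r})=\mathfrak{r} \,$, so $\omega$ passes to $\, \fg=\widetilde{\fg}/\mathfrak{r} \,$. Computing $\omega^2$ on generators gives $\, \omega^2(h_j)=h_j \,$, $\, \omega^2(x_i^\pm)=x_i^\pm \,$ for $\, i\notin\tau \,$ and $\, \omega^2(x_i^\pm)=-x_i^\pm \,$ for $\, i\in\tau \,$; hence $\omega^2$, being the $\C$--linear automorphism agreeing with the canonical parity automorphism on a generating set, equals it, so $\, \omega^2\big|_{\fg_\zero}=\id_{\fg_\zero} \,$ and $\, \omega^2\big|_{\fg_\uno}=-\id_{\fg_\uno} \,$. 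Thus $\omega$ is bijective and lies in $\, \baut_{2,4}(\fg) \,$, the conditions $\, \omega\big|_{\fg_{\overline{z}}}\neq\id \,$ being obvious from $\, \omega(h_j)=-h_j \,$ and $\, \omega(x_i^\pm)=\pm x_i^\mp \,$.

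Finally, uniqueness is formal: the $\, x_i^\pm, h_i \,$ generate $\fg$, a $\C$--antilinear Lie superalgebra morphism is determined by its values on a generating set, and those values are prescribed, so at most one such $\omega$ exists. Everything outside the relation-checking of the second paragraph is thus either formal (the descent, via the intrinsic maximality of $\mathfrak{r}$) or a routine computation (the evaluation of $\omega^2$), and the crux of the argument is reconciling antilinearity with the structure constants $a_{ij}$.
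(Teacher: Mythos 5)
Your proof is correct and follows essentially the same route the paper takes: the paper's proof is a one-line citation of Musson, Propositions 5.1.4 and 5.2.1 (construction of (anti)automorphisms of a contragredient Lie superalgebra from their action on the Chevalley generators, via the presentation $\widetilde{\fg}(A,\tau)\big/\mathfrak{r}$ and descent through the maximal graded ideal $\mathfrak{r}$ meeting $\fh$ trivially), and you have simply written out that argument in its antilinear variant, including the correct computation of $\omega^2$ as the parity automorphism. Your remark that antilinearity forces $a_{ij}=\overline{a_{ij}}$ in checking $[h_i,x_j^\pm]=\pm a_{ij}x_j^\pm$ is accurate and identifies a genuine implicit hypothesis that the paper's citation-style proof does not surface (for $D(2,1;a)$ with $a\notin\R$ no such $\omega$ exists), so flagging it is a point in your favour rather than a gap.
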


\begin{proof}
 This is the ``antilinear counterpart'' of a well-known result which guarantees the existence and uniqueness of a  $ \C $--linear  automorphism  $ \omega' $  of  $ \fg $  whose action on the generators is the same as  $ \omega $'s.  One proves it along the same lines as in  \cite{musson},  Proposition 5.1.3 and 5.2.1.
\end{proof}

\smallskip

   Note that when  $ \fg $  is a semisimple Lie algebra, then  $ \omega_\zero $  is the classical Cartan involution corresponding to its compact form  (see \cite{kn}, VI, \S 1).

\smallskip

   From now on,  {\sl we assume our complex Lie superalgebra\/  $ \fg $  to be simple of  {\it basic\/}  type\/},  hence   --- according to the classification  $ \fg $  is of one of the following types:
\beq  \label{list}
   A(m|n) \; ,  \,\;\;  B(m|m) \; ,  \,\;\;  C(n) \; ,  \,\;\;  D(m|n) \; ,  \,\;\;
D(2,1;a) \; ,  \,\;\;  G(3) \; ,  \,\;\;  F(4)
\eeq
 \textit{Moreover, for type  $ D(2,1;a) $  we assume that  $ \, a \in \mathbb{R} \, $}.  In particular, our  $ \fg $  is  {\sl contragredient},  and  Proposition \ref{omega}  above applies.

\smallskip

   We shall now collect a few technical results that we need later.

\medskip

%
%

\begin{lemma}
 Let  $ \fg $  be simple of basic type as in  \eqref{list}  above, with  $ \, a \in \mathbb{R} \, $  for type  $ D(2,1;a) \, $.  Then there exists a suitable positive system  $ \Delta^+ $  and suitable root vectors  $ x_{\pm\alpha} \, \big (\, \pm\alpha \in \Delta^\pm = \Delta_\zero^\pm \cup \Delta_\uno^\pm \,\big) \, $  for which  $ \omega $  as in  Proposition \ref{omega}  gives
  $$  \omega\big(x_{\pm\alpha}\big) \, = \, -x_{\mp\alpha}  \;\;\quad  \forall \;\; \alpha \text{\ even simple} \; ,  \quad \qquad
  \omega\big(x_{\pm\alpha}\big) \, = \, \pm x_{\mp\alpha}  \;\;\quad  \forall \;\; \alpha \text{\ odd simple}  \; . $$
\end{lemma}

\begin{proof}
 Indeed, for  $ \fg $  as in the claim it is known that we can select a distinguished Dynkin diagram, as in  \cite{kac}, p.\ 56, Table VI.  Accordingly, we have unique associated sets of simple roots, of simple root vectors, and of positive/negative roots, as well as a unique associated Cartan matrix.  Then the claim follows as a special instance of  Proposition \ref{omega}.
\end{proof}

\smallskip

\begin{lemma}  \label{parker0}
 Any inner automorphism  $ \phi_\zero  $  of\/  $ \fg_\zero $  can be extended to an inner automorphism  $ \phi $  of\/  $ \fg $   --- i.e.\ one of the form  $ \, \phi = \exp\big(\ad(n)\big) \, $  with  $ \, n \in \fg_\zero \, $.
\end{lemma}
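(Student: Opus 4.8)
The plan is to lift $\phi_\zero$ one generator at a time, replacing everywhere the adjoint action of $\fg_\zero$ \emph{on itself} by the adjoint action of those same elements \emph{on the whole of} $\fg$. First I would recall that, $\fg_\zero$ being reductive, an inner automorphism $\phi_\zero$ lies by definition in the group generated by the elementary automorphisms $\exp\big(\ad_{\fg_\zero}(n)\big)$ with $n \in \fg_\zero$, so I may write $\, \phi_\zero = \prod_{k} \exp\big(\ad_{\fg_\zero}(n_k)\big) \,$ for finitely many $\, n_k \in \fg_\zero \,$. The crucial observation is that each such $n_k$ is an \emph{even} element of $\fg$: hence $\ad_\fg(n_k)$ is a derivation of $\fg$ of $\Z_2$--degree $\zero$, it preserves the grading $\, \fg = \fg_\zero \oplus \fg_\uno \,$, and its restriction to $\fg_\zero$ is precisely $\ad_{\fg_\zero}(n_k)$ since $\, [\fg_\zero,\fg_\zero] \subseteq \fg_\zero \,$.

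Next I would set $\, \phi := \prod_k \exp\big(\ad_\fg(n_k)\big) \,$. As $\fg$ is finite dimensional, each exponential converges, and because $\ad_\fg(n_k)$ is an even derivation, the Leibniz rule gives $\, \exp(D)\big[x,y\big] = \big[\exp(D)x,\exp(D)y\big] \,$ for $\, D = \ad_\fg(n_k) \,$, so $\exp\big(\ad_\fg(n_k)\big)$ is a genuine automorphism of $\fg$ as a Lie superalgebra. Thus $\phi$ is an inner automorphism of $\fg$, of the form asserted in the statement. To check compatibility on the even part, I would use that $\ad_\fg(n_k)$ stabilizes $\fg_\zero$ and acts there as $\ad_{\fg_\zero}(n_k)$, whence $\, \exp\big(\ad_\fg(n_k)\big)\big|_{\fg_\zero} = \exp\big(\ad_{\fg_\zero}(n_k)\big) \,$ for every $k$; multiplying over $k$ yields $\, \phi\big|_{\fg_\zero} = \phi_\zero \,$, as required.

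The main point is conceptual rather than computational: one only has to keep the exponents inside $\fg_\zero$, so that the very elements realizing $\phi_\zero$ produce bona fide automorphisms of the \emph{superalgebra} $\fg$ through $\ad_\fg$ instead of $\ad_{\fg_\zero}$. Once this is arranged the extension is automatic, and I expect no analytic obstacle, since in finite dimension the exponential of a derivation always converges to an automorphism. I would finally remark that the extension is by no means unique --- when $\fg_\zero$ has nonzero centre, the exponential of a central element restricts to $\text{\rm id}_{\fg_\zero}$ while acting as a nontrivial scalar on $\fg_\uno$ --- but this affects only uniqueness, not the existence claimed here.
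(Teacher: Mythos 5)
Your proposal is correct and is essentially the paper's own argument: the paper takes $\phi_\zero = \exp\big(\ad(n)\big)$ for a single $n \in \fg_\zero$ (matching the statement's parenthetical definition of ``inner'') and extends it by $\phi_\uno := \exp\big(\ad_\uno(n)\big)$, i.e.\ by exponentiating the same even element acting on all of $\fg$. Your version merely applies this factor by factor to a product of elementary automorphisms, which changes nothing essential.
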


\begin{proof}
 This is proved, in the  {\sl standard\/}  case, by Proposition 2.1 in  \cite{parker}.  In short, given  $ \, \phi_\zero = \exp\big(\ad(n)\big) \, $  on  $ \fg_\zero $  (with  $ \, n \in \fg_\zero \, $),  we can take $ \, \phi_\uno := \exp\big(\ad_\uno(n)\big) \, $,  where  $ \ad_\uno $  denotes the adjoint action of $ \fg_\zero \, $  on  $ \fg_\uno \, $.  In addition, by a straightforward analysis one checks that the very same method actually applies to the  {\sl graded\/}  case as well.
\end{proof}

   The previous lemma has an immediate consequence, whose proof is straightforward.

\smallskip

\begin{lemma}  \label{parker1}
 Let  $ \, \sigma \in \baut_{2,s}(\fg) \, $   --- or  $ \, \sigma \in \aut_{2,s}(\fg) \, $  ---   $ \, s \in \{2\,,4\} \, $,  and let  $ \, \sigma'_\zero \in \baut_2(\fg_\zero) \, $   --- or $ \, \sigma'_\zero \in \aut_2(\fg_\zero) \, $,  respectively.  If  $ \, \sigma'_\zero = \phi_\zero \circ \sigma_\zero \circ \phi_\zero^{-1} \, $  for an inner automorphism  $ \phi_\zero $  of  $ \fg_\zero \, $,  then  $ \sigma'_\zero $  extends to  $ \, \sigma' = \sigma'_\zero + \sigma'_\uno \in \baut_{2,s}(\fg) \, $   --- or to  $ \, \sigma' = \sigma'_\zero + \sigma'_\uno \in \aut_{2,s}(\fg) \, $,  respectively ---   given by  $ \, \sigma' := \phi \circ \sigma \circ \phi^{-1} \, $,  with  $ \, \phi = \phi_\zero + \phi_\uno \, $  as in  Lemma \ref{parker0}  above.
\end{lemma}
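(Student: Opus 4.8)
The plan is to build the extension directly by the conjugation recipe in the statement and then verify, one condition at a time, that the result lands in the prescribed class; this is where the hypotheses of Lemma~\ref{parker0} do all the work. First I would apply Lemma~\ref{parker0} to extend the inner automorphism $\phi_\zero$ of $\fg_\zero$ to an inner automorphism $\phi = \phi_\zero + \phi_\uno = \exp\big(\ad(n)\big)$ of $\fg$, with $n \in \fg_\zero$. The key structural observation is that such a $\phi$ is $\C$--linear and parity--preserving, because $\ad(n)$ is $\C$--linear and even (as $n \in \fg_\zero$), hence so is its exponential. I would then set $\sigma' := \phi \circ \sigma \circ \phi^{-1}$ and check that $\sigma' \in \baut_{2,s}(\fg)$ (resp.\ $\aut_{2,s}(\fg)$).

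The verification reduces to the defining conditions of $\aut^{\;{}_\R}_{2,s}(\fg)$ together with the (anti)linearity requirement. For the linearity type: since $\phi$ and $\phi^{-1}$ are $\C$--linear while $\sigma$ is $\C$--antilinear (resp.\ $\C$--linear), the composite $\phi \circ \sigma \circ \phi^{-1}$ is again $\C$--antilinear (resp.\ $\C$--linear), so it stays in $\baut$ (resp.\ $\aut$). For parity: both $\phi^{\pm 1}$ and $\sigma$ preserve the $\Z_2$--grading, hence $\sigma'$ does as well, and $\sigma'\big|_{\fg_{\overline{z}}} = \phi\big|_{\fg_{\overline{z}}} \circ \sigma\big|_{\fg_{\overline{z}}} \circ \phi^{-1}\big|_{\fg_{\overline{z}}}$ for each $\overline{z}$. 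For the squaring conditions I would use $\sigma'^{2} = \phi \circ \sigma^{2} \circ \phi^{-1}$ together with the fact that $\phi$ fixes the grading, reading off $\sigma'^{2}\big|_{\fg_\zero} = \phi \circ \id_{\fg_\zero} \circ \phi^{-1} = \id_{\fg_\zero}$ and $\sigma'^{2}\big|_{\fg_\uno} = \phi \circ (\pm \id_{\fg_\uno}) \circ \phi^{-1} = \pm \id_{\fg_\uno}$, matching the $s = 2$ (resp.\ $s = 4$) condition exactly as it held for $\sigma$. For non--triviality on each graded piece: conjugation by an automorphism sends the identity to the identity, so $\sigma'\big|_{\fg_{\overline{z}}} = \id$ would force $\sigma\big|_{\fg_{\overline{z}}} = \id$, contradicting $\sigma \in \baut_{2,s}(\fg)$; hence $\sigma'\big|_{\fg_{\overline{z}}} \neq \id$. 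Finally, being a conjugate of a Lie superalgebra automorphism by a Lie superalgebra automorphism, $\sigma'$ preserves the superbracket.

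It then remains only to identify the even part with the prescribed datum. Restricting to $\fg_\zero$ and using $\phi\big|_{\fg_\zero} = \phi_\zero$ and $\sigma\big|_{\fg_\zero} = \sigma_\zero$ yields $\sigma'\big|_{\fg_\zero} = \phi_\zero \circ \sigma_\zero \circ \phi_\zero^{-1} = \sigma'_\zero$, so that $\sigma' = \sigma'_\zero + \sigma'_\uno$ genuinely extends the given $\sigma'_\zero$, as required. I do not anticipate a real obstacle here: the statement is indeed an immediate consequence of Lemma~\ref{parker0}, and the single point deserving care is the interaction of antilinearity with conjugation. This is exactly where the $\C$--linearity of the inner automorphism $\phi$ (guaranteed by its form $\exp(\ad(n))$ with $n \in \fg_\zero$) is essential, since conjugating by an antilinear map would instead flip the type between $\baut$ and $\aut$.
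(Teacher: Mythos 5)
Your proposal is correct and follows exactly the route the paper intends: the paper dismisses this lemma as an ``immediate consequence'' of Lemma~\ref{parker0} with a ``straightforward'' proof, and your write-up simply supplies the omitted verification (conjugation by the even, $\C$--linear inner automorphism $\phi=\exp(\ad(n))$ preserves antilinearity type, parity, the squaring conditions, non-triviality on each graded piece, and the bracket, while restricting to $\sigma'_\zero$ on $\fg_\zero$). Nothing is missing; your closing remark that $\C$--linearity of $\phi$ is the one point where care is needed is exactly right.
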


\smallskip

   When  $ \; \sigma' = \phi \circ \sigma \circ \phi^{-1} \, $,  \, with  $ \, \sigma \in \baut_{2,s}(\fg) \, $,  $ \, s \in \{2\,,4\} \, $,  for an inner automorphism  $ \phi $,  we will say that  {\sl $ \sigma $  and  $ \sigma' $  are  {\it inner-isomorphic}},  and we will write  $ \, \sigma \simeq \sigma' \, $.

\smallskip

\begin{lemma}  \label{parker2}

 Let  $ \, \sigma, \sigma' \in \baut_{2,s}(\fg) \, $,  $ \, s \in \{2\,,4\} \, $,  with  $ \, \sigma_\zero = \sigma'_\zero \; $.  Then:
 \vskip3pt
   (a) \;  if\/  $ \fg $  is of type 1, then  $ \; \sigma' \, \simeq \, \sigma_\zero \pm \sigma_\uno \; $;
 \vskip3pt
   (b) \;  if\/  $ \fg $  is of type 2, then  $ \; \sigma' \, = \, \sigma_\zero \pm \sigma_\uno \; $.
\end{lemma}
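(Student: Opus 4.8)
The plan is to transfer the problem to a single \emph{linear} automorphism and exploit the $\fg_\zero$--module structure of $\fg_\uno$. Since $\sigma\in\baut_{2,s}(\fg)$ satisfies $\sigma^2|_{\fg_\zero}=\id$, it is invertible, so I set $\tau:=\sigma'\circ\sigma^{-1}$. Being the composite of two $\C$--antilinear Lie superalgebra automorphisms, $\tau$ is $\C$--linear and parity preserving, and the hypothesis $\sigma_\zero=\sigma'_\zero$ together with $\sigma_\zero^2=\id_{\fg_\zero}$ gives $\tau|_{\fg_\zero}=\sigma'_\zero\circ\sigma_\zero^{-1}=\id_{\fg_\zero}$. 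Hence for $X\in\fg_\zero$ and $Y\in\fg_\uno$ one has $\tau([X,Y])=[X,\tau(Y)]$, i.e.\ $\tau|_{\fg_\uno}$ is an endomorphism of $\fg_\uno$ \emph{as a $\fg_\zero$--module}. Everything reduces to determining this endomorphism and then absorbing it into an inner conjugation; note that $\sigma'=\tau\circ\sigma$, so once $\tau|_{\fg_\uno}$ is known, $\sigma'$ is known.

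For part (b), when $\fg$ is of type 2 the $\fg_\zero$--module $\fg_\uno$ is irreducible, so Schur's Lemma forces $\tau|_{\fg_\uno}=\lambda\,\id_{\fg_\uno}$ with $\lambda\in\C^\times$. Applying $\tau$ to a bracket $[Y_1,Y_2]\in\fg_\zero$ (with $Y_1,Y_2\in\fg_\uno$) and using $\tau|_{\fg_\zero}=\id$ gives $[Y_1,Y_2]=\lambda^2\,[Y_1,Y_2]$, so $\lambda^2=1$ and $\lambda=\pm1$ (the odd--odd bracket being nonzero by simplicity). Thus $\tau=\id_{\fg_\zero}\oplus(\pm\id_{\fg_\uno})$ and $\sigma'=\tau\circ\sigma=\sigma_\zero\pm\sigma_\uno$ \emph{on the nose}, which is (b).

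For part (a), type 1 means $\fg$ carries a $\Z$--grading $\fg=\fg_{-1}\oplus\fg_0\oplus\fg_{+1}$ with $\fg_0=\fg_\zero$, $\fg_\uno=\fg_{+1}\oplus\fg_{-1}$, and $\fg_{\pm1}$ two \emph{non-isomorphic} irreducible $\fg_\zero$--modules (as holds for $A(m|n)$ and $C(n)$). Then $\mathrm{End}_{\fg_\zero}(\fg_\uno)=\C\,\id_{\fg_{+1}}\oplus\C\,\id_{\fg_{-1}}$, so $\tau|_{\fg_\uno}=\lambda_+\id_{\fg_{+1}}\oplus\lambda_-\id_{\fg_{-1}}$. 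The only nonzero odd bracket is $[\fg_{+1},\fg_{-1}]\subseteq\fg_0$, and applying $\tau$ to it yields $\lambda_+\lambda_-=1$. I then extract a reality constraint: since $\sigma^2=\sigma'^2$ (both are $+\id$ or $-\id$ on $\fg_\uno$ according to $s$), expanding $(\tau\sigma)^2=\sigma^2$ and cancelling $\sigma^2$ gives $\sigma\,\tau\,\sigma^{-1}=\tau^{-1}$; evaluating this on $\fg_{\pm1}$, using that $\sigma$ is antilinear (so the scalars $\lambda_\pm$ get conjugated) and that $\sigma$ interchanges $\fg_{+1}$ and $\fg_{-1}$ (this being dictated by $\sigma_\zero$, hence shared by $\sigma'$), forces $\overline{\lambda_\pm}=\lambda_\pm$, i.e.\ $\lambda_\pm\in\R^\times$ with a common sign because $\lambda_+\lambda_-=1$.

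The final step realizes $\tau$ by inner conjugation. Let $z\in\fg_\zero$ be the grading element ($\ad z=j$ on $\fg_j$) and put $\phi_c:=\exp(\ad(c\,z))$, an inner automorphism in the sense of Lemma \ref{parker0}. A short computation — tracking the conjugation of scalars through the antilinear $\sigma$ and using $\sigma(z)=-z$ (since $\sigma$ swaps $\fg_{\pm1}$) — shows $\phi_c\circ\sigma\circ\phi_c^{-1}$ multiplies $\sigma|_{\fg_{\pm1}}$ by $e^{\mp 2\,\mathrm{Re}(c)}$, a positive real scalar sweeping out all of $\R_{>0}$. Hence if $\lambda_\pm>0$ one chooses $c$ with $\phi_c\sigma\phi_c^{-1}=\tau\sigma=\sigma'$, giving $\sigma'\simeq\sigma=\sigma_\zero+\sigma_\uno$; if $\lambda_\pm<0$, factor $\tau=\eta\circ\tau'$ with $\tau':=\id_{\fg_\zero}\oplus\big(|\lambda_+|\,\id_{\fg_{+1}}\oplus|\lambda_-|\,\id_{\fg_{-1}}\big)$ positive and $\eta:=\id_{\fg_\zero}\oplus(-\id_{\fg_\uno})$ the parity automorphism, absorb $\tau'$ as before, and note that $\eta$ commutes with $\phi_c$ (as $z\in\fg_\zero$), so $\sigma'\simeq\eta\circ\sigma=\sigma_\zero-\sigma_\uno$. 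In both cases $\sigma'\simeq\sigma_\zero\pm\sigma_\uno$, which is (a). The main obstacle is this last step: one must carefully propagate complex conjugation through the antilinear $\sigma$ and verify $\sigma(z)=\pm z$, so that exactly the real, sign-controlled scalars survive and are hit by the one-parameter inner torus $\{\phi_c\}$; the non-isomorphism of the two odd pieces — special to type 1 — is precisely what makes $\mathrm{End}_{\fg_\zero}(\fg_\uno)$ a torus and yields inner-conjugacy rather than the equality of part (b).
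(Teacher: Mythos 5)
The paper itself does not reprove this statement: its proof is a two--line citation of Lemmas 2.3 and 2.4 of \cite{parker} for the standard case, together with the remark that the same arguments go through in the graded case. Your proposal is therefore a genuinely different, self--contained route, essentially a reconstruction of Parker's argument via the $\C$--linear comparison automorphism $\tau:=\sigma'\circ\sigma^{-1}$. Part {\it (b)} is correct as written: $\tau|_{\fg_\zero}=\id$, Schur's Lemma applied to the irreducible $\fg_\zero$--module $\fg_\uno$, and $[\fg_\uno,\fg_\uno]\neq 0$ do force $\tau=\id\oplus(\pm\id)$, hence equality on the nose.

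Part {\it (a)}, however, has two concrete gaps. First, you assert that $\sigma$ interchanges $\fg_{+1}$ and $\fg_{-1}$; this is ``dictated by $\sigma_\zero$'' only in the sense that $\sigma_\zero$ decides whether the grading element is fixed or negated, and \emph{both} possibilities occur: entrywise complex conjugation on $\fsl(m{+}1|n{+}1)$ is a standard real structure that \emph{preserves} each $\fg_{\pm 1}$. In that case the relation $\sigma\tau\sigma^{-1}=\tau^{-1}$ yields $|\lambda_\pm|=1$ rather than $\lambda_\pm\in\R^\times$, and the family $\phi_c=\exp(\ad(c\,z))$ rescales $\sigma|_{\fg_{\pm1}}$ by $e^{\pm 2i\,\mathrm{Im}(c)}$ rather than $e^{\mp 2\,\mathrm{Re}(c)}$; the conclusion survives, but only after you split into the two cases and redo the computation in each. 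Second, and more seriously, for $\fg$ of type $A(n|n)$ --- i.e.\ $\mathfrak{psl}(n{+}1|n{+}1)$ --- the grading element $z$ does \emph{not} lie in $\fg_\zero$, since the centre of $\fg_\zero=\fsl_{n+1}\oplus\fsl_{n+1}$ is trivial; hence $\exp(\ad(c\,z))$ is not an inner automorphism in the sense of Lemma \ref{parker0} and your one--parameter torus is unavailable. Moreover, for $A(1|1)$ the two odd summands are isomorphic as $\fg_\zero$--modules, so $\mathrm{End}_{\fg_\zero}(\fg_\uno)$ is strictly larger than $\C\,\id_{\fg_{+1}}\oplus\,\C\,\id_{\fg_{-1}}$ and the diagonal form of $\tau|_{\fg_\uno}$ is not automatic. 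These cases need a separate argument (they are also the delicate ones in \cite{parker}).
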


\begin{proof}
 For the standard case, the claim is proved in  Lemma 2.3 and Lemma 2.4 of  \cite{parker}.  The same arguments work in the graded case too.
\end{proof}

\smallskip

\begin{lemma}  \label{cf-1}
 Let  $ \, \theta, \theta' \in \aut_{2,s}(\fg) \, $  with  $ \, s \in \{2\,,4 \} \, $.  If  $ \; \theta_\zero = \theta'_\zero \; $,  then  $ \; \theta_\uno = \pm\theta'_\uno \; $.
\end{lemma}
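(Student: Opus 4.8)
The plan is to reduce the assertion to a rigidity statement about automorphisms that fix the even part, and then to read off the odd part from the $\fg_\zero$--module structure of $\fg_\uno$ together with the defining square conditions of $\aut_{2,s}$. First I would set $\, \rho := {\theta'}^{-1} \circ \theta \, $. Since $\, \theta_\zero = \theta'_\zero \,$ and both $\theta,\theta'$ are parity--preserving $\C$--linear automorphisms of $\fg$, the map $\rho$ is again a $\C$--linear automorphism of $\fg$ with $\, \rho|_{\fg_\zero} = \text{id}_{\fg_\zero} \,$; and since $\, \theta = \theta' \circ \rho \,$ gives $\, \theta_\uno = \theta'_\uno \circ \rho_\uno \,$, the claim is equivalent to proving $\, \rho_\uno = \pm\,\text{id}_{\fg_\uno} \,$. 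The key observation is that $\rho_\uno$ commutes with the $\fg_\zero$--action on $\fg_\uno$: for $\, X \in \fg_\zero \,$, $\, Y \in \fg_\uno \,$ one has $\, \rho_\uno\big([X,Y]\big) = [\rho(X),\rho(Y)] = [X,\rho_\uno(Y)] \,$, so $\, \rho_\uno \in \text{End}_{\fg_\zero}(\fg_\uno) \,$.

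Next I would split according to the reducibility of $\fg_\uno$ as a $\fg_\zero$--module. For $\fg$ of type $2$ the module $\fg_\uno$ is irreducible, so Schur's Lemma over $\C$ forces $\, \rho_\uno = c\cdot\text{id} \,$ for some scalar $\, c \in \C^\times \,$. As $\fg$ is simple and non--abelian, $\, [\fg_\uno,\fg_\uno] \neq 0 \,$, and applying $\rho$ to a bracket $\, [Y,Z] \neq 0 \,$ with $\, Y,Z \in \fg_\uno \,$ gives $\, [Y,Z] = \rho([Y,Z]) = [\rho(Y),\rho(Z)] = c^2\,[Y,Z] \,$, whence $\, c^2 = 1 \,$ and $\, \rho_\uno = \pm\,\text{id} \,$, as wanted. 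For $\fg$ of type $1$ I would invoke the $\Z$--grading $\, \fg = \fg_{-1} \oplus \fg_0 \oplus \fg_{+1} \,$ with $\, \fg_{\pm 1} \,$ irreducible and mutually non--isomorphic $\fg_\zero$--modules: any $\fg_\zero$--endomorphism then preserves each summand, so $\rho_\uno$ acts by scalars $c_\pm$, and the nondegenerate pairing $\, [\fg_{+1},\fg_{-1}] \subseteq \fg_\zero \,$ (nonzero, again by simplicity) yields $\, c_+\,c_- = 1 \,$ exactly as above.

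The last and decisive point is to upgrade $\, c_+\,c_- = 1 \,$ to $\, c_+ = c_- \in \{\pm 1\} \,$, and this is where the hypothesis $\, \theta,\theta' \in \aut_{2,s}(\fg) \,$ is essential. Writing $\, \epsilon := +1 \,$ if $\, s = 2 \,$ and $\, \epsilon := -1 \,$ if $\, s = 4 \,$, one has $\, \theta_\uno^2 = {\theta'_\uno}^2 = \epsilon\,\text{id}_{\fg_\uno} \,$. In the configuration where $\theta$ (hence $\theta'$, as they agree on $\fg_\zero$) preserves the $\Z$--grading, both $\theta_\uno$ and $\theta'_\uno$ are block--diagonal, so on $\fg_{+1}$ the identity $\, \theta_\uno = \theta'_\uno \circ \rho_\uno \,$ reads $\, \theta_\uno|_{\fg_{+1}} = c_+\,\theta'_\uno|_{\fg_{+1}} \,$; squaring and using $\, {\theta'_\uno}^2 = \epsilon \,$ gives $\, \epsilon = c_+^2\,\epsilon \,$, i.e.\ $\, c_+^2 = 1 \,$, and symmetrically $\, c_-^2 = 1 \,$. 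Together with $\, c_+\,c_- = 1 \,$ this forces $\, c_+ = c_- \in \{\pm 1\} \,$, so $\, \rho_\uno = \pm\,\text{id} \,$.

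The hard part will be precisely the remaining type~$1$ configuration in which $\theta$ \emph{interchanges} $\fg_{+1}$ and $\fg_{-1}$: there $\theta_\uno,\theta'_\uno$ are anti--block--diagonal, and a direct computation shows that the square condition only reproduces $\, c_+\,c_- = 1 \,$ and does not, on its own, separate the two scalars. I therefore expect this step to require an extra input — most naturally a normalization in which both $\theta$ and $\theta'$ are taken relative to a fixed distinguished triangular decomposition of $\fg$ (so that one reduces to the grading--preserving case treated above), after which $\, \rho_\uno = \pm\,\text{id} \,$ and hence $\, \theta_\uno = \pm\,\theta'_\uno \,$ follow; alternatively one settles for the weaker ``up to inner automorphism'' conclusion in the spirit of Lemma \ref{parker2}. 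This is the point of the argument I would treat with the most care.
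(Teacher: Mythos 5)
The paper does not actually prove this lemma: it cites Proposition 2.3 of \cite{cf3} for the standard case and declares the graded case an exercise. Your self-contained route --- passing to $\, \rho := {\theta'}^{-1} \! \circ \theta \, $, observing $\, \rho_\uno \in \mathrm{End}_{\fg_\zero}(\fg_\uno) \, $, and applying Schur's lemma to the isotypic decomposition of $\fg_\uno$ --- is therefore genuinely different, and it is correct and complete for type 2, and for type 1 whenever $\theta'_\uno$ preserves the two summands $\fg_{\pm 1}$. What your approach buys is transparency: it makes visible exactly which structural facts ($\fg_\uno$ irreducible vs.\ $\fg_{+1}\oplus\fg_{-1}$ with non-isomorphic irreducible summands, $[\fg_\uno,\fg_\uno]\neq 0$) drive the conclusion, which the bare citation hides.

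The difficulty you flag in the type-1, grading-swapping configuration is not a defect of your argument but a genuine feature of the statement: there the conclusion cannot hold literally and needs the up-to-inner-automorphism caveat. Concretely, for $\, \fg = \fsl(m|n) \, $ the central element $z$ of $\fg_\zero$ satisfies $\, [z,Y] = \pm Y \, $ on $\fg_{\pm 1}$, so $\, \rho_t := \exp(t\,\ad z) \, $ is an inner automorphism acting as $\id$ on $\fg_\zero$ and as $e^{\pm t}$ on $\fg_{\pm 1}$; taking $\, \theta' \in \aut_{2,4}(\fg) \, $ to be $\, X \mapsto -X^{st} \, $ (which interchanges $\fg_{+1}$ and $\fg_{-1}$), one checks exactly as in your computation that $\, \theta := \theta' \circ \rho_t \, $ again lies in $\aut_{2,4}(\fg)$ with $\, \theta_\zero = \theta'_\zero \, $ but $\, \theta_\uno = e^{\pm t}\,\theta'_\uno \, $ on the two summands, which is not $\, \pm\theta'_\uno \, $ for generic $t$. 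So your two proposed repairs are the right ones: either normalize both automorphisms with respect to a fixed distinguished triangular decomposition (reducing to the block-diagonal case you settled), or weaken the conclusion to inner-isomorphism as in Lemma \ref{parker2}{\it (a)} --- which is in fact the only form in which the lemma is used in \S \ref{cartan-dec}. Whichever you choose, you must state it explicitly; as written, your proof establishes the lemma only outside this one configuration.
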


\begin{proof}
 This is  Proposition 2.3 of \cite{cf3}  for the standard case; the graded case is just an exercise, where one replaces  $ \, \pm\,i \, $  therein with  $ \, \pm\,1 \, $.
\end{proof}

\smallskip

   At last, we have an important result.

\smallskip

\begin{proposition}  \label{aut-prop}
 Let  $ \, \fg $  and  $ \, \omega \in \baut_{2,4}(\fg) \, $  be defined as in  Proposition \ref{omega}.  Then there exist mutually inverse bijections
 \begin{equation}  \label{bij_aut2,4-->baut2,2}
   \begin{aligned}
      &  \aut_{2,4}(\fg) \;\;{\buildrel \Phi_{\scriptscriptstyle \wedge} \over {\lhook\joinrel\relbar\joinrel\relbar\joinrel\relbar\joinrel\relbar\joinrel\relbar\joinrel\twoheadrightarrow}}\;\; \baut_{2,2}(\fg) \,\setminus\, \{\, \theta \;\big|\; \theta{|}_{\fg_\zero} = \omega{|}_{\fg_\zero} \big\} \\
      &  \;\quad  \sigma \,\relbar\joinrel\relbar\joinrel\relbar\joinrel\relbar\joinrel\relbar\joinrel\relbar\joinrel\relbar\joinrel\relbar\joinrel\relbar\joinrel\longrightarrow\, \theta_\sigma := \omega \circ \sigma
   \end{aligned}
 \end{equation}
 and
\begin{equation}  \label{bij_aut2,4<--baut2,2}
   \begin{aligned}
      &  \;\; \aut_{2,4}(\fg) \;\;{\buildrel \Psi_{\scriptscriptstyle \wedge} \over {\twoheadleftarrow\joinrel\relbar\joinrel\relbar\joinrel\relbar\joinrel\relbar\joinrel\relbar\joinrel\rhook}}\;\; \baut_{2,2}(\fg) \,\setminus\, \{\, \theta \;\big|\; \theta{|}_{\fg_\zero} = \omega{|}_{\fg_\zero} \big\}  \\
      &  \omega^{-1} \!\circ \theta =: \sigma_{{}_{\scriptstyle \theta}} \,\longleftarrow\joinrel\relbar\joinrel\relbar\joinrel\relbar\joinrel\relbar\joinrel\relbar\joinrel\relbar\joinrel\relbar\joinrel\relbar\, \theta
   \end{aligned}
 \end{equation}
\noindent
 and also mutually inverse bijections
 \begin{equation}  \label{bij_aut2,2-->baut2,4}
   \begin{aligned}
      &  \aut_{2,2}(\fg) \;\;{\buildrel \Phi_{\scriptscriptstyle \vee} \over {\lhook\joinrel\relbar\joinrel\relbar\joinrel\relbar\joinrel\relbar\joinrel\relbar\joinrel\twoheadrightarrow}}\;\; \baut_{2,4}(\fg) \,\setminus\, \{\, \vartheta \,\big|\, \vartheta{|}_{\fg_\zero} = \omega{|}_{\fg_\zero} \,\vee\; \vartheta{|}_{\fg_\uno} = \omega{|}_{\fg_\uno} \big\}  \\
      &  \;\quad  \varsigma \,\relbar\joinrel\relbar\joinrel\relbar\joinrel\relbar\joinrel\relbar\joinrel\relbar\joinrel\relbar\joinrel\relbar\joinrel\relbar\joinrel\longrightarrow\, \vartheta_\varsigma := \omega \circ \varsigma
   \end{aligned}
\end{equation}
 and
 \begin{equation}  \label{bij_aut2,2<--baut2,4}
   \begin{aligned}
      &  \;\; \aut_{2,2}(\fg) \;\;{\buildrel \Psi_{\scriptscriptstyle \vee} \over {\twoheadleftarrow\joinrel\relbar\joinrel\relbar\joinrel\relbar\joinrel\relbar\joinrel\relbar\joinrel\rhook}}\;\; \baut_{2,4}(\fg) \,\setminus\, \{\, \vartheta \,\big|\, \vartheta{|}_{\fg_\zero} = \omega{|}_{\fg_\zero} \,\vee\; \vartheta{|}_{\fg_\uno} = \omega{|}_{\fg_\uno} \big\}  \\
      &  \omega^{-1} \!\circ \vartheta =: \varsigma_{{}_{\scriptstyle \vartheta}} \,\longleftarrow\joinrel\relbar\joinrel\relbar\joinrel\relbar\joinrel\relbar\joinrel\relbar\joinrel\relbar\joinrel\relbar\joinrel\relbar\, \vartheta
   \end{aligned}
 \end{equation}
\end{proposition}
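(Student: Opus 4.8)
The plan is to exploit that the four maps are, by construction, nothing but composition with the fixed automorphism $\omega$ (or with $\omega^{-1}$), so that the two pairs $(\Phi_{\scriptscriptstyle\wedge},\Psi_{\scriptscriptstyle\wedge})$ and $(\Phi_{\scriptscriptstyle\vee},\Psi_{\scriptscriptstyle\vee})$ are formally mutually inverse: indeed $\Psi_{\scriptscriptstyle\wedge}\big(\Phi_{\scriptscriptstyle\wedge}(\sigma)\big)=\omega^{-1}\circ\omega\circ\sigma=\sigma$ and $\Phi_{\scriptscriptstyle\wedge}\big(\Psi_{\scriptscriptstyle\wedge}(\theta)\big)=\omega\circ\omega^{-1}\circ\theta=\theta$, and likewise for the $\vee$--pair, recalling that $\omega$ is invertible with $\omega^{-1}|_{\fg_\zero}=\omega|_{\fg_\zero}$ and $\omega^{-1}|_{\fg_\uno}=-\omega|_{\fg_\uno}$. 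Hence the whole content of the statement is the \emph{well-definedness} of the four maps, i.e.\ that each composite lands in the indicated punctured target. The routine structural checks come first: since $\omega$ is $\C$--antilinear and parity-preserving, $\omega\circ\sigma$ is $\C$--antilinear whenever $\sigma$ is $\C$--linear (and dually $\omega^{-1}\circ\theta$ is $\C$--linear whenever $\theta$ is antilinear), all four composites are again automorphisms of $\fg$ as a real Lie superalgebra, and they preserve the $\Z_2$--grading.

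The heart of the matter is the behaviour of the squares on $\fg_\zero$ and on $\fg_\uno$, where the sign bookkeeping is exactly what forces the type-shift $4\leftrightarrow 2$. Using $\omega^2|_{\fg_\zero}=\id$, $\omega^2|_{\fg_\uno}=-\id$, and that $\omega$ commutes with the automorphism to which it is composed (see the last paragraph), for $\sigma\in\aut_{2,4}(\fg)$ one gets $(\omega\circ\sigma)^2=\omega^2\circ\sigma^2$ equal to $\id\cdot\id=\id$ on $\fg_\zero$ and to $(-\id)\cdot(-\id)=+\id$ on $\fg_\uno$, so $\omega\circ\sigma\in\baut_{2,2}(\fg)$; dually, for $\varsigma\in\aut_{2,2}(\fg)$ one finds $(\omega\circ\varsigma)^2=\id$ on $\fg_\zero$ and $(-\id)\cdot(+\id)=-\id$ on $\fg_\uno$, whence $\omega\circ\varsigma\in\baut_{2,4}(\fg)$. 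The two reverse maps are identical, now invoking $\omega^{-2}|_{\fg_\uno}=-\id$: for $\theta\in\baut_{2,2}(\fg)$ one has $(\omega^{-1}\circ\theta)^2=(-\id)\cdot(+\id)=-\id$ on $\fg_\uno$, placing it in $\aut_{2,4}(\fg)$, and symmetrically for $\vartheta\in\baut_{2,4}(\fg)$.

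Next I would dispose of the punctures, which encode precisely the non-triviality requirement $\vartheta|_{\fg_{\overline{z}}}\neq\id$ built into $\aut^{\,\R}_{2,s}$. For $\Psi_{\scriptscriptstyle\wedge}(\theta)=\omega^{-1}\circ\theta$ to lie in $\aut_{2,4}(\fg)$ it must be non-identity on each graded piece: on $\fg_\zero$ this fails exactly when $\theta|_{\fg_\zero}=\omega|_{\fg_\zero}$, which is why that locus is removed, while on $\fg_\uno$ it is automatic, since $\theta|_{\fg_\uno}=\omega|_{\fg_\uno}$ is impossible as the two sides square to $+\id$ and $-\id$ respectively. In the $\vee$--case both $\vartheta$ and $\omega$ lie in $\baut_{2,4}(\fg)$, so the square-signs agree on both pieces and a coincidence may occur on either $\fg_\zero$ or $\fg_\uno$; accordingly the removed locus is the union $\{\vartheta|_{\fg_\zero}=\omega|_{\fg_\zero}\}\cup\{\vartheta|_{\fg_\uno}=\omega|_{\fg_\uno}\}$, matching the statement verbatim.

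The one genuinely substantial point — and the step I expect to be the main obstacle — is the commutation $\omega\circ\sigma=\sigma\circ\omega$ underlying the clean identity $(\omega\sigma)^2=\omega^2\sigma^2$: without it the square need not restrict to $\pm\id$ on each piece. I would establish it by reducing to the even part, where $\omega_\zero$ is the antilinear compact structure attached to the distinguished generators (Proposition \ref{omega} and the subsequent Lemma on root vectors, cf.\ \cite{ik}), and then propagating to the odd part. Concretely, by Lemma \ref{parker1} I may conjugate $\sigma$ by an inner automorphism of $\fg$ extending one of $\fg_\zero$ so as to put $\sigma_\zero$ in standard position relative to $\omega_\zero$ (i.e.\ commuting with it), which on $\fg_\zero$ is the classical statement, recalled after Proposition \ref{omega} (see \cite{kn}), that composition with the compact structure interchanges holomorphic and antilinear involutions. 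The rigidity Lemmas \ref{cf-1} and \ref{parker2} then determine $\sigma_\uno$, resp.\ $\theta_\uno$, from the even part up to a single sign, and a direct inspection of the explicit action of $\omega$ on root vectors shows that $\omega$ interchanges the two sign choices as well, so that commutation holds on $\fg_\uno$ too; this closes the verification of well-definedness and hence of the proposition.
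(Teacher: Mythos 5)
Your skeleton is the same as the paper's: reduce to well-definedness of the four maps (mutual inverseness being formal), get the type shift $4\leftrightarrow 2$ from the identity $(\omega\circ\sigma)^2=\omega^2\circ\sigma^2$ together with $\omega^2{|}_{\fg_\zero}=\id$, $\omega^2{|}_{\fg_\uno}=-\id$, and handle the punctured targets exactly as you do (your observation that $\theta{|}_{\fg_\uno}=\omega{|}_{\fg_\uno}$ is impossible for standard $\theta$, so only the even-part coincidence need be excised in the $\wedge$-case, is correct and matches the statement). The paper, however, disposes of the crucial commutation of $\omega$ with every element of $\aut_{2,s}(\fg)$ and $\baut_{2,s}(\fg)$ by simply invoking the explicit description of these sets in \cite{chuah-mz}, and calls the remaining sign check trivial; you instead try to \emph{derive} the commutation from Lemmas \ref{parker0}--\ref{cf-1}, and that is where your argument breaks down.

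The gap is this: conjugating $\sigma$ by an inner automorphism $\psi$ (Lemma \ref{parker1}) so that $\sigma_\zero$ is in standard position relative to $\omega_\zero$ replaces $\sigma$ by $\sigma'=\psi\circ\sigma\circ\psi^{-1}$, and commutation of $\omega$ with $\sigma'$ does not transfer back to $\sigma$, since $\psi=\exp\big(\ad(n)\big)$ commutes with the antilinear $\omega$ only when $\omega(n)=n$, which you cannot arrange. This matters because well-definedness is a pointwise assertion about \emph{every} $\sigma$, not about a conjugacy class: writing $(\omega\sigma)^2=\omega^2\,(\omega^{-1}\sigma\,\omega)\,\sigma$ and using $\sigma^2{|}_{\fg_\zero}=\id$, $\sigma^2{|}_{\fg_\uno}=-\id$, one sees that $(\omega\sigma)^2{|}_{\fg_{\overline{z}}}$ takes the required value \emph{if and only if} $\omega^{-1}\sigma\,\omega=\sigma$ on $\fg_{\overline{z}}$, so nothing short of commutation for the given $\sigma$ will do. Your final step is also ambiguous at best: if conjugation by $\omega$ sent $\sigma_\uno$ to $-\sigma_\uno$ (``interchanged the two sign choices''), you would get $(\omega\sigma)^2{|}_{\fg_\uno}=-\id$, landing in $\baut_{2,4}(\fg)$ rather than $\baut_{2,2}(\fg)$ and contradicting the claim; what you need is precisely that the $+$ sign occurs, and this is asserted rather than argued. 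To close the proof you must either import the commutation wholesale from the classification in \cite{chuah-mz}, as the paper does, or work only with automorphisms normalized with respect to the Cartan subalgebra and the root vectors of Proposition \ref{omega}, for which the commutation can be verified directly on generators.
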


\begin{proof}
 It is enough to show that the given maps are actually well-defined   --- namely, their images do lie in the expected target set ---   since then they will clearly be pairwise mutually inverse.
                                                     \par
   It follows from definitions and from the explicit description of  $ \, \aut_{2,s}(\fg) \, $   --- see \cite{chuah-mz}  ---   that  $ \omega $  commutes with every  $ \, \eta \in \aut_{2,s}(\fg) \, $  and every  $ \, \gamma \in \baut_{2,s}(\fg) \, $.  This implies (for all  $ \, z \in \Z \, $)
  $$  \displaylines{
   \theta_\sigma^{\,z} = {(\omega \circ \sigma)}^z = \omega^z \circ \sigma^z  \quad ,   \qquad  \sigma_\theta^{\,z} = {\big( \omega^{-1} \!\circ \theta \big)}^z = \omega^{-z} \!\circ \theta^z  \cr
   \vartheta_\varsigma^{\,z} = {(\omega \circ \varsigma)}^z = \omega^z \circ \varsigma^z  \quad , \qquad  \varsigma_{{}_{\scriptstyle \vartheta}}^{\,z} = {\big( \omega^{-1} \!\circ \vartheta \big)}^z = \omega^{-z} \!\circ \vartheta^z  }  $$
Using these identities, a trivial check shows that
  $$  \displaylines{
   \Phi_{\scriptscriptstyle \wedge}\big(\,\aut_{2,4}(\fg)\big) \,\;\subseteq\;\, \baut_{2,2}(\fg) \,\setminus\, \{\, \theta \;\big|\; \theta{|}_{\fg_\zero} = \omega{|}_{\fg_\zero} \big\}
  \quad ,   \qquad  \Psi_{\scriptscriptstyle \wedge}\big(\, \baut_{2,2}(\fg) \setminus \{\, \theta \;\big|\; \theta{|}_{\fg_\zero} \big\} \,\big) \,\;\subseteq\;\, \aut_{2,4}(\fg)  \cr
   \Phi_{\scriptscriptstyle \vee}\big(\,\aut_{2,2}(\fg)\big) \,\;\subseteq\;\, \baut_{2,4}(\fg)  \,\setminus\, \{\, \vartheta \,\big|\, \vartheta{|}_{\fg_\zero} = \omega{|}_{\fg_\zero} \,\vee\; \vartheta{|}_{\fg_\uno} = \omega{|}_{\fg_\uno} \big\}  \cr
   \Psi_{\scriptscriptstyle \vee}\big(\,\baut_{2,4}(\fg)  \,\setminus\, \{\, \vartheta \,\big|\, \vartheta{|}_{\fg_\zero} = \omega{|}_{\fg_\zero} \,\vee\; \vartheta{|}_{\fg_\uno} = \omega{|}_{\fg_\uno} \big\} \,\big) \,\;\subseteq\;\, \aut_{2,2}(\fg)  } $$
 which means that the maps  $ \Phi_{\scriptscriptstyle \wedge} \, $,  $ \Psi_{\scriptscriptstyle \wedge} \, $,  $ \Phi_{\scriptscriptstyle \vee} $  and  $ \Psi_{\scriptscriptstyle \vee} $  are well defined, q.e.d.
\end{proof}

\medskip

\subsection{Compact forms of basic Lie superalgebras}

 We want to define the notion of compact real form of a (complex) Lie superalgebra.  Our notion differs from  \cite{chuah-mz, cf, cfv},  where compact superalgebras are assumed to be even.

\smallskip

\begin{definition}  \label{cpt-def}
 Let  $ \fg $  be a complex Lie superalgebra with a real structure  $ \phi $  on it, and let  $ \cL_\fg^\varphi $   --- see  Definition \ref{realf-def}  ---   be the associated real form (in functorial sense).
 \vskip3pt
    {\it (a)}\;  We say that  $ \cL_\fg^\varphi $  is  {\sl super-compact\/}  if there exists a suitable superspace  $ V $  with a positive definite, consistent Hermitian form  $ \cB $  such that  $ \; \cL_\fg^\varphi \subseteq \fu_\cB(V) \; $   --- cf.\  Definition \ref{unit-def};
 \vskip3pt
    {\it (b)}\;  We say that  $ \cL_\fg^\varphi $  is  {\sl compact\/}  if its even part  $ {\big( \cL_\fg^\varphi \big)}_\zero $  is compact in the classical sense, that is   --- as  $ {\big( \cL_\fg^\varphi \big)}_\zero $  is (always) represented by  $ \, {(\fg_\zero)}^\phi = {\big( \fg^\phi \big)}_\zero \, $  ---   if the real Lie algebra  $ \, {(\fg_\zero)}^\phi = {\big( \fg^\phi \big)}_\zero \, $  is compact in the classical sense.
 \vskip3pt
    {\it (c)}\;  We say that a (graded or standard) real structure  $ \phi $  on  $ \fg $  is  {\sl super-compact},  resp.\ is  {\sl compact},  if the associated real form  $ {\big( \cL_\fg^\varphi \big)}_\zero $  of  $ \cL_\fg^\varphi $  is super-compact, resp.\ is compact.
 \vskip3pt
    {\sl N.B.:}\,  it follows at once from  Observation \ref{obs: even-part_unitary}  that super-compactness implies compactness.
\end{definition}

\smallskip

   The following existence result proves the key importance of the notion of  {\sl graded\/}  real structures. Theorem B in Sec.\ \ref{intro} consists of the statements of  Theorem \ref{cpt-form-gr}  and  Theorem \ref{cpt-form-st}.

\smallskip

\begin{theorem}  \label{cpt-form-gr}
 Let  $ \fg $  be a simple complex Lie superalgebra of basic type, with the additional assumption that  $ \, a \in \mathbb{R} \, $  if  $ \, \fg $  is of type  $ D(2,1;a) \, $.  Then  $ \fg $  has a  {\sl graded},  {\sl super-compact\/}  real structure  $ \omega $  (hence a  {\sl graded},  {\sl super-compact\/}  real form  $ \cL_\fg^\omega \, $)  which is unique up to inner automorphisms.
\end{theorem}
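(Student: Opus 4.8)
The plan is to take the distinguished $\omega \in \baut_{2,4}(\fg)$ furnished by Proposition \ref{omega} --- which is \emph{graded} by its very construction, since $\omega^2|_{\fg_\uno} = -\id_{\fg_\uno}$ --- and to prove two things: (i) that $\omega$ is super-compact, and (ii) that every graded super-compact real structure on $\fg$ is inner-isomorphic to $\omega$. For existence I would realize $\fg$ inside a unitary Lie superalgebra via its own adjoint representation. Being basic simple, $\fg$ carries a non-degenerate invariant supersymmetric bilinear form $\langle\,\ ,\ \rangle$, and the root-vector description of $\omega$ (the Lemma after \cite{ik} above) yields $\langle v,w\rangle = \overline{\langle\omega(v),\omega(w)\rangle}$; hence Proposition \ref{herm-bil} applied with $\phi=\omega$ (so $\nu_\phi=\uno$) produces the consistent super Hermitian form $B_\omega(x,y)=i^{|x||y|}\langle x,\omega(y)\rangle$ on $V:=\fg$, which Lemma \ref{B_V ---> cB_L} promotes to a functorial Hermitian form $\cB$ on $\cL_\fg$. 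Invariance of $\langle\,\ ,\ \rangle$ under $\ad(\fg)$ then translates, exactly as in the classical compact case, into the statement that $\ad$ sends the real form $\cL_\fg^{\,\omega}$ into the $\cB$-skew-adjoint operators; by Proposition \ref{adjoint-prop} and Definition \ref{unit-def} this is precisely $\cL_\fg^{\,\omega}\subseteq\fu_\cB(\fg)$, so once $\cB$ is positive definite we conclude that $\omega$ is super-compact.

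The main obstacle, and the heart of the matter, is the positive-definiteness of $B_\omega$. On the even part one has $\omega_\zero=\omega|_{\fg_\zero}$, the classical Cartan antiinvolution attached to the compact form of the reductive Lie algebra $\fg_\zero$, for which $B_\omega|_{\fg_\zero}$ is positive definite by the standard theory of compact real forms (cf.\ \cite{kn}). The genuinely new content is the odd part: here I would use the normalization $\omega(x_\alpha)=+x_{-\alpha}$ for odd simple $\alpha$ (and its propagation to all odd roots) together with the sign of $\langle x_\alpha,x_{-\alpha}\rangle$ dictated by the basic structure theory to verify $B_\omega(x_\alpha,x_\alpha)>0$ at each odd root vector; since $B_\omega|_{\fg_\uno}$ is $\fg_\zero^{\,\omega}$-invariant and the odd root spaces are permuted under the compact even form, positivity at the root vectors propagates to all of $\fg_\uno$. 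This is exactly the step that forces the \emph{graded} rather than the standard choice, and it may be carried out type-by-type along the list \eqref{list} (or uniformly via the contravariant form attached to $\langle\,\ ,\ \rangle$).

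For uniqueness, let $\omega'$ be any graded super-compact real structure on $\fg$. Super-compactness implies compactness (the N.B.\ after Definition \ref{cpt-def}), so $\omega'_\zero$ is a compact real structure on $\fg_\zero$; by the classical uniqueness of compact real forms there is an inner automorphism $\phi_\zero$ of $\fg_\zero$ with $\phi_\zero\circ\omega'_\zero\circ\phi_\zero^{-1}=\omega_\zero$, which by Lemma \ref{parker1} extends to an inner $\phi$ of $\fg$. Replacing $\omega'$ by $\phi\circ\omega'\circ\phi^{-1}$ we may assume $\omega'_\zero=\omega_\zero$, and then Lemma \ref{parker2} gives $\omega'_\uno=\pm\,\omega_\uno$ --- up to a further inner automorphism when $\fg$ is of type 1.

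It remains to pin down the sign, and this is where super-compactness re-enters decisively. From $\cL_\fg^{\,\omega'}\subseteq\fu_{\cB'}(V')$ with $\cB'$ positive definite, the (positive definite, $\ad$-invariant) Hermitian form carried by $\fu_{\cB'}(V')$ restricts to a positive definite invariant Hermitian form on $\fg^{\omega'}$, which by simplicity of $\fg$ is proportional to $B_{\omega'}(x,y)=i^{|x||y|}\langle x,\omega'(y)\rangle$; hence $B_{\omega'}$ is positive definite. But on homogeneous odd $x,y$ one computes $B_{\omega_\zero-\omega_\uno}(x,y)=i\,\langle x,-\omega_\uno(y)\rangle=-\,B_\omega(x,y)$, so the choice $\omega'_\uno=-\omega_\uno$ would make $B_{\omega'}|_{\fg_\uno}$ negative definite --- contradicting positivity (and, via Observation \ref{obs: even-part_unitary}, incompatible with a positive definite odd block). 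Therefore $\omega'_\uno=+\omega_\uno$, whence $\omega'=\phi^{-1}\circ(\omega_\zero+\omega_\uno)\circ\phi\simeq\omega$, which is the asserted uniqueness up to inner automorphisms.
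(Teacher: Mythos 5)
Your proposal is correct and follows essentially the same route as the paper: existence via the non-degenerate invariant form and Proposition \ref{herm-bil} applied to the $\omega$ of Proposition \ref{omega}, with $\ad$-invariance yielding $\cL_\fg^{\,\omega}\subseteq\fu_\cB(\fg)$, and uniqueness via the classical even-part argument plus Lemma \ref{parker1}. You are in fact more careful than the paper on two points it leaves implicit --- the positive-definiteness of $B_\omega$ (merely asserted there, for the Cartan--Killing form $\kappa$) and the resolution of the sign ambiguity $\omega'_\uno=\pm\,\omega_\uno$ arising from Lemma \ref{parker2}.
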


\begin{proof}
 Let  $ \, \omega \in \baut_{2,4}(\fg) \, $  be given as in  Proposition \ref{omega},  and let $ \kappa $  be the Cartan-Killing form of  $ \fg $  (see  \cite{musson}, Sec.\ 5.4, Ch. 5):  by Theorem 5.4.1 in  \cite{musson},  this  $ \kappa $  is non-degenerate.  Using this and other properties of  $ \omega $  and  $ \kappa \, $,  one easily checks that
  $$  \kappa(x,y)  \; = \;  \overline{\kappa\big(\omega(x),\omega(y)\big)}   \quad \qquad  \forall \;\; x , y \in \fg  $$
 Thus the condition of  Proposition \ref{herm-bil}  is satisfied, and therefore
  $$  B(x,y)  \; := \;  {(-i\,)}^{|x| |y|} \, \kappa\big( x \,, \omega(y) \big)  $$
is a consistent super Hermitian form, which in addition is also
positive definite.
                                                       \par
   Now consider the associated Hermitian form (following  Lemma \ref{B_V ---> cB_L})
  $$  \cB\big( a\,x , b\,y \big)  \; = \;  i^{|x| |y|} \, a \, \widetilde{b} \, B(x,y)  $$
for all homogeneous  $ \, a \in A_{\overline{z}} \, $,  $ \, x \in V_{\overline{z}} \, $,  $ \, b \in A_{\overline{s}} \, $,  $ \, y \in V_{\overline{s}} \, $  and all  $ \, A \in \salg^\gr_\C \, $;  more directly, according to  \eqref{Bfopts-def}  we can also write
\beq  \label{Bkappa}
  \cB(X\,,Y)  \; = \;  \kappa_A\big(\, X \,,\, \omega_A(Y) \big)   \quad \qquad  \forall \;\; X , Y \in \cL_\fg(A)
\eeq
   \indent   We want to show that the functor  $ \cL_\fg^\omega $  embeds into  $ \fu_\cB(\fg) \, $:  this is equivalent to showing that
\beq  \label{U_is_B-unitary}
  \cB\big( U \!\cdot\! X \, , \, Y \big) \, + \, \cB\big( X \, , \, U \!\cdot\! Y \big)  \; = \;  0
\eeq
for all  $ \, X , Y \in \cL_\fg(A) \, $  and  $ \, U \in \cL_\fg^\omega(A) \, $,  where  $ \; U \!\cdot\! X := [U,X] \; $.  Note that in the present case the super vector space  $ V $  of  Definition \ref{unit-def}  is just  $ \fg $  itself.  Now, thanks to  \eqref{Bkappa}  we have
%
%
  $$  \cB( U \!\cdot\! X , Y)  =  \kappa_A\big( [U,X] , \omega_A(Y) \big)  =  -\kappa_A\big( X , \big[U,\omega_A(Y)\big] \big)  =  -\kappa_A\big( X , \omega_A[U,Y] \big)  =  -\cB( X , U \!\cdot\! Y )  $$
%
%
 since  $ \kappa $  is  {\sl ad\/}--invariant  and  $ \; \omega_A(U) = U \; $  by assumption; thus  \eqref{U_is_B-unitary}  is proved.
 \vskip7pt
   We now come to uniqueness.  By the ordinary theory, a real structure  $ \, \phi_\zero \, $  on  $ \fg_\zero $  giving a compact real form of the latter is unique up to inner automorphism, i.e.\ we can write any other real structure  $ \, \phi'_\zero \, $  on  $ \fg_\zero $  yielding another compact form as  $ \; \phi'_\zero = \psi_\zero \circ \phi_\zero \circ \psi_\zero^{-1} \; $  for some inner automorphism  $ \psi_\zero \, $.  Thanks to this, if  $ \, \phi' \, $  is any real structure on  $ \fg $  giving a compact form  $ \fk' \, $,  then  Lemma \ref{parker1}  applies and we conclude our proof.
\end{proof}

\smallskip

   We now turn our attention to the standard case.

\smallskip

\begin{theorem}  \label{cpt-form-st}
 Let  $ \fg $  be a simple complex Lie superalgebra of basic type.  Then:
 \vskip3pt
   {\it (a)}\,  if  $ \fg $  is of type 1 (i.e., of type  $ A $  or  $ C \, $),  then it admits a  {\sl standard, compact}  real structure, which is unique up to inner automorphisms;
%
%
 \vskip2pt
   {\it (b)}\,  if  $ \fg $  is of type 2 (i.e., of type  $ B \, $,  $ D \, $,  $ F $  or  $ G \, $),  then it has no  {\sl standard, compact}  real structure.
\end{theorem}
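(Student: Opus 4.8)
The plan is to treat the existence statement (a) in type 1 and the non-existence statement (b) in type 2 separately, in both cases comparing a hypothetical standard compact real structure with the graded structure $\omega \in \baut_{2,4}(\fg)$ of Proposition \ref{omega}, whose even part $\omega_\zero$ already yields the compact form of $\fg_\zero$ by Theorem \ref{cpt-form-gr}. The whole dichotomy will rest on the fact that for type 1 the odd part decomposes as $\fg_\uno = \fg_{-1} \oplus \fg_{+1}$ under a consistent $\Z$--grading, whereas for type 2 the odd part $\fg_\uno$ is irreducible as a $\fg_\zero$--module.

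For part (a) I would exploit this consistent $\Z$--grading $\fg = \fg_{-1} \oplus \fg_0 \oplus \fg_{+1}$ of type 1, so that $\fg_\zero = \fg_0$ and $\fg_\uno = \fg_{-1} \oplus \fg_{+1}$, with $\omega_\uno$ interchanging $\fg_{+1}$ and $\fg_{-1}$ (it sends root vectors $x_\alpha$ to $\pm x_{-\alpha}$). Let $J$ be the $\C$--linear automorphism of $\fg$ defined by $J|_{\fg_j} = i^{\,j}\,\id_{\fg_j}$; it preserves the bracket because $[\fg_j,\fg_k] \subseteq \fg_{j+k}$ and $i^{\,j} i^{\,k} = i^{\,j+k}$. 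I then set $\phi := J \circ \omega$. A direct check shows $\phi$ is $\C$--antilinear, is a Lie superalgebra automorphism, restricts to $\omega_\zero$ on $\fg_\zero$, and satisfies $\phi^2|_{\fg_\uno} = +\id$: for $X \in \fg_{+1}$ one finds $\phi(X) = -i\,\omega(X) \in \fg_{-1}$ and $\phi^2(X) = J\,\omega\big(-i\,\omega(X)\big) = J\big(i\,\omega^2(X)\big) = J(-iX) = X$, using $\omega^2|_{\fg_\uno} = -\id$. Hence $\phi \in \baut_{2,2}(\fg)$ is a standard, compact real structure. Uniqueness up to inner automorphisms follows as in Theorem \ref{cpt-form-gr}: classical uniqueness of the compact form of $\fg_\zero$ together with Lemma \ref{parker1} lets me assume $\phi'_\zero = \phi_\zero$ for any competing standard compact $\phi'$, and Lemma \ref{parker2}(a) then gives $\phi' \simeq \phi_\zero \pm \phi_\uno$, the two signs being inner-isomorphic in the type 1 case.

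For part (b) I would argue by contradiction. Suppose $\phi \in \baut_{2,2}(\fg)$ is a standard compact real structure. Since $\phi$ is compact, $\fg_\zero^\phi$ is a compact real form of $\fg_\zero$, so $\phi_\zero$ is inner-conjugate to $\omega_\zero$; by Lemma \ref{parker1} I may assume outright that $\phi_\zero = \omega_\zero$. Now $\fg_\uno$ is $\fg_\zero$--irreducible, and both $\phi_\uno$ and $\omega_\uno$ are $\C$--antilinear maps $\fg_\uno \to \fg_\uno$ extending the same $\phi_\zero = \omega_\zero$ to a Lie automorphism. Hence $T := \omega_\uno^{-1} \circ \phi_\uno$ is a $\C$--linear map commuting with the $\fg_\zero$--action (one checks $T([h,X]) = [h,T(X)]$ using that $\omega_\uno^{-1}([\omega_\zero h, Z]) = [h,\omega_\uno^{-1}(Z)]$), so by Schur's lemma $T = \lambda\,\id$ for some $\lambda \in \C^\times$, whence $\phi_\uno = \lambda\,\omega_\uno$. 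Because $\omega_\uno$ is antilinear, $\phi_\uno^2 = \lambda\bar\lambda\,\omega_\uno^2 = -|\lambda|^2\,\id$, a strictly negative multiple of the identity, which can never equal $+\id$. This contradicts $\phi^2|_{\fg_\uno} = +\id$, so no standard compact real structure exists.

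The main obstacle is the careful bookkeeping of antilinearity: I must verify that the composite $\omega_\uno^{-1} \circ \phi_\uno$ of two antilinear maps is a genuine $\C$--linear $\fg_\zero$--module morphism so that Schur applies, and that the scalar enters the square as $|\lambda|^2$ rather than $\lambda^2$. It is precisely this positivity, combined with the sign $-\id$ forced on $\omega_\uno^2$ by $\omega \in \baut_{2,4}(\fg)$, that obstructs a standard (square $+\id$) structure in type 2, while the consistent $\Z$--grading of type 1 supplies the linear automorphism $J$ that flips this sign. The reducible-versus-irreducible behaviour of $\fg_\uno$ is exactly what separates the two regimes, and it is already the mechanism behind Lemmas \ref{parker2} and \ref{cf-1}.
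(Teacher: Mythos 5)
Your proof is correct, but it takes a genuinely different route from the paper's in both halves, so a comparison is worth recording. For part \emph{(a)} the paper argues case by case: for type $A$ it observes that the standard unitary structures on $\fgl(m{+}1|n{+}1)$ from \S\,\ref{sbsbsec: Stand_case} restrict to $\fsl$ and $\mathfrak{psl}$, while for type $C$ it builds $\sigma=\omega\circ\theta$ with $\theta\in\aut_{2,4}(C(n))$ equal to the identity on the even part and multiplication by $i$ on the odd simple root vector, then checks everything on explicit $\osp\big(2|2(n{-}1)\big)$ matrices. Your $\phi=J\circ\omega$ with $J|_{\fg_j}=i^{\,j}\id$ is essentially that type-$C$ device (your $J$ is exactly such a $\theta$, extended multiplicatively from the odd simple root), applied uniformly to both type-1 families; this is cleaner and avoids the matrix computation, at the mild cost of needing that $\omega$ swaps $\fg_{+1}$ and $\fg_{-1}$ (which follows from $\omega(\fg_\alpha)=\fg_{-\alpha}$), and your uniqueness argument via Lemmas \ref{parker1} and \ref{parker2}(a) matches what the paper leaves implicit by saying ``as in Theorem \ref{cpt-form-gr}''. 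For part \emph{(b)} the divergence is larger: the paper defers to Chuah's admissible-marking criterion, arguing that the associated $\theta\in\aut_{2,4}(\fg)$ forces the even lowest root to be non-compact, so $\fg_\zero$ cannot carry the compact form; you instead normalize $\phi_\zero=\omega_\zero$ and apply Schur's lemma to the $\C$-linear $\fg_\zero$-endomorphism $\omega_\uno^{-1}\circ\phi_\uno$ of the irreducible module $\fg_\uno$, obtaining $\phi_\uno^{\,2}=-|\lambda|^2\,\id\neq+\id$. This is self-contained, more elementary, and slightly sharper (it shows that \emph{no} standard structure at all can lie over $\omega_\zero$ in type 2), whereas the paper's version leans on an external classification; the inputs you must supply are the irreducibility of $\fg_\uno$ (the defining feature of type 2) and the classical conjugacy of compact forms of the semisimple even part. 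Two minor points: $T=\lambda\,\id$ actually yields $\phi_\uno=\bar\lambda\,\omega_\uno$ rather than $\lambda\,\omega_\uno$, which is harmless since only $|\lambda|^2$ enters; and in part \emph{(a)} the compactness of $\phi$ deserves one explicit line, namely $\phi|_{\fg_\zero}=\omega_\zero$ because $J|_{\fg_0}=\id$.
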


\begin{proof}
 {\it (a)}\,  If  $ \fg $  is of type  $ A \, $,  then  $ \, \fg = \fsl(m\!+\!1|\,n\!+\!1) \, $  or  $ \, \fg = \mathfrak{psl}(m\!+\!1|\,m\!+\!1) \, $.  In both cases, one easily sees that the standard structures in  $ \fgl(m\!+\!1|\,n\!+\!1) $  described in  \S \ref{sbsbsec: Stand_case}  induce similar structures on  $ \fg \, $,  and we are done.  Finally, uniqueness follows as in the proof of  Theorem \ref{cpt-form-gr}.
                                                             \par
   If  $ \fg $  is of type  $ C $  instead, we find an explicit  $ \, \sigma \in \baut_{2,2}(C(n)) \, $  making explicit use of  Proposition \ref{aut-prop},  namely in the form  $ \; \sigma := \omega \circ \theta \, $;  \,here  $ \omega $  is as in  Proposition \ref{omega},  while  $ \, \theta \in \aut_{2,4}\big(C(n)\big) \, $  is chosen to be the identity on  $ {C(n)}_\zero $  and such that  $ \, \theta(X_\beta) := i\,X_\beta \, $  for  $ \beta $  the odd simple root
in a positive system with preferred simple system (i.e., a simple system with one odd root, now denoted  $ \beta $).  Once we describe  $ \fg $  of type  $ C(n) $  as the Lie superalgebra  $ \mathfrak{osp}\big(2\big|2(n\!-\!1)\big) \, $   --- see  \cite{kac}, p.\ 31 ---   a straightforward analysis yields the following explicit description of  $ \sigma $
\begin{equation}  \label{eq: sigma_x_C(n)}
  \sigma\begin{pmatrix}
    \; b  &  0  &  x  &  y \;\;  \\
    \; 0  & -b  &  z  &  w \;\;  \\
   \; w^t & y^t &  A  &  B \;\;  \\
  \; -z^t & -x^t & C  &  D \;\;
        \end{pmatrix}
\; = \;
    \begin{pmatrix}
      \; -\bbar  &     0     & -i\,\wbar  &  i\,\zbar \;\;  \\
      \;    0    &   \bbar   &  i\,\ybar  & -i\,\xbar \;\;  \\
  \; -i\,\xbar^t & i\,\zbar^t & -\Abar^t  &   -\Cbar \;\;   \\
  \; -i\,\ybar^t & i\,\wbar^t & -\Bbar^t  &    \Abar \;\;
    \end{pmatrix}
\end{equation}
 --- where the above are block matrices with blocks of convenient sizes ---   from which one can directly check that actually  $ \, \sigma \in \baut_{2,2}(C(n)) \, $,  \,as required.
                                                            \par
   As to uniqueness, it follows again as in the proof of  Theorem \ref{cpt-form-gr}.
 \vskip5pt
   {\it (b)}\,  In this case, the statement is discussed in detail in  \cite{chuah-jalg},  where the condition of admissible marking   --- see (1.4) in  \cite{chuah-jalg}  ---   prescribes one even root to be non compact. For the reader convenience we briefly recap here the argument. According to  Theorem \ref{aut-prop}  a real form corresponds to an automorphism  $ \, \theta \in \aut_{2,4}(\fg) \, $,  assigning the eigenvalue  $ \, i \, $  to  $ \, x_\beta \in \fg_\beta \, $,  with  $ \beta $  a simple odd root in the simple system as in  Proposition \ref{omega}.  Since  the lowest root  $ \; \varphi = 2\,\be + \dots \; $  is even, the eigenvalue of  $ \varphi $  is  $ \, -1 \, $, hence  $ \varphi $  is non compact.  Hence  $ \fg_0 $  is non compact, consequently we cannot have a standard compact real form for  $ \fg \, $ (see also  \cite{chuah-jalg} Secc.\ 1, 2).
\end{proof}

\smallskip

\begin{remark}
 In particular for  $ \, \fg = \mathfrak{osp}\big(2\,\big|\,2(n-1)\big) \, $   --- i.e., of type $ C(n) $  ---   one easily sees that the real form defined by the standard real structure  $ \sigma $  in  \eqref{eq: sigma_x_C(n)}   --- i.e., the real Lie subsuperalgebra of fixed points of  $ \sigma $  in  $ \fg $  ---   is given by
  $$  {\mathfrak{osp}\big(2\,\big|\,2(n\!-\!1)\big)}^\sigma  \; = \;  \left\{
   \begin{pmatrix}
       \, i\,b  &    0   &      x    &     y     \,  \\
       \,   0   &  -i\,b &  i\,\ybar & -i\,\xbar \,  \\
 \, -i\,\xbar^t &   y^t  &      A    &     B     \,  \\
 \, -i\,\ybar^t  & -x^t  &   -\Bbar  &   \Abar \,
   \end{pmatrix}  \;\; : \;\;  b \in \R \; ,  \;\;  A = -\Abar^t \;\; ,  \;\;  B = B^t \;\right\} $$
\end{remark}

\medskip

\subsection{Cartan involutions and decomposition}  \label{cartan-dec}

 If  $ \fg_\zero $  is a complex semisimple Lie algebra, we have a one to one correspondence between non compact real forms of  $ \fg_\zero $  and involutions  $ \theta_\zero $  of  $ \fg_\zero \, $. Now  $ \theta_\zero $  restricts to a Cartan involution on the corresponding real form, unique up
to inner automorphism.  We wish to extend this picture to the graded setting.

\medskip

   Let  $ \fg $  be a contragredient basic Lie superalgebra,  $ \fh $  a Cartan subalgebra and let  $ \, \theta \in \aut_{2,2}(\fg) \, $  be  {\it equal rank},  that is  $ \, \theta{\big|}_\fh = \id_\fh \, $.  As in  Proposition \ref{aut-prop},  we have that  $ \, \sigma = \omega \circ \theta \in \baut_{2,4}(\fg) \, $  gives a graded real structure on  $ \fg \, $.  Let  $ \, \fk = \fg^\theta \, $. Since  $ \theta $  commutes with  $ \omega \, $,  we have that  $ \theta $  preserves this structure, hence  $ \, \big(\, \fk \, , \sigma{\big|}_\fk \,\big) \, $  is a well defined graded real structure on $ \fk \, $.

\smallskip

\begin{proposition}
 Let the notation be as above. Then  $ \, \big(\, \fk \, , \sigma{\big|}_\fk \,\big) \, $  is super-compact.
\end{proposition}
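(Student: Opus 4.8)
The plan is to reduce the statement to a direct application of Theorem \ref{cpt-form-gr}, by observing that on the subalgebra $\fk = \fg^\theta$ the real structure $\sigma$ coincides with the graded super-compact real structure $\omega$ of the ambient $\fg$. Once this is established, super-compactness of $(\fk,\sigma|_\fk)$ is simply inherited from $\fg$ through the embedding already constructed in Theorem \ref{cpt-form-gr}.

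First I would record that $\omega$ (equivalently $\sigma$) stabilizes $\fk$ and that $\sigma|_\fk = \omega|_\fk$. Indeed, as noted in the proof of Proposition \ref{aut-prop}, $\omega$ commutes with every element of $\aut_{2,4}(\fg)$, in particular with $\theta$; hence for $x \in \fk = \fg^\theta$ one has $\theta\big(\omega(x)\big) = \omega\big(\theta(x)\big) = \omega(x)$, so $\omega(x) \in \fk$, and likewise $\sigma(x) = \omega\big(\theta(x)\big) = \omega(x)$. Thus $\omega|_\fk$ is the graded real structure on $\fk$ induced by $(\fg,\omega)$ in the sense of Observation \ref{subspaces}, it lies in $\baut_{2,4}(\fk)$ (being antilinear, parity-preserving, and squaring to $\pm\,\text{id}$ on $\fk_\zero$, $\fk_\uno$ as $\omega$ does on $\fg$), and $\sigma|_\fk = \omega|_\fk$.

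Next I would invoke Theorem \ref{cpt-form-gr} for the ambient superalgebra: with $V = \fg$ and the consistent, positive-definite super Hermitian form $B(x,y) = (-i)^{|x||y|}\,\kappa\big(x,\omega(y)\big)$ --- whose functorial counterpart $\cB$ satisfies $\cB(X,Y) = \kappa_A\big(X,\omega_A(Y)\big)$ --- one has the embedding $\cL_\fg^\omega \subseteq \fu_\cB(\fg)$. Since $\fk$ is a $\Z_2$--graded Lie sub-superalgebra of $\fg$ which is stable under $\omega$, Observation \ref{subspaces}(a) yields $\cL_\fk^{\,\omega|_\fk}(A) \subseteq \cL_\fg^\omega(A)$ for every $A \in \salg_\C^\gr$. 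Combining this with the previous step produces the chain $\cL_\fk^{\,\sigma|_\fk} = \cL_\fk^{\,\omega|_\fk} \subseteq \cL_\fg^\omega \subseteq \fu_\cB(\fg)$; as $\cB$ is consistent and positive definite, Definition \ref{cpt-def}(a) (taking the superspace $V = \fg$) shows that $(\fk,\sigma|_\fk)$ is super-compact.

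I do not expect a serious obstacle: the whole content is the identity $\sigma|_\fk = \omega|_\fk$, which holds precisely because $\theta$ is the identity on its own fixed-point set $\fk$. The one point to handle with care is that one should keep the ambient space $V = \fg$, with $\fk$ acting on $\fg$ by the restricted adjoint action $U\cdot X = [U,X]$ and embedded via $\ad : \fk \hookrightarrow \fgl(\fg)$, rather than attempting to build an intrinsic positive-definite super Hermitian form on $\fk$ alone: the Cartan--Killing form of $\fk$ need not be non-degenerate, so reusing the positive-definite $\cB$ coming from $\fg$ is the clean route.
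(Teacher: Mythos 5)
Your proposal is correct and follows the same route as the paper, whose entire proof is the one-line remark that ``by the arguments of Theorem \ref{cpt-form-gr}, we immediately see that $\cL^\sigma_\fk \subseteq \fu(\kappa)$''; you have simply made explicit the point that makes those arguments apply, namely that $\sigma{\big|}_\fk = \omega{\big|}_\fk$ because $\theta$ is the identity on its own fixed-point set, so the Killing-form Hermitian structure and the embedding $\cL_\fg^{\,\omega} \subseteq \fu_\cB(\fg)$ from Theorem \ref{cpt-form-gr} restrict to $\fk$ via Observation \ref{subspaces}. Your closing caveat about keeping the ambient space $V=\fg$ with $\fk$ acting by the restricted adjoint action is exactly the right reading of the paper's $\fu(\kappa)$.
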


\begin{proof}
 By the arguments of  Theorem \ref{cpt-form-gr},  we immediately see that  $ \, \cL^\sigma_\fk \subseteq \fu(\kappa) \, $.
\end{proof}

\smallskip

   Let  $ \fp $  be the eigenspace of  $ \theta $  of eigenvalue  $ -1 \, $.  Then we immediately have the decomposition:
\beq  \label{st-cartan-dec}
  \fg  \; = \;  \fk \oplus \fp  \;\; ,  \qquad  [\fk,\fk] \subseteq \fk \; ,  \qquad  [\fk,\fp] \subseteq \fp
\eeq

   \indent   This complex decomposition is preserved by the graded real structure  $ \sigma $  and then we shall call it the  {\it Cartan decomposition\/}  of the graded real form  $ (\fg,\sigma) \, $.  Notice that, by  Lemmas \ref{parker1} and \ref{cf-1},  the Cartan automorphism  $ \theta $  and the corresponding Cartan decomposition are unique up to inner automorphism.

\medskip

   We now turn to examine the standard case  (see \cite{chuah-mz}).  When  $ \fg $  is of type 2, the lack of compact forms  (see Theorem \ref{cpt-form-st})  makes the case  $ \, \fk = \fk_\zero \, $ studied in  \cite{chuah-mz, cfv}  most relevant.  We invite the reader to consult those references for more details.  So we focus on the case when  $ \fg $  is of type 1.
                                                         \par
   Let  $ \, \theta \in \aut_{2,4}(\fg) \, $  be an equal rank automorphism.  Let  $ \fk_\zero \, $, $ \fp_\zero $  be the eigenspaces of eigenvalues  $ \pm 1 $  for  $ \theta_\zero \, $,  let $ \Delta^k_\zero $  the root system of the semisimple part of  $ \fk_\zero \, $.  Choose a distinguished simple system, that is one with only one odd root  $ \be \, $.  Define
  $$  \fk  \; := \;  \fk_\zero \oplus {\textstyle \sum\limits_{\alpha \in \Delta^k_\zero}} \fg_{\pm(\beta+\alpha)} \;\; , \qquad   \fp  \; := \;  \fp_\zero \oplus {\textstyle \sum\limits_{\alpha \not\in \Delta^k_\zero}} \fg_{\pm(\beta+\alpha)}  $$
 Then, most immediately we have the decomposition as above:
\beq  \label{gr-cartan-dec}
  \fg  \; = \;  \fk \oplus \fp  \;\; ,  \qquad  [\fk,\fk] \subseteq \fk \; ,  \qquad  [\fk,\fp] \subseteq \fp
\eeq

   \indent   An easy check shows that it is preserved by the standard real structure  $ \sigma $ associated with  $ \theta \, $,  hence we call it the  {\it Cartan decomposition\/}  of the standard real form  $ (\fg,\sigma) \, $.  As before, we notice that by  Lemmas \ref{parker1} and \ref{cf-1}  the Cartan automorphism  $ \theta $  and the corresponding Cartan decomposition are unique up to inner automorphism.

\medskip

\section{Real forms of basic supergroups}

 In this section, we shall provide a global version of the infinitesimal real forms constructed in the previous sections.

\subsection{Unitary supergroups}  \label{sbsec: unitary-spgrps}

 Let  $ \, (V,\phi) \in \smod_\C^\bullet \, $  be a complex super vector space with (standard or graded) real structure, and  $ B $  a consistent, non-degenerate, positive definite super Hermitian
form on it.  Proposition \ref{adjoint-prop}  provides a real structure  $ \; \circledast : \cL_{\fgl(V)} \!\relbar\joinrel\relbar\joinrel\lra \! \cL_{\fgl(V)} \; $  on  $ \cL_{\fgl(V)} \, $,  which corresponds to a real structure on the Lie superalgebra  $ \fgl(V) \, $.  By  Proposition \ref{prop:eq_defs-rst_sgrps-sHCp's},  there exists a unique real structure
  $$  \circledast^{\scriptscriptstyle \bG} : \bGL^\bullet(V) \relbar\joinrel\relbar\joinrel\relbar\joinrel\longrightarrow \overline{\bGL^\bullet}(V)  $$
on the supergroup  $ \bGL(V) $ corresponding to it.  In particular, on an element  $ \, g = g_+ \cdot \exp(\cY) \in \big(\bGL(V)\big)(A) \, $   --- as in  \eqref{eq: g(elem)-descr_via-sHCp}  ---   using the exponential notation, we have
\begin{equation}  \label{eq: real-str_sgrp-element}
  g^{\circledast^{\scriptscriptstyle \bG}}  \; = \;\;  g_+^{\circledast^{\scriptscriptstyle G}_\zero} \!\cdot \exp\!\big(\, {\cY}^{\,\circledast_{\!A}} \big)
\end{equation}
where  $ \circledast^{\scriptscriptstyle G}_\zero $  is the ordinary real structure  on $ \, \rGL(V_\zero) \times \rGL(V_\uno) \, $,  namely
  $$  g_+^{\circledast^G_\zero}  \; = \;\,
   \begin{pmatrix}
      \, a  &  0 \;\,  \\
      \, 0  &  d \;\,
   \end{pmatrix}^{\!\circledast^G_\zero}  \; = \;\,
  \left(\! \begin{pmatrix}
        \, a  &  0 \;\,  \\
        \, 0  &  d \;\,
           \end{pmatrix}^{\!\star} \;\right)^{\!\!-1}  \; = \;\,
%
%
    \Bigg( \begin{matrix}
       {\big( a^{-1} \big)}^{\!\star}  &  \!\!\!\!\!\!\! 0 \,  \\
          0  &  \!\!\!\!\! {\big( d^{-1} \big)}^{\!\star} \,
            \end{matrix} \Bigg)  $$
while, by  Proposition \ref{adjoint-prop}  and  Lemma \ref{adjoint-lemma},
  $$  {\cY}^{\,\circledast_{\!A}}  \; = \;  \left(\, \sum\limits_{n=1}^{+\infty}\, {{\;{(-1)}^{n-1}\;} \over {\;n\;}} \, \cX^{\,n} \,\right)^{\!\circledast}  \, = \;
%
%
 \varepsilon \, \sum\limits_{n=1}^{+\infty}\, {{\;{(-1)}^{n-1}\;} \over {\;n\;}} \, {\big( \cX^{\,\star} \big)}^n  \, = \;  \varepsilon \, \log\big(\,\mathbf{1}+\cX^{\,\star}\,\big)  $$
 --- where  $ \, \varepsilon := i \, $  or  $ \, \varepsilon := -1 \, $  according to whether we are in the standard or the graded case.  So
  $$  \exp\big( {\cY}^{\,\circledast_{\!A}} \big) = \, \exp\big(\, \varepsilon \, \big( \log\big( \mathbf{1} + \cX^\star \big) \big)  \; = \;  {\big( \mathbf{1} + \cX^{\,\star} \,\big)}^\varepsilon  \; = \;  {\textstyle\sum\limits_{n=0}^N} \, \bigg({\varepsilon \atop n}\bigg) {\big(\cX^{\,\star}\big)}^n  $$
 where  $ N $  is the least non-negative integer such that  $ \; {\big(\cX^\star\big)}^{N+1} = \, 0 \, \in \, \big(\text{\sl End}(V)\big)(A) \; $.  Therefore
\begin{equation}  \label{eq: g = block matrix - explicit}
  g^{\circledast^\bG}  = \;  \begin{pmatrix}
             \, a  & \beta \;\,  \\
         \, \gamma &   d \;\,
              \end{pmatrix}^{\!\circledast^\bG}
 \!\! = \;\,  \Bigg( \begin{matrix}
          {\big( a^\star \big)}^{\!-1}  &  \!\!\!\!\!\!\! 0 \,  \\
             0  &  \!\!\!\!\! {\big( d^\star \big)}^{\!-1} \,
                     \end{matrix} \Bigg)
   \cdot  \left( \begin{pmatrix}
        \, 1  &  \!\! a^{-1} \beta \;\,  \\
    \, d^{-1} \gamma \!\!  &   1 \;\,
                 \end{pmatrix}^{\!\!\star} \;\right)^{\!\varepsilon}
\end{equation}
 {\sl Note\/}  in addition that the  {\sl graded\/}  case   --- when  $ \, \varepsilon = -1 \, $  ---   also reads
\begin{equation}  \label{eq: g = block matrix - GRAD-explicit}
  g^{\circledast^\bG}  \, = \;\,  \begin{pmatrix}
             \, a  & \beta \;\,  \\
         \, \gamma &   d \;\,
              \end{pmatrix}^{\!\circledast^\bG}
    \!\! = \;  \left( \begin{pmatrix}
           \, a  &  \beta \;\,  \\
       \, \gamma  &   d \;\,
                    \end{pmatrix}^{\!\!\star} \,\right)^{\!-1}
       \!\! = \;  \left( \begin{pmatrix}
           \, a  &  \beta \;\,  \\
       \, \gamma  &   d \;\,
                    \end{pmatrix}^{\!\!-1} \,\right)^{\!\star}   \quad \qquad  \text{({\sl graded\/}  case)}
\end{equation}

\smallskip

\begin{definition}  \label{def: unit_grps}
 We define the  {\it unitary supergroup\/}  $ \, \bU_\cB(V) \, $,  with respect to the super Hermitian form $ \cB \, $,  as the real form of  $ \bGL(V) $  corresponding to the real structure $ \circledast^{\scriptscriptstyle \bG} \, $.  Explicitly, it is
  $$  \bU_\cB(V)(A)  \,\; := \;\,  \Big\{\; g \in \big(\bGL(V)\big)(A) \;\Big|\; g^{\circledast^\bG} = \, g \;\Big\}   \eqno \forall \;\; A \in \salg_\C^\bullet  \qquad  $$
\end{definition}
   {\sl N.B.:}\,  It follows at once from  Observation \ref{obs: even-part_unitary}  that the even part of a unitary supergroup is the direct product of two ordinary unitary groups.

\smallskip

\begin{examples}
   {\it (a)}\,  Let  $ \, V := \C^{1|1} \, $  with the  {\sl standard\/}  real structure given in \S \ref{sbsbsec: Stand_case}.  Then the associated standard real structure  $ \, \circledast^{\scriptscriptstyle G} \, $  on the supergroup  $ \, \bGL(V) = \bGL_{1|1} \, $  is given explicitly as follows
(see also \cite{fi1}):
  $$  \begin{pmatrix}
             \, a  & \beta \;\,  \\
         \, \gamma &   d \;\,
              \end{pmatrix}^{\!\circledast^\bG}
 \, = \;\;
   \begin{pmatrix}
      \; \widetilde{a}^{-1} \big(\, 1 + \widetilde{a}^{-1} \widetilde{\beta} \, \widetilde{d}^{-1} \, \widetilde{\gamma} \,\big)   &   \mp \, i \, \widetilde{a}^{-1} \widetilde{d}^{-1} \, \widetilde{\gamma} \;\,  \\
      \; \mp \, i \, \widetilde{d}^{-1} \, \widetilde{a}^{-1} \widetilde{\beta}   & \widetilde{d}^{-1} \big(\, 1 + \widetilde{d}^{-1} \, \widetilde{\gamma} \, \widetilde{a}^{-1} \widetilde{\beta} \,\big) \;
   \end{pmatrix}  $$
 \vskip5pt
   {\it (b)}\,  Let  $ \, V := \C^{m|n} \, $  with the  {\sl graded\/}  real structure given in \S \ref{sbsbsec: Grad_case}.  Then the associated graded real structure  $ \, \circledast^{\scriptscriptstyle G} \, $  on the supergroup  $ \, \bGL(V) = \bGL_{m|n} \, $  is given explicitly as follows:
  $$  \begin{pmatrix}
             \, a  & \beta \;\,  \\
         \, \gamma &   d \;\,
              \end{pmatrix}^{\!\circledast^\bG}
 \, = \;\;
   \begin{pmatrix}
      \; \widetilde{a}^{\,t}   &   \pm \, \widetilde{\gamma}^{\,t} \;\,  \\
      \; \mp \, \widetilde{\beta}^{\,t}   &   \widetilde{d}^{\,t} \;\,
   \end{pmatrix}^{\!-1}  $$
\end{examples}

\medskip

\subsection{Compact real forms of supergroups}

 Our notion of compact supergroup will be modelled on the one of Lie superalgebras  (cf.\  Definition \ref{cpt-def}),  therefore, it is stronger than the one commonly seen in the literature, which amounts to
``topological compactness'' only (see \cite{cfv, fi1}).

\smallskip

\begin{definition}  \label{cpt-def_sgrps}
 Let  $ \bG $  be a complex Lie supergroups with a real structure  $ \Phi $  on it, and let  $ \bG^\Phi $   --- see  Definition \ref{def:re-form_sgrps_1}  ---   be the associated real form.
 \vskip3pt
    {\it (a)}\;  We say that  $ \bG^\Phi $  is  {\sl super-compact\/}  if there exists a suitable superspace  $ V $  with a non-degenerate, positive definite, consistent Hermitian form  $ \cB $
such that  $ \; \bG^\Phi \leq \bU_\cB(V) \; $  (see Definition \ref{def: unit_grps}).
 \vskip3pt
    {\it (b)}\;  We say that  $ \bG^\Phi $  is  {\sl compact\/}  if its even part  $ {\big( \bG^\Phi \big)}_\zero $  is compact in the classical sense.
 \vskip3pt
    {\it (c)}\;  We say that a (graded or standard) real structure  $ \Phi $  on  $ \bG $  is  {\sl super-compact},  resp.\ is  {\sl compact},  if the associated real form  $ \bG^\Phi $  is super-compact, resp.\ is compact.
 \vskip3pt
    {\it N.B.:}\;  it is immediate to see that super-compactness implies compactness.
\end{definition}

   Let  $ \bG $  be a complex supergroup, with tangent Lie superalgebra  $ \, \fg := \Lie(\bG) \, $.  We say that  $ \bG $  is  {\sl basic\/}  if  $ \fg $  is simple of basic type.
 \vskip3pt
   Now assume that a complex supergroup  $ \bG $  is  {\sl connected and simply connected}.  Then, it is clear by  \S \ref{subsec: real-structs-sgrps}  that any real structure on  $ \fg $  integrates to a real structure (of the same order) on  $ \bG \, $.  In particular, if  $ \bG $  is also basic, we have the following, direct consequence of  Theorem \ref{cpt-form-gr}:

\smallskip

\begin{theorem}  \label{cpt-form-gr_sgrps}
 Let  $ \bG $  be a connected, simply connected, basic, complex supergroup, with  $ \, a \in \R \, $  if  $ \bG $  is of type  $ D(2,1;a) \, $.  Then  $ \bG $  admits a  {\sl graded},  {\sl super-compact}  real structure\/  $ \Omega $   --- hence a  {\sl graded},  {\sl super-compact\/}  real form  $ \bG^\Omega $  ---   which is unique up to inner automorphisms, whose associated real structure on  $ \, \fg := \Lie(\bG) \, $  is the real structure  $ \omega $  of  Theorem \ref{cpt-form-gr}.
\end{theorem}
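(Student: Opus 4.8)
The plan is to integrate the graded real structure $\omega$ of Theorem \ref{cpt-form-gr} up to $\bG$ and then to transport both super-compactness and uniqueness from the infinitesimal level, exploiting throughout that $\bG$ is connected and simply connected.

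First I would construct $\Omega$. By Theorem \ref{cpt-form-gr} the Lie superalgebra $\fg = \Lie(\bG)$ carries a graded, super-compact real structure $\omega \in \baut_{2,4}(\fg)$, unique up to inner automorphisms; its restriction $\omega|_{\fg_\zero}$ is precisely the real structure cutting out the compact form of $\fg_\zero = \Lie(G_+)$. Since $G_+$ is connected and simply connected, classical Lie theory integrates $\omega|_{\fg_\zero}$ to a unique real structure $\Omega_+$ on $G_+$ with $\Lie(\Omega_+) = \omega|_{\fg_\zero}$, so that the pair $(\Omega_+, \omega)$ fulfils conditions (a)--(c) of Definition \ref{def:sr-str_sHCp} — (a), (b) by construction and (c) by the choice of $\Omega_+$ — and is a graded sHCp real structure on $(G_+, \fg)$. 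Through the equivalence of Proposition \ref{prop:eq_defs-rst_sgrps-sHCp's}, i.e. the integration principle recorded just before this statement, it corresponds to a real structure $\Omega$ on $\bG$, graded because $\omega$ is, with $\Lie(\Omega) = \omega$ as required.

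Next I would prove super-compactness. In the proof of Theorem \ref{cpt-form-gr} the embedding $\cL_\fg^\omega \subseteq \fu_\cB(\fg)$ is realised on $V = \fg$ with the positive definite, consistent Hermitian form $\cB$ built from the non-degenerate Cartan--Killing form $\kappa$ and from $\omega$, using the $\ad$-invariance of $\kappa$. I would integrate this: since $\bG$ is connected and simply connected, $\ad$ integrates to the adjoint morphism $\mathrm{Ad}\colon \bG \to \bGL(\fg)$, and the same invariance of $\kappa$ forces $\mathrm{Ad}$ to carry the real form $\bG^\Omega$ into the unitary supergroup $\bU_\cB(\fg)$ of Definition \ref{def: unit_grps}, giving $\bG^\Omega \le \bU_\cB(\fg)$ and hence super-compactness in the sense of Definition \ref{cpt-def_sgrps}. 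Compactness of the even part is then visible directly: by Theorem \ref{real-form_G} one has $\bG^\Omega \cong G_+^{\Omega_+} \times \mathbb{A}_{\gr,\C}^{0|d_1}$, where $G_+^{\Omega_+}$ is the compact real form of $G_+$. For uniqueness, suppose $\Omega'$ is another graded, super-compact real structure on $\bG$; then $\omega' := \Lie(\Omega')$ is a graded, super-compact real structure on $\fg$, so by Theorem \ref{cpt-form-gr} we have $\omega' = \psi \circ \omega \circ \psi^{-1}$ for an inner automorphism $\psi = \exp(\ad(n))$, $n \in \fg_\zero$. This $\psi$ integrates, on the connected simply connected $\bG$, to an inner automorphism $\Psi$; then $\Psi \circ \Omega \circ \Psi^{-1}$ and $\Omega'$ share the differential $\omega'$ and the restriction to $G_+$, hence coincide by connectedness, so $\Omega$ is unique up to inner automorphisms.

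The hard part will be the super-compactness step, specifically realising $\bG^\Omega$ as a genuine subgroup rather than merely a homomorphic image: the adjoint $\mathrm{Ad}$ need not be injective when the simply connected $G_+$ has non-trivial centre, so to secure the relation $\bG^\Omega \le \bU_\cB(V)$ literally I would replace the adjoint module $\fg$ by a faithful representation of the basic superalgebra $\fg$ — e.g. its standard/defining module — equipped with the positive definite super Hermitian form adapted to $\omega$, and then verify that $\bG^\Omega$ sits inside the corresponding $\bU_\cB(V)$ as a closed subsupergroup. All remaining verifications reduce, via connectedness and simple-connectedness, to the infinitesimal facts already established in Theorem \ref{cpt-form-gr}.
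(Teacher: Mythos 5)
Your proposal is correct and follows essentially the same route the paper intends: the paper states this theorem as a ``direct consequence'' of Theorem \ref{cpt-form-gr}, relying on the observation (made just before the statement) that for a connected, simply connected $\bG$ every real structure on $\fg$ integrates via the sHCp equivalence, after which super-compactness and uniqueness are transported from the infinitesimal level exactly as you do. Your closing caveat about the non-injectivity of $\mathrm{Ad}$ when realising $\bG^\Omega$ inside a unitary supergroup is a legitimate subtlety that the paper passes over in silence, and your proposed remedy (a faithful module carrying an $\omega$-adapted positive definite super Hermitian form) is the natural fix.
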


\smallskip

   Similarly, we have the following, straightforward consequence of  Theorem \ref{cpt-form-st}:

\smallskip

\begin{theorem}  \label{cpt-form-st_sgrps}
 Let  $ \bG $  be a connected, simply connected, basic, complex supergroup.  Then:
 \vskip3pt
   {\it (a)}\;  if\/  $ \Lie(\bG) $  is of type 1 (i.e., of type  $ A $  or  $ C \, $),  then  $ \bG $  admits a {\sl standard},  {\sl compact}  real structure, which is unique up to inner automorphisms;
%
%
 \vskip2pt
   {\it (b)}\;  if\/  $ \Lie(\bG) $  is of type 2 (i.e., of type  $ B $,  $ D $,  $ F $  or  $ G \, $),  then  $ \bG $  has no  {\sl standard},  {\sl compact}  real structure.
\end{theorem}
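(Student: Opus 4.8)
The plan is to reduce the statement to its infinitesimal counterpart, Theorem \ref{cpt-form-st}, by integrating real structures from the tangent Lie superalgebra $\fg := \Lie(\bG)$ up to $\bG$, and then matching the two notions of compactness. Throughout I would exploit the standing hypothesis that $\bG$ is connected and simply connected, which is precisely what makes the passage between $\bG$ and $\fg$ reversible.

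First I would record the integration dictionary. As noted just above Theorem \ref{cpt-form-gr_sgrps}, for connected simply connected $\bG$ the discussion of \S \ref{subsec: real-structs-sgrps}---and in particular the correspondence of Proposition \ref{prop:eq_defs-rst_sgrps-sHCp's} between real structures on $\bG$ and real structures on the sHCp $(G_+,\fg)$---shows that every standard (resp.\ graded) real structure $\sigma$ on $\fg$ integrates to a standard (resp.\ graded) real structure $\Sigma$ on $\bG$, and that $\sigma \mapsto \Sigma$ is a bijection intertwining the actions of inner automorphisms on either side (inner automorphisms of $\bG$ being determined, under connectedness and simple-connectedness, by those of $\fg$). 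This already reduces both the existence/non-existence assertions and the uniqueness-up-to-inner-automorphisms assertion to the corresponding statements for $\sigma$ on $\fg$, \emph{provided} compactness matches up across the dictionary.

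The matching of compactness is the one substantive step. By Theorem \ref{real-form_G} the real form $\bG^\Sigma$ factors as $G_+^{\Sigma_+}\!\times\A_{\st,\C}^{0|d_1}$, so its even part $(\bG^\Sigma)_\zero$ is precisely the ordinary real form $G_+^{\Sigma_+}$ of $G_+$ attached to $\Sigma_+$, a connected group whose Lie algebra is $(\fg_\zero)^{\sigma_\zero} = (\fg^\sigma)_\zero$. Invoking the classical correspondence between compact real forms of $\fg_\zero$ and compact real Lie-group forms of $G_+$---where the connected, simply connected hypothesis is what lets one pass between the group and its Lie algebra---I would conclude that $(\bG^\Sigma)_\zero$ is compact in the classical sense exactly when $(\fg_\zero)^{\sigma_\zero}$ is. In the language of Definitions \ref{cpt-def} and \ref{cpt-def_sgrps}, this says that $\Sigma$ is a compact real structure on $\bG$ if and only if $\sigma$ is a compact real structure on $\fg$.

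With the dictionary and the compactness bridge in hand, both parts follow at once from Theorem \ref{cpt-form-st}. For (a), if $\fg$ is of type 1 that theorem supplies a standard compact real structure $\sigma$ on $\fg$, unique up to inner automorphisms; integrating yields a standard compact real structure $\Sigma$ on $\bG$, whose uniqueness up to inner automorphisms is transported back through the bijection. For (b), if $\fg$ is of type 2 and $\bG$ carried a standard compact real structure $\Sigma$, then $\sigma := \Lie(\Sigma)$ would be a standard compact real structure on $\fg$, contradicting Theorem \ref{cpt-form-st}(b). The main obstacle is exactly the compactness bridge: one must argue carefully that classical compactness of the even supergroup $(\bG^\Sigma)_\zero$ is equivalent to classical compactness of the even real Lie algebra $(\fg_\zero)^{\sigma_\zero}$, a point where the connected, simply connected hypothesis on $\bG$ is indispensable and where the behaviour of any central (toral) directions must be handled through the classical theory.
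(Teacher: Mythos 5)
Your proposal follows exactly the route the paper takes: the paper offers no separate argument beyond the remark preceding Theorem \ref{cpt-form-gr_sgrps} that, for $\bG$ connected and simply connected, real structures on $\fg$ integrate to real structures on $\bG$, and then declares the theorem a ``straightforward consequence'' of Theorem \ref{cpt-form-st}. Your write-up is correct and in fact supplies more detail than the paper does, in particular by making explicit (via Theorem \ref{real-form_G}) the compactness bridge between $(\bG^\Sigma)_\zero$ and $(\fg_\zero)^{\sigma_\zero}$ that the paper leaves implicit.
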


\smallskip

\begin{observation}
 We can also immediately construct the real forms associated with the real structures  $ (\fg,\sigma) $  of Sec.\ \ref{cartan-dec}.  It is not difficult to see that we have the standard and graded global Cartan decompositions associated to the Cartan decompositions  \eqref{gr-cartan-dec}  and \eqref{st-cartan-dec},  that is
  $$  \bG  \; \cong \;  \mathbf{K} \cdot \mathbf{P}  $$
 where  $ \bK $  is the supergroup associated with the superalgebra  $ \, \fk = \fg^\sigma \, $  and  $ \; \bP \, \cong \, \bP_\zero \times \A^{0|d_1}_{\bullet} \; $,  while  $ \bP_\zero $  is the space appearing in the ordinary global Cartan decomposition  (see \cite{kn}, Ch.\ VI).  Clearly on  $ \bG $ we have the real structure induced by  $ \sigma \, $,  which restricts also to  $ \bK $  and to $ \bP \, $.
\end{observation}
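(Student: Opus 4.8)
The plan is to globalize the infinitesimal splitting $\, \fg = \fk \oplus \fp \,$ of \eqref{st-cartan-dec} (resp.\ of \eqref{gr-cartan-dec}) to the supergroup level, and then to read off the real form by taking fixed points. Since the equal–rank automorphism $\theta$ preserves parity, the decomposition respects the $\Z_2$–grading (explicitly through the root spaces in the standard case, and by $\theta$–invariance in the graded case), so that $\, \fg_\zero = \fk_\zero \oplus \fp_\zero \,$ and $\, \fg_\uno = \fk_\uno \oplus \fp_\uno \,$. I would let $\bK$ be the sub–supergroup of $\bG$ attached, through the sHCp equivalence, to the pair $\, (K_+, \fk) \,$, where $\, K_+ \subseteq G_+ \,$ is the connected subgroup with $\, \Lie(K_+) = \fk_\zero \,$; and I would define $\bP$ to be the sub–supermanifold of $\bG$ exponentiating $\fp$, i.e.\ the piece cut out by $\fp$ inside the global coordinates of $\bG$ furnished by \eqref{eq: G-descr_via-sHCp}. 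The assertion $\, \bG \cong \bK \cdot \bP \,$ then means precisely that the multiplication morphism $\, m : \bK \times \bP \to \bG \,$ is an isomorphism of supermanifolds.

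First I would dispatch the even layer by the classical global Cartan decomposition (\cite{kn}, Ch.\ VI): the map $\, K_+ \times \fp_\zero \to G_+ \,$, $\, (k,X) \mapsto k \cdot \exp(X) \,$, is a diffeomorphism, compatibly with $\Phi_+$, so on reduced spaces $m$ recovers $\, G_+ \cong K_+ \cdot P_\zero \,$ with $\, P_\zero = \exp(\fp_\zero) \,$ and the factorization is unique on $A_\zero$–points. For the odd directions I would use \eqref{eq: G-descr_via-sHCp}: every $\, g \in \bG(A) \,$ is uniquely $\, g = g_+ \cdot \exp(\cY) \,$ with $\, g_+ \in G_+(A_\zero) \,$ and $\, \cY \in A_\uno \otimes_\C \fg_\uno \,$. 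Writing $\, \cY = \cY_\fk + \cY_\fp \,$ along $\, \fg_\uno = \fk_\uno \oplus \fp_\uno \,$ and invoking $\, [\fk,\fk] \subseteq \fk \,$ and $\, [\fk,\fp] \subseteq \fp \,$ together with the Baker--Campbell--Hausdorff identity (which terminates because the odd factors are nilpotent), I would rearrange $g$ uniquely into a product $\, k \cdot p \,$ with $\, k \in \bK(A) \,$ and $\, p \in \bP(A) \,$, the even uniqueness above propagating at each step. This makes $m$ a natural bijection on $A$–points; representability of all the functors involved (Theorem \ref{real-form_G} and \S\ref{sec_affine-real-spaces}) then promotes it to an isomorphism of supermanifolds.

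The delicate point is that $\fp$ is not a subalgebra, so $\bP$ is only a sub–supermanifold and $\, \bG \cong \bK \cdot \bP \,$ is a Cartan–type splitting, not a factorization into subgroups; the Baker--Campbell--Hausdorff rearrangement in the odd variables therefore produces cross terms mixing $\fk$ and $\fp$, and the whole argument hinges on the invariance $\, [\fk,\fp] \subseteq \fp \,$ to absorb them while keeping the factorization unique. The cleanest way to make this rigorous is to observe that $m$ is an isomorphism on reduced spaces (the classical decomposition) and that its differential is exactly the identification $\, \fg = \fk \oplus \fp \,$; by the standard criterion (the super inverse function theorem) for a morphism of supermanifolds to be an isomorphism, $m$ is then an isomorphism. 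Finally, because $\sigma$ preserves both $\fk$ and $\fp$, the real structure $\Sigma$ on $\bG$ integrating $\sigma$ restricts to real structures on $\bK$ and on $\bP$; taking fixed points yields the standard and graded global Cartan decompositions of the real form, and applying Theorem \ref{real-form_G} to $\bP$ gives the factor $\, \bP \cong \bP_\zero \times \A^{0|d_1}_\bullet \,$, with $\, \bP_\zero \,$ the space of the ordinary decomposition and $d_1$ the odd dimension of the $\fp$–factor, exactly as claimed.
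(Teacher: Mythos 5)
The paper offers no proof of this Observation at all --- it is asserted with ``it is not difficult to see'' --- so there is nothing to compare your argument against except the toolkit the authors clearly have in mind. Your proposal assembles exactly that toolkit: the sHCp factorization \eqref{eq: G-descr_via-sHCp}, the classical global Cartan decomposition of \cite{kn} on the reduced group, the purely odd affine superspaces of \S\ \ref{sec_affine-real-spaces}, and the reduced-space-plus-differential criterion for a morphism of supermanifolds to be an isomorphism. This is the natural (and, as far as one can tell, the intended) completion of the authors' assertion, and your explicit acknowledgement that $\fp$ is not a subalgebra, so that $\bG \cong \bK\cdot\bP$ is a splitting of supermanifolds rather than of groups, is precisely the point that needed to be made.

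Two details deserve tightening. First, Theorem \ref{real-form_G} is a statement about supergroups with real structure, and $\bP$ is not a subgroup; so the last step should not cite that theorem literally but rather repeat its mechanism (the description of the fixed points of $\widetilde{(-)}\otimes\phi$ on $A_\uno\otimes_\C\fp_\uno$ via \S\ \ref{sec_affine-real-spaces}), which is what actually yields $\bP\cong\bP_\zero\times\A^{0|d_1}_\bullet$ with $d_1=\dim_\C(\fp_\uno)$ --- and it is worth saying explicitly that the symbol $d_1$ here is \emph{not} the $\dim(\fg_\uno)$ of Theorem \ref{real-form_G}. Second, your Baker--Campbell--Hausdorff rearrangement silently uses that the cross terms $[\cY_\fp,[\cY_\fp,\cY_\fk]]$ and their iterates stay on the correct side, which requires $[\fp,\fp]\subseteq\fk$; this holds for a genuine Cartan decomposition but is not among the bracket relations displayed in \eqref{st-cartan-dec} and \eqref{gr-cartan-dec}, so it should either be verified from the definition of $\fk$ and $\fp$ as eigenspaces of $\theta$ or, better, bypassed entirely by your second route (isomorphism on reduced spaces plus bijective differential coming from $\fg=\fk\oplus\fp$), which is self-contained and is the version I would keep.
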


\bigskip
 \bigskip

\end{document}